\numberwithin{equation}{section}
\theoremstyle{plain}
\newtheorem{thm}{Theorem}[section]
\newtheorem{prop}[thm]{Proposition}
\newtheorem{cor}[thm]{Corollary}
\newtheorem{defn}[thm]{Definition}
\newtheorem{rem}[thm]{Remark}
\def\R{\mathbb{R}^{N}}
\def\G{\mathbb{G}}
\begin{document}

 \title[Stein-Weiss-Adams inequality on Morrey spaces]
{Stein-Weiss-Adams inequality on Morrey spaces}

\author[A. Kassymov]{Aidyn Kassymov}
\address{
  Aidyn Kassymov:
  \endgraf
   \endgraf
  Department of Mathematics: Analysis, Logic and Discrete Mathematics
  \endgraf
  Ghent University, Belgium
  \endgraf
	and
\endgraf
  Institute of Mathematics and Mathematical Modeling
  \endgraf
  125 Pushkin str.
  \endgraf
  050010 Almaty
  \endgraf
  Kazakhstan
  \endgraf
  {\it E-mail address} {\rm aidyn.kassymov@ugent.be} and {\rm kassymov@math.kz}}

\author[M. A. Ragusa]{Maria Alessandra Ragusa}
\address{
	Maria Alessandra Ragusa:
	 \endgraf
  Dipartimento di Matematica e Informatica, Universit\'{a} di Catania 
  \endgraf
  Catania, Italy
  \endgraf
  and
  \endgraf
  Faculty of Fundamental Science, Industrial University
    \endgraf
    Ho Chi Minh City, Vietnam
	\endgraf
  {\it E-mail address} {\rm maragusa@dmi.unict.it}
}
  \author[M. Ruzhansky]{Michael Ruzhansky}
\address{
	Michael Ruzhansky:
	 \endgraf
  Department of Mathematics: Analysis, Logic and Discrete Mathematics
  \endgraf
  Ghent University, Belgium
  \endgraf
  and
  \endgraf
  School of Mathematical Sciences
    \endgraf
    Queen Mary University of London
  \endgraf
  United Kingdom
	\endgraf
  {\it E-mail address} {\rm michael.ruzhansky@ugent.be}
}

\author[D. Suragan]{Durvudkhan Suragan}
\address{
	Durvudkhan Suragan:
	\endgraf
	Department of Mathematics, Nazarbayev University
	\endgraf
	53 Kabanbay Batyr Ave,  010000 Astana
	\endgraf
	Kazakhstan
	\endgraf
	{\it E-mail address} {\rm durvudkhan.suragan@nu.edu.kz}}


    \keywords{Riesz potential, Stein-Weiss inequality, fractional operator, global Morrey space,
     	 homogeneous Lie group.}
 \subjclass{22E30, 43A80.}
     \maketitle

     \begin{abstract}
We establish Adams type Stein-Weiss inequality on global Morrey spaces on general homogeneous groups.  Special properties of homogeneous norms and some boundedness results on global Morrey spaces play key roles in our proofs.  As consequence, we obtain fractional Hardy, Hardy-Sobolev, Rellich and Gagliardo-Nirenberg inequalities on Morrey spaces on stratified groups. While the results are obtained in the setting of general homogeneous groups, they are new already for the Euclidean space $\mathbb{R}^{N}.$
     \end{abstract}
\tableofcontents
\section{Introduction}

In their pioneering work \cite{HL28},  Hardy and  Littlewood considered the one dimensional fractional integral operator on $(0,\infty)$ given by
\begin{equation}\label{1Doper}
T_{\lambda}u(x)=\int_{0}^{\infty}\frac{u(y)}{|x-y|^{\lambda}}dy,\quad 0<\lambda<1,
\end{equation}
and proved the following inequality:
\begin{thm}\label{1DHLS28}
Let $1<p<q<\infty$ and $u\in L^{p}(0,\infty)$  with $\frac{1}{q}=\frac{1}{p}+\lambda-1$. Then
\begin{equation}
\|T_{\lambda}u\|_{L^{q}(0,\infty)}\leq C \|u\|_{L^{p}(0,\infty)},
\end{equation}
where $C$ is a positive constant independent of $u$.
\end{thm}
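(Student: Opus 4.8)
The plan is to reduce the estimate to the $L^{p}$-boundedness of the one-dimensional Hardy--Littlewood maximal operator by first establishing a pointwise (Hedberg-type) bound for $T_{\lambda}u$. After extending $u$ by zero outside $(0,\infty)$, so that $T_{\lambda}u(x)=\int_{\mathbb{R}}|x-y|^{-\lambda}u(y)\,dy$, I would fix a radius $R>0$ (to be optimized) and split the integral into a near part $I_{1}=\int_{|x-y|<R}$ and a far part $I_{2}=\int_{|x-y|\geq R}$. For $I_{1}$, a dyadic decomposition of the annuli $\{2^{-k-1}R\leq|x-y|<2^{-k}R\}$ bounds each piece by $(2^{-k-1}R)^{-\lambda}$ times the integral of $|u|$ over the interval of half-length $2^{-k}R$ centred at $x$; estimating that integral by $C\,2^{-k}R\,Mu(x)$, where $Mu$ is the Hardy--Littlewood maximal function, and summing the geometric series (which converges precisely because $\lambda<1$) yields $I_{1}\leq C R^{1-\lambda}Mu(x)$.

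For the far part I would apply H\"older's inequality with exponents $p$ and $p'$, reducing matters to the convergence of $\int_{|z|\geq R}|z|^{-\lambda p'}\,dz$. This integral is finite exactly when $\lambda p'>1$, and here the hypotheses enter decisively: the scaling relation $\frac{1}{q}=\frac{1}{p}+\lambda-1$ gives $1-\lambda=\frac{1}{p}-\frac{1}{q}<\frac{1}{p}$, which is equivalent to $\lambda>1-\frac{1}{p}=\frac{1}{p'}$, i.e. $\lambda p'>1$. Evaluating the integral and tracking the exponent $\frac{1}{p'}-\lambda=(1-\lambda)-\frac{1}{p}=-\frac{1}{q}$ produces $I_{2}\leq C R^{-1/q}\|u\|_{L^{p}(0,\infty)}$. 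Combining the two estimates gives, for every $R>0$,
\begin{equation*}
T_{\lambda}u(x)\leq C\bigl(R^{1-\lambda}Mu(x)+R^{-1/q}\|u\|_{L^{p}(0,\infty)}\bigr).
\end{equation*}

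Optimizing over $R$ is clean because the balancing relation satisfies $1-\lambda+\frac{1}{q}=\frac{1}{p}$: choosing $R=(\|u\|_{L^{p}(0,\infty)}/Mu(x))^{p}$ equalizes the two terms and yields the Hedberg inequality
\begin{equation*}
T_{\lambda}u(x)\leq C\,\|u\|_{L^{p}(0,\infty)}^{1-p/q}\bigl(Mu(x)\bigr)^{p/q}.
\end{equation*}
Raising to the power $q$, integrating in $x$, and invoking the maximal inequality $\|Mu\|_{L^{p}}\leq C\|u\|_{L^{p}}$ (valid since $p>1$) then gives
\begin{equation*}
\|T_{\lambda}u\|_{L^{q}(0,\infty)}\leq C\,\|u\|_{L^{p}(0,\infty)}^{1-p/q}\|Mu\|_{L^{p}(0,\infty)}^{p/q}\leq C\,\|u\|_{L^{p}(0,\infty)},
\end{equation*}
which is the claim. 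The two delicate points, both governed exactly by the hypotheses, are the convergence condition $\lambda p'>1$ for the far part (which is where $p<q$ is used) and the failure of the maximal inequality at $p=1$ (which is why the restriction $1<p$ is essential); away from these the argument is routine.
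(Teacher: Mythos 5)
Your argument is correct: the near/far splitting, the dyadic estimate $I_{1}\leq CR^{1-\lambda}Mu(x)$, the H\"older bound $I_{2}\leq CR^{-1/q}\|u\|_{L^{p}}$, the balancing choice $R=(\|u\|_{L^{p}}/Mu(x))^{p}$ (clean because $1-\lambda+\frac{1}{q}=\frac{1}{p}$), and the final appeal to the maximal theorem all check out, modulo the trivial cases $Mu(x)\in\{0,\infty\}$. Note, however, that the paper does not prove this statement at all: Theorem \ref{1DHLS28} is quoted as the classical Hardy--Littlewood result from \cite{HL28}, so there is no in-paper proof to match. What is worth pointing out is that your Hedberg-type scheme is exactly the technique the paper itself deploys for its genuinely new result, Theorem \ref{stein-weiss3}: there too the Riesz potential is split at a radius $\rho$, the near part is controlled via dyadic annuli by $\rho^{\gamma}M_{0}u(x)$, the far part by a fractional maximal/Morrey quantity, and $\rho$ is optimized to produce a pointwise bound that is then integrated using the boundedness of $M_{0}$ --- so your blind proof recovers the one-dimensional Lebesgue-space prototype of the paper's method. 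One small attribution slip: you say the convergence condition $\lambda p'>1$ ``is where $p<q$ is used,'' but in the chain $1-\lambda=\frac{1}{p}-\frac{1}{q}<\frac{1}{p}$ the strict inequality comes from $\frac{1}{q}>0$, i.e.\ from $q<\infty$; the hypothesis $p<q$ is instead equivalent (given the scaling relation) to $\lambda<1$, which is what makes your geometric series in the near part converge. This does not affect correctness, since all the needed inequalities hold under the stated hypotheses.
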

The $N$-dimensional analogue of \eqref{1Doper} can be written by the formula:
\begin{equation}\label{NDoper}
I_{\lambda}u(x)=\int_{\mathbb{R}^{N}}\frac{u(y)}{|x-y|^{\lambda}}dy,\,\,\,\,0<\lambda<N.
\end{equation}
The $N$-dimensional case of  Theorem \ref{1DHLS28} was obtained by Sobolev in
\cite{Sob38}:
\begin{thm}\label{THM:HLS}
Let $u\in L^{p}(\mathbb{R}^{N})$ and $1<p<q<\infty$ with $\frac{1}{q}=\frac{1}{p}+\frac{\lambda}{N}-1$. Then
\begin{equation}
\|I_{\lambda}u\|_{L^{q}(\mathbb{R}^{N})}\leq C \|u\|_{L^{p}(\mathbb{R}^{N})},
\end{equation}
where $C$ is a positive constant independent of $u$.
\end{thm}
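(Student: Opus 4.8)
The plan is to prove Theorem \ref{THM:HLS} by Hedberg's method, reducing the $L^p \to L^q$ boundedness of $I_\lambda$ to a pointwise estimate in terms of the Hardy--Littlewood maximal function $Mu(x)=\sup_{r>0}\frac{1}{|B(x,r)|}\int_{B(x,r)}|u(y)|\,dy$, combined with the classical maximal theorem. As a sanity check on the exponents, note that under the scaling $u\mapsto u(t\cdot)$ the operator $I_\lambda$ is homogeneous of degree $\lambda-N$, and requiring the inequality to be scale invariant forces exactly $\frac1p-\frac1q=\frac{N-\lambda}{N}$, i.e. the stated relation $\frac1q=\frac1p+\frac{\lambda}{N}-1$; this is what makes the balancing argument below close.

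First I would split the kernel at an arbitrary radius $R>0$, writing $I_\lambda u(x)=\int_{|x-y|<R}|x-y|^{-\lambda}u(y)\,dy+\int_{|x-y|\ge R}|x-y|^{-\lambda}u(y)\,dy$. For the near part I would decompose the ball into dyadic annuli $\{R/2^{j+1}\le |x-y|<R/2^{j}\}$ and bound the integral over each annulus by $(R/2^{j+1})^{-\lambda}$ times the average of $|u|$ over $B(x,R/2^j)$, which is controlled by $Mu(x)$. Summing the resulting geometric series converges precisely because $\lambda<N$, yielding the near estimate $\int_{|x-y|<R}|x-y|^{-\lambda}|u(y)|\,dy\le C\,R^{N-\lambda}\,Mu(x)$. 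For the far part I would apply Hölder's inequality with exponents $p,p'$, giving the bound $C\,\|u\|_{L^p}\bigl(\int_{|z|\ge R}|z|^{-\lambda p'}\,dz\bigr)^{1/p'}=C\,R^{N/p'-\lambda}\,\|u\|_{L^p}$, where the radial integral converges exactly when $\lambda p'>N$, equivalently $\frac{\lambda}{N}>\frac1{p'}$, which is the same as $q<\infty$.

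Next I would optimize in $R$. Balancing the two terms leads to the choice $R=(\|u\|_{L^p}/Mu(x))^{p/N}$, and substituting back gives the Hedberg pointwise inequality
\[
|I_\lambda u(x)|\le C\,\|u\|_{L^p}^{\,1-p/q}\,\bigl(Mu(x)\bigr)^{p/q},
\]
where one verifies using $\frac1q=\frac1p+\frac{\lambda}{N}-1$ that the exponent of $Mu(x)$ is indeed $p/q$ and that of $\|u\|_{L^p}$ is $1-p/q$. Raising to the power $q$ and integrating in $x$ then yields $\|I_\lambda u\|_{L^q}^q\le C\,\|u\|_{L^p}^{\,q-p}\,\|Mu\|_{L^p}^{p}$, and the proof concludes by invoking the strong $(p,p)$ maximal inequality $\|Mu\|_{L^p}\le C\|u\|_{L^p}$, which holds since $p>1$.

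I expect the main obstacle to be the maximal function bound itself: it is the only non-elementary ingredient, and it is exactly where the hypothesis $p>1$ is essential (the estimate fails at $p=1$). The role of the two remaining hypotheses is to make the kernel split work, namely $\lambda<N$ for convergence of the near dyadic sum and $\frac{\lambda}{N}>\frac1{p'}$ (equivalently $q<\infty$) for convergence of the far Hölder integral; together with the scaling identity these pin down the admissible range and guarantee the balancing of radii is consistent. Everything else is routine computation of the geometric series, the radial integral, and the arithmetic of exponents.
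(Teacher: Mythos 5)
Your proof is correct: the kernel split at radius $R$, the dyadic estimate of the near part by $C\,R^{N-\lambda}Mu(x)$ (convergent since $\lambda<N$), the H\"older bound $C\,R^{N/p'-\lambda}\|u\|_{L^p}$ on the far part (convergent since $\lambda p'>N$, which your exponent relation guarantees exactly when $q<\infty$), the optimization $R=(\|u\|_{L^p}/Mu(x))^{p/N}$ yielding $|I_\lambda u(x)|\le C\|u\|_{L^p}^{1-p/q}(Mu(x))^{p/q}$, and the final appeal to the strong $(p,p)$ maximal inequality are all sound; the only (routine) caveat you leave implicit is to dispose of the degenerate cases $\|u\|_{L^p}=0$ and $Mu(x)\in\{0,\infty\}$ before balancing. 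One point of comparison: the paper does not actually prove Theorem \ref{THM:HLS} at all --- it is quoted as Sobolev's classical theorem with a citation --- so there is no in-paper proof to match against; however, your argument is precisely the Lebesgue-space specialization of the method the paper does use for its Morrey-space analogue, Theorem \ref{stein-weiss3}, whose proof is announced as ``Hedberg's trick.'' The one substantive structural difference there is in the far part: since a Morrey function need not lie in $L^p(\G)$ globally, the paper cannot apply H\"older against $\|u\|_{L^p}$ as you do, and instead estimates the far dyadic sum by the fractional maximal function $M_{\frac{Q-\lambda}{Qp}}u(x)$, chooses $\rho=\bigl(M_{\frac{Q-\lambda}{Qp}}u(x)/M_0u(x)\bigr)^{p/(Q-\lambda)}$, and only afterwards bounds $M_{\frac{Q-\lambda}{Qp}}u(x)\le C\|u\|_{M^{\lambda}_p(\G)}$; in the case $\lambda=0$ this fractional-maximal bound collapses to the global $L^p$ norm and the paper's pointwise inequality reduces exactly to yours. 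So your route buys the classical theorem with the minimal machinery (only the Hardy--Littlewood maximal theorem, which is indeed where $p>1$ is essential), while the paper's variant of the same trick is the one that survives the passage to Morrey norms.
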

Later, in \cite{StWe58}  Stein and  Weiss  obtained the following  two-weight extention of  the  Hardy-Little$\-$wood-Sobolev inequality, which is known as the Stein-Weiss inequality or (radial) weighted Hardy-Littlewood-Sobolev inequality:
\begin{thm}\label{Classiacal_Stein-Weiss_inequality}
Let $0<\lambda<N$, $1<p<\infty$,

\begin{equation}
\alpha<\frac{N(p-1)}{p}, \,\,\,\,\beta<\frac{N}{q},\,\,\,\,\alpha+\beta\geq0,
\end{equation}
and
\begin{equation}
\frac{1}{q}=\frac{1}{p}+\frac{\lambda+\alpha+\beta}{N}-1.
\end{equation}
If $1<p\leq q<\infty$, then
\begin{equation}
\||x|^{-\beta}I_{\lambda}u\|_{L^{q}(\mathbb{R}^{N})}\leq C \||x|^{\alpha}u\|_{L^{p}(\mathbb{R}^{N})},
\end{equation}
where $C$ is a positive constant independent of $u$.
\end{thm}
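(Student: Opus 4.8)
The plan is to absorb the weights into the unknown and reduce the claim to an unweighted $L^{p}\to L^{q}$ bound for a single integral operator with a homogeneous kernel. Setting $v(y)=|y|^{\alpha}u(y)$, the asserted inequality is equivalent to $\|Tv\|_{L^{q}(\R)}\le C\|v\|_{L^{p}(\R)}$, where
\begin{equation*}
Tv(x)=\int_{\R}\frac{v(y)}{|x|^{\beta}\,|x-y|^{\lambda}\,|y|^{\alpha}}\,dy .
\end{equation*}
The kernel $K(x,y)=|x|^{-\beta}|x-y|^{-\lambda}|y|^{-\alpha}$ is homogeneous of degree $-(\alpha+\beta+\lambda)$, and testing with a scaling $x\mapsto tx$ shows that the exponent identity $\frac1q=\frac1p+\frac{\lambda+\alpha+\beta}{N}-1$ is precisely the dimensional-balance condition without which no such bound can hold. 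The content is therefore to prove \emph{sufficiency} under the three sign/size restrictions on $\alpha,\beta$.

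First I would record a duality symmetry that halves the work: the formal adjoint $T^{*}$ has kernel $K(y,x)=|x|^{-\alpha}|x-y|^{-\lambda}|y|^{-\beta}$, i.e. $T^{*}$ is an operator of the same shape with the pairs $\alpha,\beta$ and $p,q'$ interchanged. Since $T\colon L^{p}\to L^{q}$ is equivalent to $T^{*}\colon L^{q'}\to L^{p'}$, any estimate proved for one region transfers to its mirror region. I would then split $\R$ according to the three natural scales,
\begin{equation*}
\R=\{|y|<\tfrac12|x|\}\cup\{\tfrac12|x|\le|y|\le 2|x|\}\cup\{|y|>2|x|\},
\end{equation*}
writing $T=T_{1}+T_{2}+T_{3}$ accordingly. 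On the first piece $|x-y|\sim|x|$, so $K\sim|x|^{-\beta-\lambda}|y|^{-\alpha}$ and the kernel decouples; on the third piece $|x-y|\sim|y|$, so $K\sim|x|^{-\beta}|y|^{-\lambda-\alpha}$ and $T_{3}$ is the dual-type of $T_{1}$ under the symmetry above; on the middle piece $|x|\sim|y|$, so $K\sim|x|^{-\alpha-\beta}|x-y|^{-\lambda}$.

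For the decoupled piece $T_{1}$ (and hence $T_{3}$ by duality) I would pass to polar coordinates and reduce to a one-dimensional weighted Hardy operator in the radial variable, whose $L^{p}\to L^{q}$ boundedness is classical and holds precisely under $\alpha<N/p'=N(p-1)/p$ (local integrability of $|y|^{-\alpha}$ against $v$ near the origin) together with $\beta<N/q$ (integrability of the outer weight), the exponent identity fixing the remaining balance. For the middle piece $T_{2}$ I would localize dyadically to annuli $|x|\sim|y|\sim 2^{k}$, on each of which $|x|^{-\alpha-\beta}$ is essentially constant, apply the scale-invariant unweighted Hardy--Littlewood--Sobolev inequality (Theorem \ref{THM:HLS}) to the Riesz-potential part $|x-y|^{-\lambda}$, and recombine the scales; here $\alpha+\beta\ge0$, together with $p\le q$, is what makes the contributions from the different $k$ sum up rather than accumulate.

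The main obstacle I anticipate is the borderline case $p=q$. When $p=q$ the exponent identity forces $\alpha+\beta+\lambda=N$, so $K$ is homogeneous of degree exactly $-N$ and the Hardy--Littlewood--Sobolev input is unavailable (it requires $p<q$ strictly). In that case I would instead estimate $T_{2}$ directly by a Schur test: the homogeneity of degree $-N$ makes $\int K(x,y)\,|y|^{-\sigma}\,dy$ and $\int K(x,y)\,|x|^{-\sigma}\,dx$ both computable by a single change of variables, and choosing the test power $\sigma$ compatibly with the strict inequalities $\alpha<N/p'$ and $\beta<N/p$ yields finite Schur constants. Assembling the three uniform bounds and summing then gives the asserted inequality, with the strict constraints on $\alpha,\beta$ entering exactly as the conditions guaranteeing that each region's integral converges.
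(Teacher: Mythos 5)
The paper offers no proof of Theorem \ref{Classiacal_Stein-Weiss_inequality}: it is quoted as classical background with a citation to Stein and Weiss \cite{StWe58}. The meaningful comparison is therefore with the paper's proof of its main result, Theorem \ref{stein-weissthm}, of which the present statement is the Lebesgue-space ($\lambda_{\mathrm{Morrey}}=0$) Euclidean ancestor, and your architecture is essentially the same one used there: the trichotomy $\{|y|<\tfrac12|x|\}$, $\{\tfrac12|x|\le|y|\le2|x|\}$, $\{|y|>2|x|\}$; the off-diagonal pieces decoupled via $|x-y|\sim\max(|x|,|y|)$ and estimated by H\"older plus dyadic summation, with $\alpha<N/p'$ making the inner integral converge and $\beta<N/q$ controlling the dual piece; the diagonal piece reduced, after absorbing the weights, to the unweighted inequality (Theorem \ref{THM:HLS} here, Theorem \ref{stein-weiss3} in the paper); and a separate argument in the borderline case $p=q$, where the scaling relation forces $\lambda+\alpha+\beta=N$ (the paper uses Young's convolution inequality and the maximal operator in its Step 3; your Schur test is an equally valid substitute, and your computation is right: homogeneity of degree $-N$ plus rotation invariance reduce both Schur integrals to a single integral, finite at $z=0$ iff $\alpha<N/p'$, at $z=e_{1}$ iff $\lambda<N$, and at infinity iff $\beta<N/q$). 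Your duality remark, that $T^{*}$ has the same shape with $(\alpha,\beta)$ swapped and $L^{p}\to L^{q}$ replaced by $L^{q'}\to L^{p'}$, is sound and does halve the work.

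One step is stated too loosely to parse as written: on the middle piece you say you ``apply the unweighted HLS inequality to the Riesz-potential part $|x-y|^{-\lambda}$,'' but HLS for $I_{\lambda}$ pairs $p$ with the exponent $q_{0}$ given by $\frac{1}{q_{0}}=\frac{1}{p}+\frac{\lambda}{N}-1$, whereas your target $q$ satisfies $\frac{1}{q}=\frac{1}{p}+\frac{\lambda+\alpha+\beta}{N}-1$; when $\alpha+\beta>0$ these differ, so HLS with the pair $(p,q)$ does not apply to $|x-y|^{-\lambda}$ directly. The repair is exactly the manipulation in Step 3 of the paper's proof of Theorem \ref{stein-weissthm}: on the annulus one has $|x-y|\le 3|x|$ and $|x|\sim|y|$, so with $\alpha+\beta\ge0$ one trades powers,
\begin{equation*}
|x-y|^{-\lambda}=|x-y|^{-\lambda-\alpha-\beta}\,|x-y|^{\alpha+\beta}\le C\,|x-y|^{-\lambda-\alpha-\beta}\,|x|^{\beta}|y|^{\alpha},
\end{equation*}
which cancels both weights and leaves a Riesz kernel of power $\lambda+\alpha+\beta$, to which HLS with the pair $(p,q)$ does apply since $p<q$ is equivalent to $\lambda+\alpha+\beta<N$. (Alternatively, keep $I_{\lambda}$, land in $L^{q_{0}}$ on the annulus $A_{k}$, and pay the H\"older factor $|A_{k}|^{1/q-1/q_{0}}\sim 2^{k(\alpha+\beta)}$, which the weight $|x|^{-\alpha-\beta}\sim 2^{-k(\alpha+\beta)}$ absorbs.) With that line inserted, and $p\le q$ used via $\ell^{p}\hookrightarrow\ell^{q}$ to sum the boundedly overlapping annular contributions, your outline is a correct proof sketch of the classical statement, parallel in every essential respect to the paper's own scheme for its Morrey-space generalization.
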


The Hardy-Littlewood-Sobolev inequality on Euclidean spaces and the regularity of fractional integrals were studied in \cite{CF}, \cite{FM}, \cite{MW}  and \cite{Per}. In \cite{FS74} Folland and Stein obtained the Hardy-Littlewood-Sobolev inequality
on the Heisenberg group. We also note that the best constant of the Hardy-Littlewood-Sobolev inequality for the Heisenberg group is now known, see Frank and Lieb \cite{FL12} (in the Euclidean case this was done earlier by Lieb in \cite{Lie83}). The expression for the best constant depends on the particular quasi-norm used and may change for a different choice of a quasi-norm.
In  \cite{GMS} the authors studied the Stein-Weiss inequality on the Carnot groups. On homogeneous Lie groups, the Hardy-Littlewood-Sobolev and Stein-Weiss inequalities were obtained in \cite{KRS},  \cite{KRS2} and \cite{RY}.

In the Euclidean setting, Spanne \cite{P69} and Adams \cite{A} investigated the boundedness of the Riesz potential operator on the global Morrey space (that is, the Morrey space Hardy-Littlewood-Sobolev inequality). Their results can be stated as follows:

\begin{thm}[Spanne's result published by Peetre in \cite{P69}] \label{Peetre}
Let $0<\gamma<N$, $1<p<\frac{N}{\gamma}$, $0<\lambda<N-\gamma p$, $\frac{1}{q}=\frac{1}{p}-\frac{\gamma}{N}$ and $\frac{\lambda}{p}=\frac{\mu}{q}$. Then
\begin{equation}
\|I_{N-\gamma}u\|_{M^{\mu}_{q}(\mathbb{R}^{N})}\leq C \|u\|_{M^{\lambda}_{p}(\mathbb{R}^{N})},
\end{equation}
where $C$ is a positive constant independent of $u$.
\end{thm}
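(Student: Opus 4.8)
The plan is to bound $I_{N-\gamma}u$ on a fixed but arbitrary ball $B:=B(x_{0},r)$ and to read off the $M^{\mu}_{q}$-norm from these local estimates, splitting the potential into a local and a tail part that are handled by genuinely different tools. Write $u=u_{1}+u_{2}$, where $u_{1}=u\,\chi_{B(x_{0},2r)}$ and $u_{2}=u-u_{1}$, so that $I_{N-\gamma}u=I_{N-\gamma}u_{1}+I_{N-\gamma}u_{2}$. Since the Morrey norm is the supremum over all balls of $r^{-\mu/q}\|I_{N-\gamma}u\|_{L^{q}(B)}$, it suffices to estimate each of the two pieces on $B$ by $C\|u\|_{M^{\lambda}_{p}}\,r^{\mu/q}$, with $C$ independent of $x_{0}$ and $r$. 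Throughout I will use the defining property of the Morrey norm in the form $\big(\int_{B(x_{0},\rho)}|u|^{p}\big)^{1/p}\le \|u\|_{M^{\lambda}_{p}}\,\rho^{\lambda/p}$.

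For the local part I would invoke the classical Hardy--Littlewood--Sobolev inequality. Indeed $u_{1}\in L^{p}(\mathbb{R}^{N})$, and the exponents satisfy $\frac1q=\frac1p+\frac{N-\gamma}{N}-1$, so Theorem \ref{THM:HLS} applied to the kernel $|x-y|^{-(N-\gamma)}$ gives $\|I_{N-\gamma}u_{1}\|_{L^{q}(\mathbb{R}^{N})}\le C\|u_{1}\|_{L^{p}(\mathbb{R}^{N})}$. Restricting the left-hand side to $B$ and estimating $\|u_{1}\|_{L^{p}}=\big(\int_{B(x_{0},2r)}|u|^{p}\big)^{1/p}\le C\|u\|_{M^{\lambda}_{p}}r^{\lambda/p}$ yields $r^{-\mu/q}\|I_{N-\gamma}u_{1}\|_{L^{q}(B)}\le C\|u\|_{M^{\lambda}_{p}}\,r^{\lambda/p-\mu/q}$, which is the desired bound precisely because the hypothesis $\frac{\lambda}{p}=\frac{\mu}{q}$ cancels the power of $r$. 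It is exactly this step that forces the Spanne balance condition between $\lambda$ and $\mu$.

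For the tail part I would argue pointwise. If $x\in B$ and $y\notin B(x_{0},2r)$ then $|x-y|\ge\frac12|x_{0}-y|$, so decomposing the complement of $B(x_{0},2r)$ into the dyadic annuli $2^{k+1}r\le|x_{0}-y|<2^{k+2}r$, applying H\"older's inequality on each and then the Morrey bound, one obtains
\begin{equation*}
|I_{N-\gamma}u_{2}(x)|\le C\sum_{k\ge0}(2^{k}r)^{\gamma-N}\,\|u\|_{M^{\lambda}_{p}}\,(2^{k}r)^{\lambda/p}(2^{k}r)^{N/p'}=C\|u\|_{M^{\lambda}_{p}}\sum_{k\ge0}(2^{k}r)^{\gamma-\frac{N-\lambda}{p}}.
\end{equation*}
The geometric series converges exactly because the hypothesis $\lambda<N-\gamma p$ makes the exponent $\gamma-\frac{N-\lambda}{p}$ negative, leaving $|I_{N-\gamma}u_{2}(x)|\le C\|u\|_{M^{\lambda}_{p}}\,r^{\gamma-\frac{N-\lambda}{p}}$ for every $x\in B$. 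Integrating this constant bound over $B$ and using $|B|\sim r^{N}$ produces $r^{-\mu/q}\|I_{N-\gamma}u_{2}\|_{L^{q}(B)}\le C\|u\|_{M^{\lambda}_{p}}\,r^{\gamma-\frac{N-\lambda}{p}+\frac{N-\mu}{q}}$, and a short computation using both $\frac1q=\frac1p-\frac{\gamma}{N}$ and $\frac{\lambda}{p}=\frac{\mu}{q}$ shows that the exponent of $r$ vanishes. Adding the two estimates and taking the supremum over all balls $B(x_{0},r)$ then gives the claim.

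The routine part is the dyadic summation and the exponent arithmetic; the one genuine decision is to treat the local piece by the classical inequality of Theorem \ref{THM:HLS} rather than by the Hardy--Littlewood maximal function. The maximal-function route would give the stronger Adams-type conclusion with an unchanged Morrey index, but it requires a separate boundedness result for $M$ on Morrey spaces, whereas the Hardy--Littlewood--Sobolev route is self-contained with the results already available and leads naturally to the Spanne relation $\frac{\lambda}{p}=\frac{\mu}{q}$. The main point to watch is the compatibility of the two exponent conditions: the tail estimate needs $\lambda<N-\gamma p$ for the convergence of the series, while both pieces rely on $\frac{\lambda}{p}=\frac{\mu}{q}$ together with the Sobolev relation $\frac1q=\frac1p-\frac{\gamma}{N}$ to eliminate the surviving power of $r$.
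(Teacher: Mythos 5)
Your proof is correct: the ball-relative splitting $u=u_{1}+u_{2}$, the HLS bound for the local piece, the dyadic-annuli-plus-H\"older bound for the tail, and the exponent arithmetic all check out (the tail exponent $\gamma-N+\frac{\lambda}{p}+\frac{N}{p'}=\gamma-\frac{N-\lambda}{p}$ is negative exactly when $\lambda<N-\gamma p$, and the surviving power $\gamma-\frac{N-\lambda}{p}+\frac{N-\mu}{q}$ vanishes upon substituting $\frac{N}{q}=\frac{N}{p}-\gamma$ and $\frac{\lambda}{p}=\frac{\mu}{q}$). You should know, however, that the paper contains no proof of Theorem \ref{Peetre} at all: it is quoted from the literature (Spanne's result, published by Peetre) purely as background, so the only meaningful comparison is with the machinery the paper builds for its own theorems, and there your route genuinely diverges. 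For the Adams-type Theorem \ref{stein-weiss3} the paper uses Hedberg's trick: the potential is split at a radius $\rho$ around the point $x$ itself (not around the center of a fixed ball), the near part is bounded by $\rho^{\gamma}M_{0}u(x)$ and the far part by $\rho^{\gamma-\frac{Q-\lambda}{p}}$ times a fractional maximal function controlled by the Morrey norm, and optimizing in $\rho$ yields the pointwise estimate $|I_{\gamma}u(x)|\leq C\left(M_{0}u(x)\right)^{1-\frac{p\gamma}{Q-\lambda}}\|u\|_{M^{\lambda}_{p}}^{\frac{p\gamma}{Q-\lambda}}$, after which boundedness of $M_{0}$ on $M^{\lambda}_{p}$ (Theorem \ref{MF}, whose proof uses precisely the $u_{1}+u_{2}$ ball decomposition you employ) closes the argument with the Morrey index $\lambda$ unchanged. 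Your assessment of the trade-off is accurate: your route is self-contained given only the classical Theorem \ref{THM:HLS} and delivers exactly the Spanne conclusion, with the balance condition $\frac{\lambda}{p}=\frac{\mu}{q}$ emerging from the local piece; the Hedberg route costs an extra maximal-function lemma but produces the stronger Adams-type scaling $\frac{1}{q}=\frac{1}{p}-\frac{\gamma}{N-\lambda}$, which is why the paper adopts it for its main results.
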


\begin{thm}[Adams's result \cite{A}] \label{Adams}
Let $0<\gamma<N$, $1<p<\frac{N}{\gamma}$, $0<\lambda<N-\gamma p$, $\frac{1}{q}=\frac{1}{p}-\frac{\gamma}{N-\lambda}$. Then
\begin{equation}
\|I_{N-\gamma}u\|_{M^{\lambda}_{q}(\mathbb{R}^{N})}\leq C \|u\|_{M^{\lambda}_{p}(\mathbb{R}^{N})},
\end{equation}
where $C$ is a positive constant independent of $u$.
\end{thm}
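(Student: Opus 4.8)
The plan is to follow Adams' original argument, whose engine is a pointwise bound of Hedberg type combined with the boundedness of the Hardy--Littlewood maximal operator on Morrey spaces. Fix $u$ with $\|u\|_{M^{\lambda}_{p}}<\infty$, write $Mu$ for the Hardy--Littlewood maximal function of $u$, and for each point $x$ and each scale $\delta>0$ split the potential according to whether the kernel $|x-y|^{-(N-\gamma)}$ is integrated near $x$ or far from $x$:
\[
I_{N-\gamma}u(x)=\int_{|x-y|\le\delta}\frac{u(y)}{|x-y|^{N-\gamma}}\,dy+\int_{|x-y|>\delta}\frac{u(y)}{|x-y|^{N-\gamma}}\,dy=:A_{\delta}(x)+B_{\delta}(x).
\]
The two pieces are estimated by different mechanisms.

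First I would bound the local part $A_{\delta}$ by the maximal function. Decomposing $\{|x-y|\le\delta\}$ into dyadic annuli and using the elementary estimate $\int_{B(x,r)}|u|\,dy\le c\,r^{N}Mu(x)$, the annulus at scale $2^{-j}\delta$ contributes at most $c\,(2^{-j}\delta)^{\gamma-N}(2^{-j}\delta)^{N}Mu(x)=c\,2^{-j\gamma}\delta^{\gamma}Mu(x)$; since $\gamma>0$ the geometric series converges and $A_{\delta}(x)\le c\,\delta^{\gamma}Mu(x)$. For the far part $B_{\delta}$ I would again use dyadic annuli, but now control the local $L^{1}$-mass of $u$ by H\"older's inequality together with the Morrey condition $\big(\int_{B(x,R)}|u|^{p}\big)^{1/p}\le\|u\|_{M^{\lambda}_{p}}R^{\lambda/p}$, which gives $\int_{B(x,R)}|u|\le c\,\|u\|_{M^{\lambda}_{p}}R^{\lambda/p+N/p'}$. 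A direct computation shows that the power of the dyadic scale in the resulting series equals $\gamma-(N-\lambda)/p$, which is strictly negative precisely because of the hypothesis $\lambda<N-\gamma p$; the series therefore converges and
\[
B_{\delta}(x)\le c\,\|u\|_{M^{\lambda}_{p}}\,\delta^{\gamma-(N-\lambda)/p}.
\]

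With both bounds in hand I would optimize in $\delta$. Balancing $\delta^{\gamma}Mu(x)$ against $\|u\|_{M^{\lambda}_{p}}\delta^{\gamma-(N-\lambda)/p}$ leads to $\delta\sim\big(\|u\|_{M^{\lambda}_{p}}/Mu(x)\big)^{p/(N-\lambda)}$, and substituting yields the Hedberg-type pointwise estimate
\[
|I_{N-\gamma}u(x)|\le c\,\|u\|_{M^{\lambda}_{p}}^{1-p/q}\,(Mu(x))^{p/q},
\]
where the exponent $p/q=1-\gamma p/(N-\lambda)$ is read off directly from the relation $\tfrac1q=\tfrac1p-\tfrac{\gamma}{N-\lambda}$. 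Raising this to the $q$-th power and averaging over a ball $B(x,r)$ gives
\[
\Big(r^{-\lambda}\!\int_{B(x,r)}|I_{N-\gamma}u|^{q}\Big)^{1/q}\le c\,\|u\|_{M^{\lambda}_{p}}^{1-p/q}\Big(r^{-\lambda}\!\int_{B(x,r)}(Mu)^{p}\Big)^{1/q}.
\]

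The final ingredient is the boundedness of the Hardy--Littlewood maximal operator on the Morrey space, $\|Mu\|_{M^{\lambda}_{p}}\le c\,\|u\|_{M^{\lambda}_{p}}$ (the Chiarenza--Frasca theorem). Granting this, the right-hand factor above is at most $(c\,\|u\|_{M^{\lambda}_{p}})^{p/q}$, and combining the two powers of $\|u\|_{M^{\lambda}_{p}}$ produces a single factor; taking the supremum over $x$ and $r$ then yields $\|I_{N-\gamma}u\|_{M^{\lambda}_{q}}\le c\,\|u\|_{M^{\lambda}_{p}}$. I expect the two genuine points of difficulty to be, first, the exponent bookkeeping in the far part, where one must check that the convergence threshold is exactly $\lambda<N-\gamma p$; and second, the maximal-function estimate on Morrey spaces, which is the only non-elementary input. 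In the homogeneous-group setting pursued later in the paper it is precisely this maximal inequality, together with the replacement of $|x-y|$ by a homogeneous quasi-norm, that must be re-established, so this is the step I would treat with the most care.
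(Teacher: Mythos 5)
Your proposal is correct and follows essentially the same route as the paper's proof of its Theorem \ref{stein-weiss3} (the homogeneous-group version of this statement): Hedberg's trick with the same near/far dyadic splitting, the same optimization in $\delta$ yielding the pointwise bound $|I_{N-\gamma}u(x)|\leq c\,\|u\|_{M^{\lambda}_{p}}^{1-p/q}(Mu(x))^{p/q}$, and the same final appeal to the boundedness of the maximal operator on $M^{\lambda}_{p}$, which the paper proves as Theorem \ref{MF}. The only cosmetic difference is that the paper phrases the far-part estimate through the fractional maximal operator $M_{\frac{Q-\lambda}{Qp}}$ and then bounds that by the Morrey norm, whereas you apply H\"older's inequality and the Morrey condition directly on each annulus --- the identical computation, and your exponent bookkeeping ($\gamma-(N-\lambda)/p<0$ exactly when $\lambda<N-\gamma p$) matches the paper's.
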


 For the general global Morrey space, the Spanne type Hardy-Littlewood-Sobolev inequality on the Euclidean space was investigated in \cite{N94,PS10} and on general metric measure space in \cite{KM, S22}. The Spanne type Stein-Weiss inequality was also studied in \cite{Ho}.
Weighted estimates on global Morrey spaces in the Adams range were obtained in \cite{NST18, T11}. Note that the unweighted Adams type inequality in the local Morrey spaces does not hold (see \cite{KS17}).
Some complements
of both Theorem \ref{Peetre} and Theorem \ref{Adams} were considered in the recent paper \cite{NS22}.

The main goal of this paper is to obtain the Stein-Weiss extension of Theorem \ref{Adams}. We state our result on homogeneous Lie groups. In particular, the obtained result recovers the previously known results on Euclidean (Abelian), Heisenberg, and Carnot (stratified) groups since the class of the homogeneous Lie groups contains those. However, our main result Theorem \ref{stein-weissthm} seems new even in the Euclidean setting.
Note that in this direction systematic studies of different functional inequalities on general homogeneous (Lie) groups were initiated by the paper \cite{RSAM}. We refer to this and other papers by the authors (e.g. \cite{RSYm}) for further discussions.

Summarising our main results in this paper, we present the following facts:
\begin{itemize}
\item Hardy-Littlewood-Sobolev inequality on  the global Morrey space;
\item Stein-Weiss-Adams inequality on the global Morrey space;
\item Morrey space fractional (and integer) Hardy and Rellich inequalities;
\item Morrey space fractional (and integer) Hardy-Sobolev inequality;
\item Morrey space fractional (and integer) Gagliardo-Nirenberg inequality.
\end{itemize}

\section{Preliminaries}
\label{SEC:2}
\subsection{Homogeneous groups} A Lie group (on $\mathbb{R}^{N}$) $\mathbb{G}$
is said to be homogeneous if there is a dilation $D_{\lambda}(x)$ such that
$$D_{\lambda}(x):=(\lambda^{\nu_{1}}x_{1},\ldots,\lambda^{\nu_{N}}x_{N}),\; \nu_{1},\ldots, \nu_{n}>0,\; D_{\lambda}:\mathbb{R}^{N}\rightarrow\mathbb{R}^{N},$$
which is an automorphism of the group $\mathbb{G}$ for each $\lambda>0$. For simplicity, throughout this paper we use the notation $\lambda x$ for the dilation $D_{\lambda} (x).$  The homogeneous dimension of  $\mathbb{G}$ is denoted by $Q:=\nu_{1}+\ldots+\nu_{N}.$
In this paper, we denote a homogeneous quasi-norm on $\mathbb{G}$ by $|x|$, which
is a continuous non-negative function
\begin{equation}
\mathbb{G}\ni x\mapsto |x|\in[0,\infty),
\end{equation}
with the properties

\begin{itemize}
	\item[i)] $|x|=|x^{-1}|$ for all $x\in\mathbb{G}$,
	\item[ii)] $|\lambda x|=\lambda |x|$ for all $x\in \mathbb{G}$ and $\lambda>0$,
	\item[iii)] $|x|=0$ iff $x=0$.
\end{itemize}
Moreover, the following polarisation formula on homogeneous (Lie) groups will be used in our proofs:
there is a (unique)
positive Borel measure $\sigma$ on the
unit quasi-sphere
$
\mathfrak{S}:=\{x\in \mathbb{G}:\,|x|=1\},
$
so that for every $f\in L^{1}(\mathbb{G})$ we have (see \cite{FS1})
\begin{equation}\label{EQ:polar}
\int_{\mathbb{G}}f(x)dx=\int_{0}^{\infty}
\int_{\mathfrak{S}}f(ry)r^{Q-1}d\sigma(y)dr.
\end{equation}
The quasi-ball centred at $x \in \mathbb{G}$ with radius $R > 0$ can be defined by
\begin{equation*}
B(x,R) := \{y \in\mathbb {G} : |x^{-1} y|< R\}.
\end{equation*}
A homogeneous group is necessarily nilpotant and unimodular, and the
Haar measure on $\mathbb{G}$ coincides with the Lebesgue measure (see e.g. \cite[Proposition 1.6.6]{FR}); we will denote it by $dx$. If $|S|$ denotes the volume of a measurable set $S\subset \mathbb{G}$, then
\begin{equation}\label{scal}
|D_{\lambda}(S)|=\lambda^{Q}|S| \quad {\rm and}\quad \int_{\mathbb{G}}f(\lambda x)
dx=\lambda^{-Q}\int_{\mathbb{G}}f(x)dx.
\end{equation}
Hence, we have that the Haar measure of the quasi-ball has the following estimate
\begin{equation}\label{ballmeasure}
    C^{-1}R^{Q}\leq |B(x,R)|\leq CR^{Q}.
\end{equation}

For discussions on properties of the homogeneous group, we refer to books \cite{FS1}, \cite{FR} and \cite{RS_book}.

Let us consider the integral operator
\begin{equation}
I_{\gamma}u(x)=|x|^{\gamma-Q}*u(x)=\int_{\mathbb{G}}|y^{-1} x|^{\gamma-Q}u(y)dy,\,\,\,0<\gamma <Q,
\end{equation}
where $*$ is the convolution.

Let us introduce the Hardy-Littlewood maximal operator and the fractional Hardy-Littlewood maximal operator:
\begin{equation}
    M_{0}u(x):=\sup_{r>0}\frac{1}{|B(x,r)|}\int_{B(x,r)}|u(y)|dy,
\end{equation}
and
\begin{equation}
    M_{\alpha}u(x):=\frac{1}{|B(x,r)|^{1-\alpha}}\int_{B(x,r)}|u(y)|dy,
\end{equation}
respectively.

Let us also recall the following well-known fact about quasi-norms on homogeneous (Lie) groups.
\begin{prop}[\cite{FR}, Theorem 3.1.39] \label{prop_quasi_norm}
Let $\mathbb{G}$ be a homogeneous group. Then there exists a homogeneous
quasi-norm on $\mathbb{G}$ which is a norm, that is, a homogeneous quasi-norm $|\cdot|$
which satisfies the triangle inequality
\begin{equation}\label{tri}
|x y|\leq |x| + |y|, \,\,\,\forall x, y \in \mathbb{G}.
\end{equation}
Furthermore, all homogeneous quasi-norms on $\mathbb{G}$ are equivalent.
\end{prop}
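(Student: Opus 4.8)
The plan is to treat the two assertions separately, beginning with the equivalence of quasi-norms, which is elementary, and then turning to the existence of a quasi-norm satisfying the triangle inequality, which is the substantial point.

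For the equivalence, I would fix two homogeneous quasi-norms $|\cdot|_1$ and $|\cdot|_2$ on $\mathbb{G}$. First I would check that the quasi-sphere $\mathfrak{S}_1:=\{x\in\mathbb{G}:|x|_1=1\}$ is compact: it is closed since $|\cdot|_1$ is continuous, and it is bounded because property (ii) forces $|D_{\lambda}(\omega)|_1=\lambda|\omega|_1\to\infty$ along every dilation ray, so $\{|x|_1=1\}$ cannot escape to Euclidean infinity. On the compact set $\mathfrak{S}_1$ the continuous function $|\cdot|_2$ is strictly positive by (iii), hence attains a positive minimum $m$ and a finite maximum $M$. Every $x\neq 0$ has a unique representation $x=D_{t}(\omega)$ with $t=|x|_1>0$ and $\omega=D_{1/t}(x)\in\mathfrak{S}_1$, and homogeneity (ii) gives $|x|_2=t|\omega|_2$, so that $m|x|_1\le|x|_2\le M|x|_1$. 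This is the desired equivalence.

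For the existence of a norm, my strategy is to realise a homogeneous quasi-norm as the Minkowski gauge of a suitable symmetric dilation-star-shaped set and to reduce the triangle inequality to a convexity-type property of that set. Concretely, for a symmetric ($K=K^{-1}$) compact neighbourhood $K$ of the origin whose boundary meets each dilation ray exactly once, I would set $|x|:=\inf\{t>0:x\in D_{t}(K)\}$. Homogeneity, symmetry and positive-definiteness are immediate from the construction and from the fact that the dilations are automorphisms. Using $D_{\lambda}(xy)=D_{\lambda}(x)D_{\lambda}(y)$, the triangle inequality $|xy|\le|x|+|y|$ is seen to be equivalent to the inclusion $D_{s}(K)\cdot D_{1-s}(K)\subseteq K$ for all $s\in[0,1]$, a ``$\delta$-convexity'' of the unit ball $K$.

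The main obstacle is precisely to produce a set $K$ with this $\delta$-convexity property: because $\mathbb{G}$ is non-abelian, the Minkowski-gauge argument from the linear theory does not apply directly. Here I would take $K$ to be a small symmetric Euclidean ball $\overline{B^{E}_{\epsilon}}$ and exploit that the group law is polynomial, $x\cdot y=x+y+P(x,y)$, where each component $P_j$ is $D_{\lambda}$-homogeneous of the corresponding degree and has Euclidean degree at least two, so that on the lowest-weight coordinates $P$ vanishes identically while on the remaining coordinates it is of strictly higher order in $\epsilon$ than the abelian part $x+y$. Since the abelian model ($P\equiv 0$) satisfies the inclusion with room to spare for $s$ away from the endpoints $\{0,1\}$ (and trivially at the endpoints, where one factor collapses to the origin), a perturbation argument shows that for $\epsilon$ small enough the corrections cannot destroy the inclusion, which yields the triangle inequality. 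This is the Hebisch--Sikora construction underlying Theorem 3.1.39 of \cite{FR}, and controlling these non-abelian correction terms uniformly on the rescaled ball is the delicate step.
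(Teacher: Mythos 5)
First, a point of comparison: the paper does not prove this proposition at all — it is imported verbatim from \cite{FR}, Theorem 3.1.39, whose existence part is the Hebisch--Sikora theorem, so your attempt must be measured against that proof, which you correctly identify by name. Your equivalence argument is the standard compactness argument and is essentially right; the one soft spot is the boundedness of $\mathfrak{S}_1$: divergence of $\lambda\mapsto|D_{\lambda}(\omega)|_1$ along each \emph{fixed} ray does not by itself bound the level set. This is easily patched: $|\cdot|_1$ has a positive minimum $a$ on the compact Euclidean unit sphere, every $x\neq 0$ can be written as $D_t(\omega)$ with $\omega$ on that sphere (by continuity and monotonicity of $t\mapsto\|D_{1/t}(x)\|_{E}$), and then $|x|_1\geq at$ while $\|x\|_E$ is controlled by $t$, so $\{|x|_1=1\}$ is bounded. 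Likewise, your reduction of the triangle inequality for the gauge $|x|=\inf\{t>0:x\in D_t(K)\}$ to the inclusion $D_s(K)\cdot D_{1-s}(K)\subseteq K$ for all $s\in[0,1]$ is correct and is exactly the Hebisch--Sikora condition.

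The genuine gap is in how you propose to verify this inclusion for a small Euclidean ball $K=\overline{B^{E}_{\epsilon}}$. Your claim that ``the abelian model satisfies the inclusion with room to spare for $s$ away from the endpoints'' is false. After normalizing $\min_j\nu_j=1$ (a normalization you also need even to get the abelian inclusion, since $\|D_s x\|_E\leq s\|x\|_E$ fails if some $\nu_j<1$), equality in the abelian estimate $\|D_s x+D_{1-s}y\|_E\leq s\|x\|_E+(1-s)\|y\|_E\leq\epsilon$ holds for \emph{every} $s\in(0,1)$ at the configurations $x=y$ with $x$ supported in the weight-one coordinates and $\|x\|_E=\epsilon$: there $\|sx+(1-s)x\|_E=\epsilon$ exactly. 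So there is no $\epsilon$-uniform slack anywhere near these configurations, and a perturbation argument in which the corrections $P(D_sx,D_{1-s}y)$ are ``of strictly higher order in $\epsilon$'' cannot close: what the corrections must be dominated by is not the abelian part $x+y$ (of size $\epsilon$) but the abelian \emph{slack} $\epsilon-\|D_sx+D_{1-s}y\|_E$, which vanishes identically on a nontrivial set of boundary configurations, so it is not bounded below by any power of $\epsilon$. The inclusion itself is not violated at those points — weight-one vectors commute, so $D_sx\cdot D_{1-s}x=x$ there — but for nearby $(x,y)$ one must compare the \emph{rates} at which the slack and the bilinear correction terms vanish as one approaches the degenerate set, and this second-order analysis is precisely the content of Hebisch--Sikora's estimate (reproduced in the proof of \cite{FR}, Theorem 3.1.39). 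Your sketch flags this step as ``delicate'' but asserts that a straightforward smallness-of-$\epsilon$ perturbation handles it; as written, the key inclusion remains unproved.
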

\subsection{Stratified groups}
In this subsection, we recall the definition of the homogeneous stratified group (or homogeneous Carnot group). 
It is an important class of homogeneous groups. In this environment, the theory of basic function inequalities
becomes intricately intertwined with the properties of sub-Laplacians. We refer  \cite{BLU07}, \cite{FR} and \cite{RS_book} for further discussions in this direction. 
\begin{defn} \label{maindef}
A Lie group $\mathbb{G}=(\mathbb{R}^{N},\circ)$ is called a homogeneous stratified group if it satisfies the following assumptions:

(a) For some natural numbers $N_{1}+...+N_{r}=N$
the decomposition $\mathbb{R}^{N}=\mathbb{R}^{N_{1}}\times...\times\mathbb{R}^{N_{r}}$ is valid, and
for every $\lambda>0$ the dilation $\delta_{\lambda}: \mathbb{R}^{N}\rightarrow \mathbb{R}^{N}$
given by
$$\delta_{\lambda}(x)\equiv\delta_{\lambda}(x^{(1)},...,x^{(r)}):=(\lambda x^{(1)},...,\lambda^{r}x^{(r)})$$
is an automorphism of the group $\mathbb{G}.$ Here $x^{(k)}\in \mathbb{R}^{N_{k}}$ for $k=1,...,r.$

(b) Let $N_{1}$ be as in (a) and let $X_{1},...,X_{N_{1}}$ be the left-invariant vector fields on $\mathbb{G}$ such that
$X_{k}(0)=\frac{\partial}{\partial x_{k}}|_{0}$ for $k=1,...,N_{1}.$ Then
$${\rm rank}({\rm Lie}\{X_{1},...,X_{N_{1}}\})=N,$$
for every $x\in\mathbb{R}^{N},$ i.e. the iterated commutators
of $X_{1},...,X_{N_{1}}$ span the Lie algebra of $\mathbb{G}.$
\end{defn}

The notation
$$\nabla_{\mathbb{G}}:=(X_{1},\ldots, X_{N_{1}})$$
presents the horizontal gradient on $\G$.
So, the sub-Laplacian on (homogeneous) stratified groups is given by
$$\Delta_{\G}:=\nabla_{\G} \cdot \nabla_{\G}.$$

Note that a Lie group is called stratified if it is connected and simply-connected
Lie group whose Lie algebra is stratified. Any (abstract) stratified group is isomorphic to a homogeneous one. 
In this paper, we will refer to the homogeneous stratified (Lie) group as the "stratified group" for the sake of simplicity and clarity.

\subsection{Morrey spaces}
%
Let $\G$ be a homogeneous group, $1 < p < +\infty$, $0 < \lambda < Q$, a real function
 $f\in L^{p}_{\text{loc}}(\G)$ and the set described by the quantity
\begin{equation}
    r^{-\lambda}\int_{B(x,r)}|f(y)|^{p}dy, x\in\G,
\end{equation}
is upper bounded, then
we say that $f$ belongs to the  global Morrey space $M^{\lambda}_{p}(\G)$;  this space can be endowed with the norm
\begin{equation}
    \|f\|^{p}_{M^{\lambda}_{p}(\G)}:=\sup\limits_{x\in \G}\sup\limits_{r>0}r^{-\lambda}\int_{ B(x,r)}|f(y)|^{p}dy.
\end{equation}
Similarly, with the previous definition, we introduce the  local Morrey space ${LM^{\lambda}_{p}(\G)}$ with the norm
\begin{equation}
    \|f\|^{p}_{LM^{\lambda}_{p}(\G)}:=\sup\limits_{r>0}r^{-\lambda}\int_{B(e,r)}|f(y)|^{p}dy,
\end{equation}
where $e$ is the identity element of $\G$.

Also, it is well known  if $\lambda=0$ then $M^{0}_{p}(\G)=L^{p}(\G)$ and if $\lambda=Q$ we have $M^{Q}_{p}(\G)=L^{\infty}(\G)$. For more details on this topic, we refer to \cite{ADbook},  \cite{Rag08} and \cite{Rag12} .

\subsection{Fractional Laplacians}

Here, we briefly define the fractional Laplacian on $\mathbb{R}^{N}$. For the convenience of readers, we give the main references to the fractional Laplacian in \cite{DVP12,G22}. By $\mathcal{S}$ we denote  the Schwartz space  of rapidly
decaying $C^{\infty}$ functions in $\mathbb{R}^{N}$. We define the fractional Laplacian $(-\Delta)^{s}$  as
\begin{defn}
Let $0 < s < 1$. The fractional Laplacian of a function is the nonlocal operator in $\mathbb{R}^{N}$ defined by the expression
    \begin{equation}\label{fraclap}
    (-\Delta)^{s}u(x):=\frac{A(N,s)}{2}\int_{\mathbb{R}^{N}}\frac{2u(x) - u(x + y) - u(x -y)}{|y|^{N+2s}}dy, \,\,\,\,\forall\,\,x\in\mathbb{R}^{N},\,\,\,u\in\mathcal{S},
\end{equation}
where $A(N,s) > 0$ is a suitable normalization constant.
\end{defn}
Also, we describe  the symbol of the fractional Laplacian in the next proposition.
\begin{prop}
    Let $0 < s < 1$ and let $(-\Delta)^{s}:\mathcal{S}\rightarrow L^{2}(\mathbb{R}^{N})$ be the
fractional Laplacian operator defined by \eqref{fraclap}. Then, for any $u\in\mathcal{S}$, we have
\begin{equation*}
     (-\Delta)^{s}u(x)=\mathcal{F}^{-1}\left(|\xi|^{2s}\mathcal{F}\hat{u}\right)(x),\,\,\,\,\xi\in\mathbb{R}^{N}.
\end{equation*}
\end{prop}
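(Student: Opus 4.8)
The plan is to pass to the Fourier side and verify directly that the multiplier attached to the singular integral equals $|\xi|^{2s}$. Before doing so I would first record that the defining integral converges absolutely for $u\in\mathcal{S}$: a second-order Taylor expansion of $u$ about $x$ gives $2u(x)-u(x+y)-u(x-y)=O(|y|^{2})$ as $y\to 0$, since the odd, first-order terms cancel in the symmetric second difference. Thus near the origin the integrand is controlled by $|y|^{2-N-2s}$, which is integrable because $2s<2$; away from the origin the numerator is bounded and the rapid decay of $u$ makes the tail harmless. This symmetric cancellation is precisely what lets us dispense with any principal value and treat the operator as an ordinary Lebesgue integral.

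Next I would apply $\mathcal{F}$ and exchange it with the $y$-integration. Using the translation rule $\mathcal{F}[u(\cdot\pm y)](\xi)=e^{\pm i y\cdot\xi}\hat u(\xi)$, the transform of the numerator becomes $(2-e^{iy\cdot\xi}-e^{-iy\cdot\xi})\hat u(\xi)=(2-2\cos(y\cdot\xi))\hat u(\xi)$, so that
\[
\mathcal{F}\big[(-\Delta)^{s}u\big](\xi)=A(N,s)\,\hat u(\xi)\int_{\mathbb{R}^{N}}\frac{1-\cos(y\cdot\xi)}{|y|^{N+2s}}\,dy.
\]
A crucial point is that the three terms must be kept together: the individual integrals of $e^{\pm iy\cdot\xi}|y|^{-N-2s}$ diverge, and only the combination $1-\cos(y\cdot\xi)=O(|y\cdot\xi|^{2})$ tames the singularity at $y=0$. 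The interchange is justified by Fubini's theorem applied to the combined integrand, using for $|y|\le 1$ the bound $|2u(x)-u(x+y)-u(x-y)|\le |y|^{2}\sup_{|z|\le 1}|D^{2}u(x+z)|$ with an $L^{1}$ majorant in $x$, and for $|y|>1$ the crude bound whose $x$-integral is at most $4\|u\|_{L^{1}}$; in both regions $|y|^{-N-2s}$ is integrable in $y$.

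It then remains to evaluate $J(\xi):=\int_{\mathbb{R}^{N}}(1-\cos(y\cdot\xi))|y|^{-N-2s}\,dy$ and to fix the constant. By rotation invariance of $|y|$ and of Lebesgue measure I would assume $\xi=|\xi|e_{1}$ and substitute $z=|\xi|y$: since $|y|^{-N-2s}\,dy$ scales as $|\xi|^{2s}|z|^{-N-2s}\,dz$, this yields $J(\xi)=|\xi|^{2s}J(e_{1})$ with $J(e_{1})=\int_{\mathbb{R}^{N}}(1-\cos z_{1})|z|^{-N-2s}\,dz$ a finite positive number (convergent at $0$ because $1-\cos z_{1}=O(|z|^{2})$ and $2s<2$, and at infinity because $N+2s>N$). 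Choosing the normalization so that $A(N,s)=J(e_{1})^{-1}$ gives $\mathcal{F}[(-\Delta)^{s}u](\xi)=|\xi|^{2s}\hat u(\xi)$, and applying $\mathcal{F}^{-1}$ recovers the claimed identity. The main technical obstacle is the rigorous justification of the Fourier/Fubini interchange together with the indivisibility of the integrand; once that is secured, the remaining step is the elementary scaling computation that extracts $|\xi|^{2s}$ and pins down $A(N,s)$.
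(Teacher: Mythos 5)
Your proof is correct. Note that the paper itself gives no proof of this proposition: it is stated as a known fact with references to Di Nezza--Palatucci--Valdinoci and Garofalo, so the comparison is with the standard argument in those sources, which your write-up reproduces essentially verbatim and with the right emphases: absolute convergence of the second difference via the quadratic Taylor bound (so no principal value is needed), the Fubini interchange with the combined integrand $|2u(x)-u(x+y)-u(x-y)|\,|y|^{-N-2s}$ split at $|y|=1$ (this is the correct way to justify it, since the three exponentials individually produce divergent integrals), the rotation-and-scaling reduction $J(\xi)=|\xi|^{2s}J(e_{1})$, and the identification $A(N,s)=\left(\int_{\mathbb{R}^{N}}(1-\cos z_{1})\,|z|^{-N-2s}\,dz\right)^{-1}$, which is exactly what the phrase ``suitable normalization constant'' in the paper's definition encodes. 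You also implicitly read the displayed formula in the statement correctly: the expression $\mathcal{F}\hat{u}$ there is a typo for $\mathcal{F}u$, and your conclusion $\mathcal{F}\left[(-\Delta)^{s}u\right](\xi)=|\xi|^{2s}\hat{u}(\xi)$ followed by $\mathcal{F}^{-1}$ is the intended meaning. The only cosmetic points worth adding for completeness are that your Fubini estimate already shows $(-\Delta)^{s}u\in L^{1}(\mathbb{R}^{N})$, so its Fourier transform is defined pointwise, and that $|\xi|^{2s}\hat{u}\in L^{1}\cap L^{2}$ (continuity of $|\xi|^{2s}$ at the origin plus rapid decay of $\hat{u}$) makes the final inversion legitimate; neither affects the validity of the argument.
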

\section{Boundedness of the Riesz potential on  global Morrey space}
In this section, for  global Morrey spaces, we formulate  the Adams type Hardy-Littlewood-Sobolev inequality on $\G$, which will be used to obtain the Stein-Weiss-Adams  inequality in Section \ref{SWAinequality}.  Firstly, we show $M^{\lambda}_{p}(\G)$ to $M^{\lambda}_{p}(\G)$ boundedness of the Hardy-Littlewood maximal operator $M_{0}$. In our proofs, many of the tools of the Euclidean setting are generalized to the homogeneous group case. Nevertheless, there are substantive differences and some care must be taken to insure that those proofs still hold.
\begin{thm}[Boundedness of the maximal operator on $M^{\lambda}_{p}(\G)$]\label{MF}
Let $\G$ be a homogeneous group. Assume that $p>1$, $0<\lambda<Q$ and $u\in M_{p}^{\lambda}(\G)$. Then there exists a constant $C$ independent of $u$ such that
\begin{equation}
    \|M_{0}u\|_{M^{\lambda}_{p}(\G)}\leq C\|u\|_{M^{\lambda}_{p}(\G)}.
\end{equation}
\end{thm}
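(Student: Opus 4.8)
The plan is to adapt the classical Chiarenza--Frasca splitting argument for Euclidean Morrey spaces to the homogeneous group setting, the two essential inputs being the $L^{p}(\mathbb{G})$-boundedness of $M_{0}$ and the doubling estimate \eqref{ballmeasure}. First I would record that $\mathbb{G}$, equipped with a homogeneous quasi-norm and the Haar measure, is a space of homogeneous type: by Proposition \ref{prop_quasi_norm} we may assume $|\cdot|$ satisfies the triangle inequality \eqref{tri}, while \eqref{ballmeasure} yields the doubling property $|B(x,2r)|\le C|B(x,r)|$. Consequently the Hardy--Littlewood maximal theorem applies on $\mathbb{G}$, so that $M_{0}$ is bounded on $L^{p}(\mathbb{G})$ for every $p>1$; this $L^{p}$ bound is the global ingredient around which the whole proof is built.

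Next I would fix $x_{0}\in\mathbb{G}$ and $r>0$ and split $u=u_{1}+u_{2}$, where $u_{1}:=u\chi_{B(x_{0},2r)}$ and $u_{2}:=u\chi_{\mathbb{G}\setminus B(x_{0},2r)}$. Since $M_{0}u\le M_{0}u_{1}+M_{0}u_{2}$ pointwise, it suffices to estimate $r^{-\lambda}\int_{B(x_{0},r)}|M_{0}u_{i}|^{p}\,dy$ for $i=1,2$ and then take the supremum over $x_{0}$ and $r$. For the local part $u_{1}$ the $L^{p}$ maximal theorem gives directly
\[
\int_{B(x_{0},r)}|M_{0}u_{1}|^{p}\,dy \le \|M_{0}u_{1}\|_{L^{p}(\mathbb{G})}^{p} \le C\int_{B(x_{0},2r)}|u|^{p}\,dy \le C(2r)^{\lambda}\|u\|_{M^{\lambda}_{p}(\mathbb{G})}^{p},
\]
so that $r^{-\lambda}\int_{B(x_{0},r)}|M_{0}u_{1}|^{p}\,dy\le C\|u\|_{M^{\lambda}_{p}(\mathbb{G})}^{p}$.

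The heart of the argument is the tail part $u_{2}$, for which I would prove the pointwise bound, uniform in $y\in B(x_{0},r)$, namely $M_{0}u_{2}(y)\le C\,r^{(\lambda-Q)/p}\,\|u\|_{M^{\lambda}_{p}(\mathbb{G})}$. To this end, observe first that if a ball $B(y,\rho)$ meets the support of $u_{2}$ then the triangle inequality \eqref{tri} forces $\rho>r$, and for such $\rho$ one has $B(y,\rho)\subset B(x_{0},2\rho)$. Combining H\"older's inequality, the Morrey bound $\int_{B(x_{0},2\rho)}|u|^{p}\,dy\le(2\rho)^{\lambda}\|u\|_{M^{\lambda}_{p}(\mathbb{G})}^{p}$, and the measure estimate \eqref{ballmeasure} gives
\[
\frac{1}{|B(y,\rho)|}\int_{B(y,\rho)}|u_{2}|\,dy \le C\,\rho^{-Q}\cdot\rho^{\lambda/p}\cdot\rho^{Q/p'}\,\|u\|_{M^{\lambda}_{p}(\mathbb{G})} = C\,\rho^{(\lambda-Q)/p}\,\|u\|_{M^{\lambda}_{p}(\mathbb{G})}.
\]
Since $\lambda<Q$ the exponent is negative, so the supremum over $\rho>r$ is attained in the limit $\rho\to r^{+}$, yielding the claimed uniform bound. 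Integrating this constant over $B(x_{0},r)$ and invoking \eqref{ballmeasure} once more gives $r^{-\lambda}\int_{B(x_{0},r)}|M_{0}u_{2}|^{p}\,dy\le C\,r^{-\lambda}\cdot r^{Q}\cdot r^{\lambda-Q}\|u\|_{M^{\lambda}_{p}(\mathbb{G})}^{p}=C\|u\|_{M^{\lambda}_{p}(\mathbb{G})}^{p}$, and adding the two contributions completes the proof.

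I expect the main obstacle to be the bookkeeping in the tail estimate: one must verify that only balls of radius $\rho>r$ contribute, and that the homogeneous-group scaling combines with \eqref{ballmeasure} to produce a power $\rho^{(\lambda-Q)/p}$ whose negativity is exactly the condition $\lambda<Q$. The remaining subtlety is the justification of the $L^{p}$-maximal theorem on $\mathbb{G}$, but this needs only the doubling property \eqref{ballmeasure} together with a Vitali-type covering lemma, both of which are available on every homogeneous group, so no genuinely new difficulty arises beyond adapting the Euclidean constants to the quasi-norm via Proposition \ref{prop_quasi_norm}.
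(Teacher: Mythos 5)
Your proof is correct and takes essentially the same route as the paper's: the same splitting $u=u_{1}+u_{2}$ at the ball $B(x_{0},2r)$, the $L^{p}(\mathbb{G})$-boundedness of $M_{0}$ for the local part, and H\"older's inequality combined with the observation that any ball $B(y,\rho)$ meeting $\operatorname{supp}u_{2}$ must have $\rho>r$ for the tail part. The only cosmetic differences are that you justify the $L^{p}$ maximal theorem through the space-of-homogeneous-type framework where the paper cites \cite{RSYm}, and that your enlargement $B(y,\rho)\subset B(x_{0},2\rho)$ is not needed, since the global Morrey norm already takes a supremum over all centers.
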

\begin{proof}
Let us set $u(x)=u_{1}(x)+u_{2}(x)$  such that for  $x_{0}\in \G$,
\begin{equation}\label{u1}
 u_{1}(x):=
 \begin{cases}
  u(x),\,\,\,\,\,|x_{0}^{-1}x|\leq 2r,\\
  0,\,\,\,\,\text{otherwise},
 \end{cases}
\end{equation}
and
\begin{equation}\label{u2}
 u_{2}(x):=
 \begin{cases}
  u(x),\,\,\,\,\,|x_{0}^{-1}x|> 2r,\\
  0,\,\,\,\,\text{otherwise}.
 \end{cases}
\end{equation}
Hence, we get
\begin{equation}\label{Jobsh}
\begin{split}
    \int_{B(x_{0},r)}|M_{0}u(x)|^{p}dx&
\leq C\int_{B(x_{0},r)}|M_{0} u_{1}(x)|^{p}dx+C\int_{B(x_{0},r)}|M_{0} u_{2}(x)|^{p}dx\\&=C(J_{1}+J_{2}),
    \end{split}
\end{equation}
where $$M_{0}u(x)=\sup_{\rho>0}\frac{1}{|B(x,\rho)|}\int_{B(x,\rho)}|u(y)|dy.$$
On the one hand, by using the boundedness of $M_{0}:L^{p}(\G)\rightarrow L^{p}(\G)$ with $p>1$ (see \cite[Theorem 3.1]{RSYm}), we have
\begin{equation*}
    J_{1}=\int_{B(x_{0},r)}|M_{0}u_{1}(x)|^{p}dx\leq \int_{\G}|M_{0}u_{1}(x)|^{p}dx\leq C\int_{B(x_{0},2r)}|u(y)|^{p}dy\leq Cr^{\lambda}\|u\|^{p}_{M^{\lambda}_{p}(\G)}.
\end{equation*}
On the other hand, by using H\"{o}lder's inequality and \eqref{ballmeasure}, we compute
\begin{equation*}
    \begin{split}
        \frac{1}{|B(x,\rho)|}\int_{B(x,\rho)}|u_{2}(y)|dy&\leq C\rho^{-Q}\int_{B(x,\rho)\cap \left(\G\setminus B(x_{0},2r)\right)}|u(y)|dy\\&
        \leq C\rho^{-Q}\int_{B(x,\rho)}|u(y)|dy\\&
        \leq C \rho^{-Q+\frac{Q}{p'}}\left(\int_{B(x,\rho)}|u(y)|^{p}dy\right)^{\frac{1}{p}}\\&
        = C \rho^{-\frac{Q-\lambda}{p}}\left(\frac{1}{\rho^{\lambda}}\int_{B(x,\rho)}|u(y)|^{p}dy\right)^{\frac{1}{p}}.
    \end{split}
\end{equation*}
By using \eqref{u2}, $|y^{-1}x|\leq \rho$ and $|x^{-1}_{0}x|\leq r$ (in \eqref{Jobsh}), we have
\begin{equation*}
    2r<|y^{-1}x_{0}|=|y^{-1}xx^{-1}x_{0}|\leq |y^{-1}x|+|x^{-1}_{0}x|\leq \rho+r,
\end{equation*}
that is, $r\leq \rho$, we arrive at
\begin{equation*}
\begin{split}
     \frac{1}{|B(x,\rho)|}\int_{B(x,\rho)}|u_{2}(x)|dx&\leq C \rho^{-\frac{Q-\lambda}{p}}\left(\frac{1}{\rho^{\lambda}}\int_{B(x,\rho)}|u(y)|^{p}dy\right)^{\frac{1}{p}}\\&
  \leq C r^{-\frac{Q-\lambda}{p}}\|u\|_{M^{\lambda}_{p}(\G)}.
\end{split}
\end{equation*}
Hence, we get
\begin{equation*}
    M_{0}u_{2}(x)=\sup_{\rho>0}\frac{1}{|B(x,\rho)|}\int_{B(x,\rho)}|u_{2}(y)|dy\leq C r^{-\frac{Q-\lambda}{p}}\|u\|_{M^{\lambda}_{p}(\G)}.
\end{equation*}
It yields
\begin{equation*}
\begin{split}
    J_{2}=\int_{|x_{0}^{-1}x|<r}|M_{0} u_{2}(x)|^{p}dx\leq C\rho^{-Q+\lambda}\|u\|^{p}_{M^{\lambda}_{p}(\G)}\int_{|x_{0}^{-1}x|<r}dx\leq Cr^{\lambda}\|u\|^{p}_{M^{\lambda}_{p}(\G)}.
\end{split}
\end{equation*}
Finally,  we obtain
\begin{equation}\label{bounhl}
\int_{B(x_{0},r)}|M_{0}u(x)|^{p}dx\leq C(J_{1}+J_{2})\leq Cr^{\lambda}\|u\|^{p}_{M^{\lambda}_{p}(\G)},
\end{equation}
completing the proof.
\end{proof}
\begin{thm}[Adams type Hardy-Littlewood-Sobolev inequality]\label{stein-weiss3}
Let $\mathbb{G}$ be a homogeneous group of homogeneous dimension $Q$ and let $|\cdot|$ be a quasi-norm on $\mathbb{G}$.
Let  $0<\gamma<Q$, $1<p<\frac{Q}{\gamma}$, $1<p<q<\infty$, $0<\lambda<Q-\gamma p$ and $\frac{1}{q}=\frac{1}{p}-\frac{\gamma}{Q-\lambda}$, where $\frac{1}{p}+\frac{1}{p'}=1$ and $\frac{1}{q}+\frac{1}{q'}=1$. Then, we have
\begin{equation}\label{HLS}
\|I_{\gamma}u\|_{M^{\lambda}_{q}(\G)}\leq C\|u\|_{M^{\lambda}_{p}(\G)},
\end{equation}
where $C$ is a positive constant independent of $u$.
\end{thm}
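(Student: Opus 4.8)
The plan is to follow the classical Adams argument, adapted to the homogeneous group setting: split the Riesz potential at a free radius $\delta>0$, estimate the two pieces separately, and optimize in $\delta$ at the end. By Proposition \ref{prop_quasi_norm} we may assume that $|\cdot|$ satisfies the triangle inequality \eqref{tri}, which legitimizes the dyadic manipulations below. Fixing $x\in\G$, I would write
\[
I_\gamma u(x)=\int_{|y^{-1}x|\le\delta}|y^{-1}x|^{\gamma-Q}u(y)\,dy+\int_{|y^{-1}x|>\delta}|y^{-1}x|^{\gamma-Q}u(y)\,dy=:A_\delta(x)+B_\delta(x).
\]

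For the near part $A_\delta$ I would decompose dyadically over the annuli $2^{-k-1}\delta<|y^{-1}x|\le 2^{-k}\delta$. On each annulus $|y^{-1}x|^{\gamma-Q}\approx(2^{-k}\delta)^{\gamma-Q}$, and using the volume estimate \eqref{ballmeasure} together with the definition of $M_0$, the $k$th piece is bounded by $C(2^{-k}\delta)^\gamma M_0u(x)$. Summing the geometric series, which converges since $\gamma>0$, yields the pointwise bound $|A_\delta(x)|\le C\delta^\gamma M_0u(x)$. For the far part $B_\delta$ I would again decompose over annuli $2^k\delta<|y^{-1}x|\le 2^{k+1}\delta$, apply H\"older's inequality with exponents $p,p'$ on each annulus, and invoke the Morrey norm to control $\bigl(\int_{B(x,2^{k+1}\delta)}|u|^p\bigr)^{1/p}$ by $C(2^k\delta)^{\lambda/p}\|u\|_{M^\lambda_p(\G)}$. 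Collecting powers of $2^k\delta$, the exponent of the $k$th term equals $\gamma-(Q-\lambda)/p$; here the hypothesis $\lambda<Q-\gamma p$ is precisely what makes this exponent negative, so the series converges to give $|B_\delta(x)|\le C\delta^{\gamma-(Q-\lambda)/p}\|u\|_{M^\lambda_p(\G)}$. Thus
\[
|I_\gamma u(x)|\le C\Bigl(\delta^\gamma M_0u(x)+\delta^{\gamma-(Q-\lambda)/p}\|u\|_{M^\lambda_p(\G)}\Bigr),\qquad\forall\,\delta>0.
\]

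The heart of the argument is the optimization over $\delta$. Choosing $\delta=\bigl(\|u\|_{M^\lambda_p(\G)}/M_0u(x)\bigr)^{p/(Q-\lambda)}$ to balance the two terms, and using the relation $p/q=1-\gamma p/(Q-\lambda)$ that follows from $\frac1q=\frac1p-\frac{\gamma}{Q-\lambda}$, gives the pointwise inequality
\[
|I_\gamma u(x)|\le C\,\|u\|_{M^\lambda_p(\G)}^{\,1-p/q}\bigl(M_0u(x)\bigr)^{p/q}.
\]
Raising this to the power $q$, integrating over an arbitrary quasi-ball $B(x_0,R)$ and multiplying by $R^{-\lambda}$ turns the right-hand side into $C\|u\|_{M^\lambda_p(\G)}^{\,q-p}\|M_0u\|^p_{M^\lambda_p(\G)}$, since $q(1-p/q)=q-p$ and $q\cdot(p/q)=p$. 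Finally Theorem \ref{MF} bounds $\|M_0u\|_{M^\lambda_p(\G)}$ by $C\|u\|_{M^\lambda_p(\G)}$, and taking $q$th roots yields the claimed estimate \eqref{HLS}.

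I expect the main obstacle to be the bookkeeping of exponents in the two dyadic sums and checking that the hypotheses are exactly what is needed: $\lambda<Q-\gamma p$ for convergence of the far sum, and the defining relation for $q$ for the clean exponent $p/q$ after optimization. The group-theoretic input, namely the availability of a quasi-norm obeying \eqref{tri} and the volume comparison \eqref{ballmeasure}, replaces the Euclidean computations but introduces no essential difficulty beyond keeping track of the comparability constants.
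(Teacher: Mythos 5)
Your proposal is correct and follows essentially the same route as the paper: Hedberg's trick with a dyadic splitting of $I_\gamma u$ at a free radius, the near part controlled by $\delta^{\gamma}M_0u(x)$, the far part by $\delta^{\gamma-(Q-\lambda)/p}\|u\|_{M^\lambda_p(\G)}$ via H\"older and the Morrey norm, optimization in $\delta$, and finally Theorem \ref{MF} for the maximal operator. The only cosmetic difference is that the paper phrases the far-part bound through the fractional maximal operator $M_{\frac{Q-\lambda}{Qp}}$ before estimating it by $\|u\|_{M^\lambda_p(\G)}$, a step you inline by balancing directly against the Morrey norm; the resulting pointwise estimate and conclusion are identical.
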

\begin{rem}
In the case $\lambda=0$, Theorem \ref{stein-weiss3} implies the Hardy-Littlewood-Sobolev inequality on homogeneous Lie groups for Lebesgue spaces, which was investigated in \cite{KRS}.
\end{rem}
\begin{rem}
In the Abelian (Euclidean) case ${\mathbb G}=(\mathbb R^{N},+)$, that is, for $Q=N$ and $|\cdot|=|\cdot|_{E}$ ($|\cdot|_{E}$ is the Euclidean distance),  Theorem \ref{stein-weiss3} covers the classical result from \cite{A}.

\end{rem}

\begin{proof}
The proof of this theorem is based  on Hedberg's trick. Let us decompose the Riesz potential operator in the following form:
\begin{equation*}
\begin{split}
    |I_{\gamma}u(x)|&\leq \int_{\G}|y^{-1}x|^{\gamma-Q}|u(y)|dy\\&
    =\int_{|y^{-1}x|\leq \rho}|y^{-1}x|^{\gamma-Q}|u(y)|dy+\int_{|y^{-1}x|> \rho}|y^{-1}x|^{\gamma-Q}|u(y)|dy\\&
    =J_{1}(x)+J_{2}(x).
\end{split}
\end{equation*}
Firstly, let us consider $J_{1}(x)$. A straightforward computation gives
\begin{equation*}
    \begin{split}
     J_{1}(x)&=\int_{|y^{-1}x|\leq \rho}|y^{-1}x|^{\gamma-Q}|u(y)|dy\\&
     =\sum_{k=-\infty}^{0}\int_{2^{k-1}\rho\leq|y^{-1}x|\leq 2^{k}\rho}|y^{-1}x|^{\gamma-Q}|u(y)|dy\\&
     \stackrel{\gamma< Q}\leq \sum_{k=-\infty}^{0}(2^{k-1}\rho)^{\gamma-Q}\int_{2^{k-1}\rho\leq|y^{-1}x|\leq 2^{k}\rho}|u(y)|dy\\&
     \leq \sum_{k=-\infty}^{0}(2^{k-1}\rho)^{\gamma-Q}\int_{|y^{-1}x|\leq 2^{k}\rho}|u(y)|dy\\&
     =\sum_{k=-\infty}^{0}(2^{k-1}\rho)^{\gamma-Q}\frac{|B(x,2^{k}\rho)|}{|B(x,2^{k}\rho)|}\int_{|y^{-1}x|\leq 2^{k}\rho}|u(y)|dy\\&
     \leq C\rho^{\gamma-Q}\left(\sum_{k=-\infty}^{0}(2^{k})^{\gamma-Q}|B(x,2^{k}\rho)|\right)\left(M_{0}u\right)(x)
      \end{split}
\end{equation*}
\begin{equation*}
    \begin{split}
     &\stackrel{\eqref{ballmeasure}}\leq C\rho^{\gamma-Q}\left(\sum_{k=-\infty}^{0}(2^{k})^{\gamma-Q}(2^{k}\rho)^{Q}\right)\left(M_{0}u\right)(x)\\&
     =C\rho^{\gamma}\left(\sum_{k=-\infty}^{0}(2^{k})^{\gamma}\right)\left(M_{0}u\right)(x)\\&
     \stackrel{\gamma>0}\leq C \rho^{\gamma}\left(M_{0}u\right)(x),
    \end{split}
\end{equation*}
and similarly, for $J_{2}(x)$, one has
\begin{equation*}
    \begin{split}
     J_{2}(x)&=\int_{|y^{-1}x|> \rho}|y^{-1}x|^{\gamma-Q}|u(y)|dy\\&
     =\sum^{+\infty}_{k=1}\int_{2^{k-1}\rho\leq|y^{-1}x|\leq 2^{k}\rho}|y^{-1}x|^{\gamma-Q}|u(y)|dy\\&
   \stackrel{\gamma< Q}\leq C\sum^{+\infty}_{k=1}(2^{k}\rho)^{\gamma-Q}\int_{2^{k-1}\rho\leq|y^{-1}x|\leq 2^{k}\rho}|u(y)|dy\\&
\leq C\sum^{+\infty}_{k=1}(2^{k}\rho)^{\gamma-Q}\frac{|B(x,2^{k}\rho)|^{1-\frac{Q-\lambda}{Qp}}}{|B(x,2^{k}\rho)|^{1-\frac{Q-\lambda}{Qp}}}\int_{|y^{-1}x|\leq 2^{k}\rho}|u(y)|dy\\&
   \leq C\left(\sum^{+\infty}_{k=1}(2^{k}\rho)^{\gamma-Q}(2^{k}\rho)^{Q-\frac{Q-\lambda}{p}}\right)\left(M_{\frac{Q-\lambda}{Qp}}u \right)(x)\\&
   =C\rho^{\gamma-\frac{Q-\lambda}{p}}\left(\sum^{+\infty}_{k=1}(2^{k})^{\gamma-\frac{Q-\lambda}{p}}\right)\left(M_{\frac{Q-\lambda}{Qp}}u \right)(x)\\&
   \stackrel{\lambda<Q-\gamma p}\leq
   C\rho^{\gamma-\frac{Q-\lambda}{p}}\left(M_{\frac{Q-\lambda}{Qp}}u \right)(x).
    \end{split}
\end{equation*}
By choosing $\rho=\left(\frac{M_{\frac{Q-\lambda}{Qp}}u(x)}{M_{0}u(x)}\right)^{\frac{p}{Q-\lambda}}$, we get
\begin{equation*}
    |I_{\gamma}u(x)|\leq J_{1}+J_{2}=C \left(M_{\frac{Q-\lambda}{Qp}}u(x) \right)^{\frac{p \gamma}{Q-\lambda}}\left(M_{0}u(x) \right)^{1-\frac{p \gamma}{Q-\lambda}}.
\end{equation*}
Let us estimate the following term:
\begin{equation*}
    \begin{split}
     \frac{1}{|B(x,r)|^{1-\frac{Q-\lambda}{Qp}}}\int_{B(x,r)}|u(y)|dy\leq Cr^{-Q+\frac{Q-\lambda}{p}+\frac{Q}{p'}}\left(\int_{B(x,r)}|u(y)|^{p}dy\right)^{\frac{1}{p}}\leq C\|u\|_{M^{\lambda}_{p}(\G)},
    \end{split}
\end{equation*}
that is,
\begin{equation*}
     M_{\frac{Q-\lambda}{Qp}}u(x) \leq C\|u\|_{M^{\lambda}_{p}(\G)}.
\end{equation*}
Thus, we obtain the pointwise estimate:
\begin{equation*}
    |I_{\gamma}u(x)|\leq \left(M_{0}u(x) \right)^{1-\frac{p \gamma}{Q-\lambda}}\|u\|^{\frac{p\gamma}{Q-\lambda}}_{M^{\lambda}_{p}(\G)}.
\end{equation*}
By using the boundedness of $M_{0}:M_{p}^{\lambda}(\G)\rightarrow M_{p}^{\lambda}(\G)$ from Theorem \ref{MF}, we have
\begin{equation*}
\begin{split}
      \int_{B(x,r)}|I_{\gamma}u(x)|^{q}dx&\leq C\|u\|^{\frac{qp\gamma}{Q-\lambda}}_{M^{\lambda}_{p}(\G)}\int_{B(x,r)}\left(M_{0}u(x) \right)^{q-\frac{qp \gamma}{Q-\lambda}}dx\\&
      =C\|u\|^{\frac{qp\gamma}{Q-\lambda}}_{M^{\lambda}_{p}(\G)}\int_{B(x,r)}\left(M_{0}u(x) \right)^{p}dx\\&
      \stackrel{\lambda<Q-\gamma p}\leq Cr^{\lambda}\|u\|^{\frac{qp\gamma}{Q-\lambda}+p}_{M^{\lambda}_{p}(\G)}\\&
      \stackrel{\frac{1}{q}=\frac{1}{p}-\frac{\gamma}{Q-\lambda}}=Cr^{\lambda}\|u\|^{q}_{M^{\lambda}_{p}(\G)},
\end{split}
\end{equation*}
completing the proof.
\end{proof}

\section{Adams type Stein-Weiss inequality} \label{SWAinequality}

Here we state the Stein-Weiss-Adams inequality on the global  Morrey space.

\begin{thm}[Adams type Stein-Weiss inequality]\label{stein-weissthm}
Let $\mathbb{G}$ be a homogeneous group of homogeneous dimension $Q$ such that $0<\gamma <Q$ and let $|\cdot|$ be a quasi-norm on $\mathbb{G}$.
Let  $\alpha,\beta\in \mathbb{R}$, $0\leq \alpha+\beta\leq\gamma<Q$, $1<p<\frac{Q}{\gamma-\alpha-\beta}$, $\frac{1}{q}=\frac{1}{p}-\frac{\gamma-\alpha-\beta}{Q-\lambda}$,  $\alpha<\frac{Q}{p'}$, $\beta<\frac{Q-\lambda}{q}$ and $0<\lambda<Q-(\gamma-\alpha-\beta)p$. Then, for all  $|\cdot|^{\alpha}u\in M^{\lambda}_{p}(\G)$, we have
\begin{equation}\label{stein-weiss}
\||\cdot|^{-\beta}I_{\gamma}u\|_{M^{\lambda}_{q}(\G)}\leq C\||\cdot|^{\alpha}u\|_{M^{\lambda}_{p}(\G)},
\end{equation}
where $C$ is a positive constant independent of $u$.
\end{thm}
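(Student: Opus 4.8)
The plan is to imitate the Hedberg-type argument that proved Theorem~\ref{stein-weiss3}, but now carrying the two origin-centred weights through the dyadic decomposition. First I would set $v:=|\cdot|^{\alpha}|u|$, so that $\|v\|_{M^{\lambda}_{p}(\G)}=\||\cdot|^{\alpha}u\|_{M^{\lambda}_{p}(\G)}$ is exactly the quantity we control, and write $|u(y)|=|y|^{-\alpha}v(y)$ to obtain
$$|x|^{-\beta}|I_{\gamma}u(x)|\le |x|^{-\beta}\int_{\G}|y^{-1}x|^{\gamma-Q}|y|^{-\alpha}v(y)\,dy.$$
The target is the pointwise inequality, the exact analogue of the one in Theorem~\ref{stein-weiss3} with the order $\gamma$ replaced by the \emph{effective order} $\gamma-\alpha-\beta$ forced by the exponent relation $\frac1q=\frac1p-\frac{\gamma-\alpha-\beta}{Q-\lambda}$, namely
$$|x|^{-\beta}|I_{\gamma}u(x)|\le C\,\bigl(M_{0}v(x)\bigr)^{1-\theta}\,\|v\|_{M^{\lambda}_{p}(\G)}^{\theta},\qquad \theta:=\frac{p(\gamma-\alpha-\beta)}{Q-\lambda}.$$

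To reach it I split at a radius $\rho>0$ into $J_{1}(x)$ (over $|y^{-1}x|\le\rho$) and $J_{2}(x)$ (over $|y^{-1}x|>\rho$) and run the dyadic-shell estimates of Theorem~\ref{stein-weiss3}, the one new ingredient being the control of the weight product $|x|^{-\beta}|y|^{-\alpha}$ on each shell. Using the triangle inequality of Proposition~\ref{prop_quasi_norm} to compare $|x|$, $|y|$ and $|y^{-1}x|$, I would separate the local regime, where the shell radius is $\lesssim|x|$ and hence $|y|\approx|x|$, from the regime where the shell reaches the origin. In the local regime one pulls out $|x|^{-\beta}|y|^{-\alpha}\approx|x|^{-\alpha-\beta}\le C\rho^{-\alpha-\beta}$, which is legitimate precisely because $\alpha+\beta\ge0$, and the remaining shell sum of $|y^{-1}x|^{\gamma-Q}v(y)$ reproduces a factor $\rho^{\gamma-\alpha-\beta}M_{0}v(x)$ in $J_{1}$ and a factor $\rho^{\gamma-\alpha-\beta-\frac{Q-\lambda}{p}}M_{\frac{Q-\lambda}{Qp}}v(x)$ in $J_{2}$; the far series converges exactly because $\lambda<Q-(\gamma-\alpha-\beta)p$. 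In the near-origin regime the singularity of $|y|^{-\alpha}$ is integrable precisely because $\alpha<\frac{Q}{p'}$, and the residual power of $|x|^{-\beta}$ is absorbed using $\beta<\frac{Q-\lambda}{q}$. Bounding $M_{\frac{Q-\lambda}{Qp}}v(x)\le C\|v\|_{M^{\lambda}_{p}(\G)}$ by Hölder's inequality and the Morrey norm (as in Theorem~\ref{stein-weiss3}) and optimising $\rho=\left(\frac{M_{\frac{Q-\lambda}{Qp}}v(x)}{M_{0}v(x)}\right)^{\frac{p}{Q-\lambda}}$ then yields the displayed pointwise bound.

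To finish I integrate the $q$-th power of $|x|^{-\beta}|I_{\gamma}u|$ over a quasi-ball $B(x_{0},r)$. The exponent identity gives $q(1-\theta)=p$, so $\bigl(M_{0}v\bigr)^{q(1-\theta)}=\bigl(M_{0}v\bigr)^{p}$, and the $M^{\lambda}_{p}(\G)\to M^{\lambda}_{p}(\G)$ boundedness of $M_{0}$ from Theorem~\ref{MF} yields
$$\int_{B(x_{0},r)}\bigl(|x|^{-\beta}|I_{\gamma}u(x)|\bigr)^{q}\,dx\le C\,r^{\lambda}\|v\|_{M^{\lambda}_{p}(\G)}^{q},$$
which is the claimed estimate after taking the supremum over $x_{0}$ and $r>0$ and recalling $\|v\|_{M^{\lambda}_{p}(\G)}=\||\cdot|^{\alpha}u\|_{M^{\lambda}_{p}(\G)}$.

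The main obstacle is the weight step. Unlike in Theorem~\ref{stein-weiss3}, the weights $|x|^{-\beta}$ and $|y|^{-\alpha}$ are anchored at the identity rather than at the sliding point $x$, so the dyadic shells $\{2^{k-1}\rho<|y^{-1}x|\le 2^{k}\rho\}$ are no longer weight-homogeneous and the clean shell-by-shell estimates of the unweighted proof do not transfer verbatim. Making them uniform forces one to peel off the contribution near the origin and to verify that each of the four constraints $\alpha+\beta\ge0$, $\alpha+\beta\le\gamma$, $\alpha<\frac{Q}{p'}$ and $\beta<\frac{Q-\lambda}{q}$ is exactly what secures both the pointwise weight comparison and the convergence of the associated geometric series; in particular $\alpha+\beta\le\gamma$ keeps the effective order $\gamma-\alpha-\beta$ nonnegative so that the near sum still sums to a finite multiple of $\rho^{\gamma-\alpha-\beta}$.
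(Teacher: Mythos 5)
There is a genuine gap: the pointwise Hedberg bound you aim for,
$|x|^{-\beta}|I_{\gamma}u(x)|\le C\,(M_{0}v(x))^{1-\theta}\|v\|_{M^{\lambda}_{p}(\G)}^{\theta}$,
is false whenever $\beta>0$. Take $\G=\mathbb{R}^{N}$, $\alpha=0$, and $u\ge0$ a fixed bump supported in the annulus $\{1\le|y|\le2\}$. As $x\to0$ the left-hand side behaves like $|x|^{-\beta}\int|y|^{\gamma-Q}u(y)\,dy\sim c\,|x|^{-\beta}\to\infty$, while $M_{0}v(x)$ stays bounded near the origin and $\|v\|_{M^{\lambda}_{p}(\G)}$ is a fixed constant, so the right-hand side is bounded. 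No rearrangement of your shell estimates can repair this, because the failure is structural: with weights anchored at the identity rather than at $x$, the quantity $|x|^{-\beta}|I_{\gamma}u(x)|$ genuinely carries a singularity of size $|x|^{(\lambda-Q)/q}$ at the origin (substitute the exponent relation into $-\beta+\gamma-\alpha-\frac{Q-\lambda}{p}$), and $\frac{\lambda-Q}{q}<0$. This singularity is not pointwise dominated by any power of $M_{0}v$; it is only neutralized \emph{after} integration over a quasi-ball, where one needs the two-case estimate $\int_{|z^{-1}x|\le r}|x|^{\lambda-Q}dx\le Cr^{\lambda}$ (the paper's \eqref{ocennashar}, proved separately for $|z|\le\theta r$ and $|z|\ge\theta r$ with $\theta>1$). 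Your final step, which feeds only powers of $M_{0}v$ into Theorem \ref{MF}, has no mechanism to produce this; the phrase ``the residual power of $|x|^{-\beta}$ is absorbed'' is exactly the point at which the argument breaks.

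This is why the paper's proof abandons a global Hedberg bound and instead splits by comparing $|y|$ with $|x|$: on the zones $|y|<\frac{|x|}{2}$ and $|y|>2|x|$ it proves the pointwise bounds $J_{i}(x)\le C|x|^{\gamma-\alpha-\frac{Q-\lambda}{p}}\||\cdot|^{\alpha}u\|_{M^{\lambda}_{p}(\G)}$ (using $\alpha<\frac{Q}{p'}$ and $\beta<\frac{Q-\lambda}{q}$ respectively, together with the embedding $M^{\lambda}_{p}(\G)\subset LM^{\lambda}_{p}(\G)$), and then integrates $|x|^{-\beta q}J_{i}^{q}(x)=C|x|^{\lambda-Q}$ against \eqref{ocennashar}. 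Only on the middle zone $\frac{|x|}{2}\le|y|\le2|x|$, where $|x|^{-\beta}|y|^{-\alpha}$ is genuinely comparable to $|y^{-1}x|^{-\alpha-\beta}$, does Hedberg's trick enter at all --- through the unweighted Theorem \ref{stein-weiss3} at effective order $\gamma_{1}=\gamma-\alpha-\beta$. Note also that your optimization silently assumes $\theta>0$: the admissible endpoint $\alpha+\beta=\gamma$ (so $p=q$ and $\theta=0$) degenerates, and the paper handles it by a separate argument combining Young's convolution inequality on dyadic annuli (for $|z|\le\theta r$) with the maximal-operator bound of Theorem \ref{MF} (for $|z|>\theta r$). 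Your proof would need to be restructured along these lines rather than patched at the pointwise estimate.
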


\begin{rem}
 Inequality \eqref{stein-weiss} with $\alpha=\beta=0$ gives the Adams type Hardy-Littlewood-Sobolev inequality \eqref{HLS}.
\end{rem}
\begin{rem}\label{remeuc}
To the best of our knowledge, in the Abelian (Euclidean) case ${\mathbb G}=(\mathbb R^{N},+)$, that is, with $Q=N$ and $|\cdot|=|\cdot|_{E}$ ($|\cdot|_{E}$ is the Euclidean distance), the inequality \eqref{stein-weiss} is already new.
\end{rem}
\begin{proof}[Proof of Theorem \ref{stein-weissthm}]
In view of Proposition \ref{prop_quasi_norm}, without loss of generality, one can assume that the quasi-norm $|\cdot|$ is a norm. Let us decompose the Riesz potential in the following way:
\begin{equation*}
\begin{split}
    I_{\gamma}u(x)&= \int_{\G}|y^{-1}x|^{\gamma-Q}u(y)dy\\&
    =\int_{|y|<\frac{|x|}{2}}|y^{-1}x|^{\gamma-Q}u(y)dy+\int_{\frac{|x|}{2}\leq|y|\leq 2|x|}|y^{-1}x|^{\gamma-Q}u(y)dy+\int_{2|x|<|y|}|y^{-1}x|^{\gamma-Q}u(y)dy,
\end{split}
\end{equation*}
and hence, we get
\begin{equation*}
\begin{split}
    |I_{\gamma}u(x)|&\leq  \int_{|y|<\frac{|x|}{2}}|y^{-1}x|^{\gamma-Q}|u(y)|dy+\int_{\frac{|x|}{2}\leq|y|\leq 2|x|}|y^{-1}x|^{\gamma-Q}|u(y)|dy\\&
    +\int_{2|x|<|y|}|y^{-1}x|^{\gamma-Q}|u(y)|dy.
\end{split}
\end{equation*}
Now we divide the proof into 3 steps.

\textbf{Step 1.} Let us consider the first term $J_{1}(x):=\int_{|y|<\frac{|x|}{2}}|y^{-1}x|^{\gamma-Q}|u(y)|dy$.
Firstly, we calculate:
\begin{equation}\label{ocen1}
    \begin{split}
        \int_{|y|<|x|}|u(y)|dy&=\sum_{k=0}^{\infty}\int_{2^{-k-1}|x|\leq |y|\leq 2^{-k}|x|}|u(y)|dy\\&
        =\sum_{k=0}^{\infty}\int_{2^{-k-1}|x|\leq |y|\leq 2^{-k}|x|}|u(y)||y|^{\alpha}|y|^{-\alpha}dy\\&
        \leq C \sum_{k=0}^{\infty}(2^{-k}|x|)^{-\alpha}\int_{2^{-k-1}|x|\leq |y|\leq 2^{-k}|x|}|u(y)||y|^{\alpha}dy\\&
        \leq C \sum_{k=0}^{\infty}(2^{-k}|x|)^{-\alpha}\int_{|y|\leq 2^{-k}|x|}|u(y)||y|^{\alpha}dy\\&
        \leq C\sum_{k=0}^{\infty}(2^{-k}|x|)^{-\alpha+\frac{Q}{p'}}\left(\int_{|y|\leq 2^{-k}|x|}|u(y)|^{p}|y|^{\alpha p}dy\right)^{\frac{1}{p}}\\&
        \leq C\sum_{k=0}^{\infty}(2^{-k}|x|)^{-\alpha+\frac{Q}{p'}+\frac{\lambda}{p}}\||\cdot|^{\alpha}u\|_{LM^{\lambda}_{p}(\G)}\\&
        \leq C|x|^{\frac{\lambda}{p}}\sum_{k=0}^{\infty}(2^{-k}|x|)^{-\alpha+\frac{Q}{p'}}\||\cdot|^{\alpha}u\|_{LM^{\lambda}_{p}(\G)}\\&
        \stackrel{\alpha<\frac{Q}{p'}}\leq C|x|^{-\alpha+Q-\frac{Q-\lambda}{p}}\||\cdot|^{\alpha }u\|_{LM^{\lambda}_{p}(\G)}.
    \end{split}
\end{equation}
Consider the following integral
\begin{equation*}
    \sup_{r>0}\frac{1}{r^{\lambda}}\int_{|y^{-1}x|<r}|u(y)|^{p}dy\leq \sup_{x\in \G}\sup_{r>0}\frac{1}{r^{\lambda}}\int_{|y^{-1}x|<r}|u(y)|^{p}dy=\|u\|^{p}_{M^{\lambda}_{p}(\G)},
\end{equation*}
and by taking $x=e$ where $e$ is an identity element of $\G$, we get
\begin{equation}\label{emd}
    \|u\|^{p}_{LM^{\lambda}_{p}(\G)}=\sup_{r>0}\frac{1}{r^{\lambda}}\int_{|y|<r}|u(y)|^{p}dy\leq \sup_{x\in \G}\sup_{r>0}\frac{1}{r^{\lambda}}\int_{|y^{-1}x|<r}|u(y)|^{p}dy=\|u\|^{p}_{M^{\lambda}_{p}(\G)}.
\end{equation}
That is, we have the embedding $M^{\lambda}_{p}(\G)\subset LM^{\lambda}_{p}(\G)$.
By combining \eqref{emd} with \eqref{ocen1}, we obtain
\begin{equation}\label{ocenwit}
\begin{split}
    \int_{|y|<|x|}|u(y)|dy&=\sum_{k=0}^{\infty}\int_{2^{-k-1}|x|\leq |y|\leq 2^{-k}|x|}|u(y)|dy\\&
    \leq C|x|^{-\alpha+Q-\frac{Q-\lambda}{p}}\||\cdot|^{\alpha }u\|_{LM^{\lambda}_{p}(\G)}\\&
    \leq C |x|^{-\alpha+Q-\frac{Q-\lambda}{p}}\||\cdot|^{\alpha }u\|_{M^{\lambda}_{p}(\G)}.
\end{split}
\end{equation}
By using Proposition \ref{prop_quasi_norm} and the properties of the (quasi-)norm with  $|y|<\frac{|x|}{2}$, we have
$$|x|\leq |y^{-1} x|+|y| \leq |y^{-1} x|+\frac{|x|}{2}.$$
Then for any $Q>\gamma>0$, it is clear that
$$2^{\gamma-Q}|x|^{\gamma-Q}\geq |y^{-1} x|^{\gamma-Q}.$$
By combining this with \eqref{ocenwit}, we have
\begin{equation*}
    \begin{split}
        J_{1}(x)&=\int_{|y|<\frac{|x|}{2}}|y^{-1}x|^{\gamma-Q}|u(y)|dy\\&
        \leq C |x|^{\gamma-Q}\int_{|y|<\frac{|x|}{2}}|u(y)|dy\\&
        \leq C |x|^{\gamma-Q}\int_{|y|<|x|}|u(y)|dy\\&
        \stackrel{\alpha<\frac{Q}{p'}}\leq C|x|^{\gamma-\alpha-\frac{Q-\lambda}{p}}\||\cdot|^{\alpha}u\|_{M^{\lambda}_{p}(\G)}.
    \end{split}
\end{equation*}
By using $\frac{1}{q}=\frac{1}{p}-\frac{\gamma-\alpha-\beta}{Q-\lambda}$ and \eqref{EQ:polar}, we obtain
\begin{equation}\label{okonocen}
    \begin{split}
      &\int_{|z^{-1}x|\leq r}|x|^{-\beta q}\left(\int_{|y|<\frac{|x|}{2}}|y^{-1}x|^{\gamma-Q}|u(y)|dy\right)^{q}dx=   \int_{|z^{-1}x|\leq r}|x|^{-\beta q}J^{q}_{1}(x)dx\\&
      \leq C\||\cdot|^{\alpha}u\|^{q}_{M^{\lambda}_{p}(\G)}\int_{|z^{-1}x|\leq r}|x|^{\left(-\beta+\gamma-\alpha-\frac{Q-\lambda}{p}\right) q}dx\\&
      =C\||\cdot|^{\alpha}u\|^{q}_{M^{\lambda}_{p}(\G)}\int_{|z^{-1}x|\leq r}|x|^{\lambda-Q}dx.
    \end{split}
\end{equation}
To estimate the above integral we consider two cases $|z|\leq \theta r$ and $|z|\geq \theta r,$ where $\theta>1$. Firstly, let us calculate the case $|z|\leq  \theta r$. With $|z^{-1}x|\leq r$  we have $|x|\leq |z^{-1}x|+|z|\leq (\theta +1)r$, that is, $B(z,r)\subset B(e,(\theta +1) r)$. By using this fact with polar decomposition \eqref{EQ:polar}, we get
\begin{equation*}
  \int_{|z^{-1}x|\leq r}|x|^{\lambda-Q}dx \leq  \int_{|x|\leq (\theta +1)r}|x|^{\lambda-Q}dx=C\int_{0}^{(\theta +1)r}t^{\lambda-Q}t^{Q-1}dt\leq Cr^{\lambda}.
\end{equation*}
Now we compute the case $|z|\geq \theta r$. Hence, we obtain $\theta r \leq |z|\leq  |z^{-1}x|+|x|\leq r+|x|$, that is, $|x|\geq (\theta-1)r$, where $\theta>1$. It implies
\begin{equation*}
    \int_{|z^{-1}x|\leq r}|x|^{\lambda-Q}dx\stackrel{\lambda<Q}\leq (\theta-1)^{\lambda-Q}r^{\lambda-Q}\int_{|z^{-1}x|\leq r}dx\leq Cr^{\lambda-Q}r^{Q}\leq Cr^{\lambda}.
\end{equation*}
By combining these facts, we have
\begin{equation}\label{ocennashar}
     \int_{|z^{-1}x|\leq r}|x|^{\lambda-Q}dx\leq Cr^{\lambda}.
\end{equation}
 Thus,  the estimate \eqref{okonocen} yields
\begin{equation*}
    \begin{split}
      \int_{|z^{-1}x|\leq r}|x|^{-\beta q}\left(\int_{|y|<\frac{|x|}{2}}|y^{-1}x|^{\gamma-Q}|u(y)|dy\right)^{q}dx&\leq C\||\cdot|^{\alpha}u\|^{q}_{M^{\lambda}_{p}(\G)}\int_{|z^{-1}x|\leq r}|x|^{\lambda-Q}dx\\&
      \leq Cr^{\lambda} \||\cdot|^{\alpha}u\|^{q}_{M^{\lambda}_{p}(\G)}.
    \end{split}
\end{equation*}
\textbf{Step 2.} Let us consider the last term $J_{3}(x):=\int_{2|x|<|y|}|y^{-1}x|^{\gamma-Q}|u(y)|dy$. Since $\beta<\frac{Q-\lambda}{q},$ we have
$$\frac{1}{q}=\frac{1}{p}-\frac{\gamma-\alpha-\beta}{Q-\lambda}<\frac{1}{p}+\frac{1}{q}-\frac{\gamma-\alpha}{Q-\lambda},$$
and hence
\begin{equation}\label{ocen7}
    \frac{Q-\lambda}{p}-\gamma+\alpha>0.
\end{equation}
In a similar way as in the previous step, consider the following integral:
\begin{equation}\label{ocen2}
    \begin{split}
        \int_{|y|>2|x|}|y|^{\gamma-Q}|u(y)|dy&\leq\sum_{k=0}^{\infty}\int_{2^{k-1}|x|\leq |y|\leq 2^{k}|x|}|y|^{\gamma-Q-\alpha}|u(y)||y|^{\alpha}dy\\&
        \leq C\sum_{k=0}^{\infty}(2^{k}|x|)^{\gamma-Q-\alpha}\int_{2^{k-1}|x|\leq |y|\leq 2^{k}|x|}|u(y)||y|^{\alpha}dy\\&
        \leq C\sum_{k=0}^{\infty}(2^{k}|x|)^{\gamma-Q-\alpha+\frac{Q}{p'}}\left(\frac{ (2^{k}|x|)^{\lambda}}{(2^{k}|x|)^{\lambda}}\int_{|y|\leq 2^{k}|x|}|u(y)|^{p}|y|^{\alpha p}dy\right)^{\frac{1}{p}}\\&
        \leq C\sum_{k=0}^{\infty}(2^{k}|x|)^{\gamma-\alpha-\frac{Q-\lambda}{p}}\||\cdot|^{\alpha}u\|_{LM^{\lambda}_{p}(\G)}\\&
        \stackrel{\eqref{ocen7}}\leq C|x|^{\gamma-\alpha-\frac{Q-\lambda}{p}}\||\cdot|^{\alpha}u\|_{LM^{\lambda}_{p}(\G)}\\&
        \stackrel{\eqref{emd}}\leq C|x|^{\gamma-\alpha-\frac{Q-\lambda}{p}}\||\cdot|^{\alpha}u\|_{M^{\lambda}_{p}(\G)}.
    \end{split}
\end{equation}
From $2|x|\leq |y|$,  we calculate
$$|y|=|y^{-1}|=|y^{-1} x x^{-1}|\leq |y^{-1} x|+|x|\leq |y^{-1} x|+\frac{|y|}{2},$$
that is,
$$\frac{|y|}{2}\leq |y^{-1} x|.$$
By combining it with \eqref{ocen2}, we obtain
\begin{equation*}
    \begin{split}
        J_{3}(x)=\int_{2|x|<|y|}|y^{-1}x|^{\gamma-Q}|u(y)|dy&\leq C \int_{|y|>2|x|}|y|^{\gamma-Q}|u(y)|dy\\&
        \leq C|x|^{\gamma-\alpha-\frac{Q-\lambda}{p}}\||\cdot|^{\alpha}u\|_{M^{\lambda}_{p}(\G)}.
    \end{split}
\end{equation*}
Hence, from $\frac{1}{q}=\frac{1}{p}-\frac{\gamma-\alpha-\beta}{Q-\lambda}$ and \eqref{ocennashar}, we establish
\begin{equation*}
    \begin{split}
        &\int_{|z^{-1}x|\leq r}|x|^{-\beta q}\left(\int_{2|x|<|y|}|y^{-1}x|^{\gamma-Q}|u(y)|dy\right)^{q}dx=   \int_{|z^{-1}x|\leq r}|x|^{-\beta q}J^{q}_{3}(x)dx\\&
        \leq C\||\cdot|^{\alpha}u\|^{q}_{M^{\lambda}_{p}(\G)}\int_{|z^{-1}x|\leq r}|x|^{\left(-\beta+\gamma-\alpha-\frac{Q-\lambda}{p}\right) q}dx\\&
        =C\||\cdot|^{\alpha}u\|^{q}_{M^{\lambda}_{p}(\G)}\int_{|z^{-1}x|\leq r}|x|^{\lambda-Q}dx\\&
        \stackrel{\eqref{ocennashar}}\leq Cr^{\lambda}\||\cdot|^{\alpha}u\|^{q}_{M^{\lambda}_{p}(\G)}.
    \end{split}
\end{equation*}

\textbf{Step 3.} In this step we consider the function $J_{2}(x):=\int_{\frac{|x|}{2}\leq|y|\leq 2|x|}|y^{-1}x|^{\gamma-Q}|u(y)|dy$. Let us first consider the case $\gamma>\alpha+\beta\geq 0.$ Then, by using $\frac{|x|}{2}\leq|y|\leq 2|x|$, we get
\begin{equation*}
    |y^{-1}x|^{\alpha+\beta}\leq C (|x|^{\alpha+\beta}+|y|^{\alpha+\beta})\leq C|y|^{\alpha+\beta}\leq C|x|^{\beta}|y|^{\alpha}.
\end{equation*}
Therefore, we have
\begin{equation*}
\begin{split}
    |x|^{-\beta}J_{2}(x)&=|x|^{-\beta}\int_{\frac{|x|}{2}\leq|y|\leq 2|x|}|y^{-1}x|^{\gamma-Q}|u(y)|dy\\&
    = \int_{\frac{|x|}{2}\leq|y|\leq 2|x|}|x|^{-\beta}|y^{-1}x|^{\gamma-Q}|y^{-1}x|^{-\alpha-\beta}|y^{-1}x|^{\alpha+\beta}|u(y)|dy \\&
    \leq C\int_{\frac{|x|}{2}\leq|y|\leq 2|x|}|x|^{-\beta}|y^{-1}x|^{\gamma-\alpha-\beta-Q}|x|^{\beta}|y|^{\alpha}|u(y)|dy\\&
    =C\int_{\frac{|x|}{2}\leq|y|\leq 2|x|}|y^{-1}x|^{\gamma-\alpha-\beta-Q}|\tilde{u}(y)|dy\\&
    \leq C\int_{\G}|y^{-1}x|^{\gamma_{1}-Q}|\tilde{u}(y)|dy,
\end{split}
\end{equation*}
where $\tilde{u}(x)=|x|^{\alpha}u(x)$ and $\gamma_{1}=\gamma-\alpha-\beta$. By assumption, we have $Q> \gamma \stackrel{\alpha+\beta\geq 0}\geq\gamma_{1}=\gamma-\alpha-\beta>0$ (that is $0<\gamma_{1}<Q$),  $0<\lambda<Q-\gamma_{1}p$ and $\frac{1}{q}=\frac{1}{p}-\frac{\gamma_{1}}{Q-\lambda}$, so one can apply Theorem \ref{stein-weiss3} to get
\begin{equation*}
\begin{split}
     \int_{|z^{-1}x|\leq r}|x|^{-\beta q}J^{q}_{2}(x)dx&\leq C\int_{|z^{-1}x|\leq r}\left(\int_{\G}|y^{-1}x|^{\gamma_{1}-Q}|\tilde{u}(y)|dy\right)^{q}dx\\&
    \leq Cr^{\lambda}\|\tilde{u}\|^{q}_{M^{\lambda}_{p}(\G)}\\&
    =Cr^{\lambda}\||\cdot|^{\alpha}u\|^{q}_{M^{\lambda}_{p}(\G)}.
\end{split}
\end{equation*}
Let us now focus on the case $\gamma=\alpha+\beta>0$. So, we have $p=q>1$. Firstly, consider two cases $|z|\leq \theta r$ and $|z|\geq \theta r$, where $\theta>1$.
Assume that $|z|\leq \theta r$ and $|z^{-1}x|\leq r$, then $|x|\leq |z^{-1}x|+|z|\leq(\theta+1)r$, that is, $B(z,r)\subset B(e,(\theta+1)r)$, where $\theta>1$. We compute
\begin{equation*}
    \begin{split}
        \int_{|z^{-1}x|\leq r}&|x|^{-\beta p}J^{p}_{2}(x)dx\leq \int_{|x|\leq (\theta+1)r}|x|^{-\beta p}J^{p}_{2}(x)dx\\&
        =\int_{|x|\leq (\theta+1)r}|x|^{-\beta p}\left(\int_{\frac{|x|}{2}\leq|y|\leq 2|x|}|y^{-1}x|^{\gamma-Q}|u(y)|dy\right)^{p}dx\\&
        \leq C\int_{|x|\leq (\theta+1)r}|x|^{-(\alpha+\beta) p}\left(\int_{\frac{|x|}{2}\leq|y|\leq 2|x|}|y^{-1}x|^{\gamma-Q}|\tilde{u}(y)|dy\right)^{p}dx\\&
        =C\int_{|x|\leq (\theta+1)r}|x|^{-\gamma p}\left(\int_{\frac{|x|}{2}\leq|y|\leq 2|x|}|y^{-1}x|^{\gamma-Q}|\tilde{u}(y)|dy\right)^{p}dx\\&
        =C\sum_{k=0}^{\infty}\int_{2^{-k-1}(\theta+1)r\leq |x|\leq 2^{-k}(\theta+1)r}|x|^{-\gamma p}\left(\int_{\frac{|x|}{2}\leq|y|\leq 2|x|}|y^{-1}x|^{\gamma-Q}|\tilde{u}(y)|dy\right)^{p}dx,
    \end{split}
\end{equation*}
where $\tilde{u}(x)=|x|^{\alpha}u(x)$.
From $\frac{|x|}{2}\leq|y|\leq 2|x|$ and $2^{-k-1}(\theta+1)r\leq |x|\leq 2^{-k}(\theta+1)r$, we have
\begin{equation}
   2^{-k-2}(\theta+1)r\leq |y|\leq 2^{-k+1}(\theta+1)r,
\end{equation}
and
\begin{equation*}
    |y^{-1}x|\leq |x|+|y|\leq 3|x|\leq 3\cdot2^{-k}(\theta+1)r.
\end{equation*}
From $1<p\leq q<\infty$ for $1+\frac{1}{q}=\frac{1}{p}+\frac{1}{\zeta}$ it follows that $\zeta\in[1,+\infty]$ and $\zeta=1$ by taking $p=q$. Combining these with the Young inequality for the Lebesgue space we get
\begin{equation*}
    \begin{split}
        &\int_{|z^{-1}x|\leq r}|x|^{-\beta p}J^{p}_{2}(x)dx
       \\& \leq C\sum_{k=0}^{\infty}\int_{2^{-k-1}(\theta+1)r\leq |x|\leq 2^{-k}(\theta+1)r}|x|^{-\gamma p}\left(\int_{\frac{|x|}{2}\leq|y|\leq 2|x|}|y^{-1}x|^{\gamma-Q}|\tilde{u}(y)|dy\right)^{p}dx \\&
        \leq C\sum_{k=0}^{\infty}(2^{-k}(\theta+1)r)^{-\gamma p}
        \\& \times \int_{2^{-k-1}(\theta+1)r\leq |x|\leq 2^{-k}(\theta+1)r}\left(\int_{2^{-k-2}(\theta+1)r\leq|y|\leq 3\cdot2^{-k}(\theta+1)r}|y^{-1}x|^{\gamma-Q}|\tilde{u}(y)|dy\right)^{p}dx\\&
        \leq C\sum_{k=0}^{\infty}(2^{-k}(\theta+1)r)^{-\gamma p}\int_{\G}\left(\int_{\G}|y^{-1}x|^{\gamma-Q}|\tilde{u}(y)|\chi_{\{2^{-k-2}(\theta+1)r\leq|y|\leq 3\cdot2^{-k}(\theta+1)r\}}dy\right)^{p}dx\\&
        =C\sum_{k=0}^{\infty}(2^{-k}r)^{-\gamma p}\||\cdot|^{\gamma-Q}\ast[\tilde{u}\chi_{\{2^{-k-2}(\theta+1)r\leq|\cdot|\leq 3\cdot2^{-k}(\theta+1)r\}}]\|^{p}_{L^{p}(\G)}\\&
         \leq C\sum_{k=0}^{\infty}(2^{-k}(\theta+1)r)^{-\gamma p}\||\cdot|^{\gamma-Q}\chi_{\{|\cdot|\leq3\cdot2^{-k}(\theta+1)r\}}\|^{p}_{L^{1}(\G)}\|\tilde{u}\chi_{\{2^{-k-2}(\theta+1)r\leq|\cdot|\leq 3\cdot2^{-k}(\theta+1)r\}}\|^{p}_{L^{p}(\G)}
          \end{split}
\end{equation*}
        \begin{equation*}
    \begin{split}
        &
        \stackrel{\eqref{ocennashar}}\leq C\sum_{k=0}^{\infty}(2^{-k}r)^{-\gamma p}(2^{-k}r)^{\gamma p}\|\tilde{u}\chi_{\{2^{-k-2}(\theta+1)r\leq|\cdot|\leq 3\cdot2^{-k}(\theta+1)r\}}\|^{p}_{L^{p}(\G)}\\&
        \leq C\sum_{k=0}^{\infty}\int_{|y|\leq 3\cdot2^{-k}(\theta+1)r}|\tilde{u}(y)|^{p}dy\\&
        =C\sum_{k=0}^{\infty}\frac{(3\cdot2^{-k}(\theta+1)r)^{\lambda}}{(3\cdot2^{-k}(\theta+1)r)^{\lambda}}\int_{|y|\leq 3\cdot2^{-k}(\theta+1)r}|\tilde{u}(y)|^{p}dy\\&
        \leq C\|\tilde{u}\|^{p}_{LM^{\lambda}_{p}(\G)}\sum_{k=0}^{\infty}(2^{-k}r)^{\lambda}\\&
        \stackrel{\eqref{ocenwit}}\leq Cr^{\lambda}\|\tilde{u}\|^{p}_{M^{\lambda}_{p}(\G)}\\&
        =Cr^{\lambda}\||\cdot|^{\alpha}u\|^{p}_{M^{\lambda}_{p}(\G)}.
    \end{split}
\end{equation*}
Consider the case $|z|>\theta r$, where $\theta>1$. By using $|z^{-1}x|\leq r$, we have
\begin{equation}\label{ocenemb3}
    \theta r<|z|\leq |z^{-1}x|+|x|\leq r+|x|\,\,\,\,\Rightarrow \,\,\,|x|\geq (\theta-1)r,\,\,\text{where}\,\,\,\theta>1,
\end{equation}
and also by $|y|\leq 2|x|$, we get
\begin{equation}\label{ocenemb1}
    |y^{-1}x|\leq |x|+|y|\leq 3|x|\leq 3(|z^{-1}x|+|z|)\leq 3(r+|z|)\,\,\,\Rightarrow\,\,\,B(e,2|x|)\subset B(x,3(r+|z|)),
\end{equation}
where $e$ is an identity element of $\G$. In addition, we have
\begin{equation}\label{ocenemb2}
    |z|\leq |z^{-1}x|+|x|\leq r+|x|\,\,\,\,\Rightarrow\,\,\,\,|z|\leq r+|x|.
\end{equation}
By using these facts, for $\tilde{u}(x)=|x|^{\alpha}u(x)$, we calculate
\begin{equation*}
    \begin{split}
        J_{2}(x)&=\int_{\frac{|x|}{2}\leq |y|\leq 2|x|}|y^{-1}x|^{\gamma-Q}u(y)dy\\&
        \leq C|x|^{-\alpha}\int_{\frac{|x|}{2}\leq |y|\leq 2|x|}|y^{-1}x|^{\gamma-Q}u(y)|y|^{\alpha}dy\\&
        =C|x|^{-\alpha}\int_{\frac{|x|}{2}\leq |y|\leq 2|x|}|y^{-1}x|^{\gamma-Q}\tilde{u}(y)dy\\&
        \leq C|x|^{-\alpha}\int_{|y|\leq 2|x|}|y^{-1}x|^{\gamma-Q}\tilde{u}(y)dy\\&
        \stackrel{\eqref{ocenemb1}}\leq C|x|^{-\alpha}\int_{|y^{-1}x|\leq 3(r+|z|)}|y^{-1}x|^{\gamma-Q}\tilde{u}(y)dy\\&
        =C|x|^{-\alpha}\sum_{k=0}^{\infty}\int_{3(r+|z|)2^{-k-1}\leq |y^{-1}x|\leq 3(r+|z|)2^{-k}}|y^{-1}x|^{\gamma-Q}\tilde{u}(y)dy
        \end{split}
\end{equation*}
        \begin{equation}\label{j2}
    \begin{split}
        &\leq C|x|^{-\alpha}\sum_{k=0}^{\infty}\left(3(r+|z|)2^{-k}\right)^{\gamma-Q}\int_{|y^{-1}x|\leq 3(r+|z|)2^{-k}}\tilde{u}(y)dy\\&
        \leq C|x|^{-\alpha}M_{0}\tilde{u}(x)\sum_{k=0}^{\infty}\left(3(r+|z|)2^{-k}\right)^{\gamma}\\&
        \stackrel{\gamma>0}\leq C|x|^{-\alpha}(r+|z|)^{\gamma}M_{0}\tilde{u}(x)\\&
        \stackrel{\eqref{ocenemb2}}\leq C|x|^{-\alpha}(2r+|x|)^{\gamma}M_{0}\tilde{u}(x).
    \end{split}
\end{equation}
By using \eqref{j2}, \eqref{ocenemb3} and the boundedness of the Hardy-Littlewood maximal operator $M_{0}:M^{\lambda}_{p}(\G)\rightarrow M^{\lambda}_{p}(\G)$, we have
\begin{equation*}
    \begin{split}
        \int_{|z^{-1}x|<r}|x|^{-\beta p}J_{2}^{p}(x)dx&\stackrel{\eqref{j2}}\leq C\int_{|z^{-1}x|<r}|x|^{-\gamma p}(2r+|x|)^{\gamma p}|M_{0}\tilde{u}(x)|^{p}dx\\&
        =C\int_{|z^{-1}x|<r}\left(\frac{2r}{|x|}+1\right)^{\gamma p}|M_{0}\tilde{u}(x)|^{p}dx\\&
        \stackrel{\eqref{ocenemb3}}\leq C\int_{|z^{-1}x|<r}\left(\frac{2}{\theta-1}+1\right)^{\gamma p}|M_{0}\tilde{u}(x)|^{p}dx\\&
        \leq C\int_{|z^{-1}x|<r}|M_{0}\tilde{u}(x)|^{p}dx\\&
        \stackrel{\eqref{bounhl}}\leq Cr^{\lambda}\|\tilde{u}\|^{p}_{M^{\lambda}_{p}(\G)}\\&
        =Cr^{\lambda}\||\cdot|^{\alpha}u\|^{p}_{M^{\lambda}_{p}(\G)}.
    \end{split}
\end{equation*}

\textbf{Step 4}. Finally, by using conclusions of Step 1-Step 3, we arrive at
\begin{equation*}
    \int_{|z^{-1}x|<r}|x|^{-\beta q}|I_{\gamma}u(x)|^{q}dx\leq C\int_{|z^{-1}x|<r}|x|^{-\beta q}\left(\sum_{i=1}^{3}J^{q}_{i}(x)\right)dx\leq Cr^{\lambda}\||\cdot|^{\alpha}u\|^{q}_{M^{\lambda}_{p}(\G)},
\end{equation*}
completing the proof.
\end{proof}

\section{Consequences of the Adams type Stein-Weiss inequality}
This section presents some consequences of the Stein-Weiss-Adams inequality on the stratified group and  Euclidean settings.
\subsection{Stratified group setting}
In this subsection, we establish the integer order Hardy, Hardy-Sobolev, Rellich, and Gagliardo-Nirenberg inequalities on stratified groups.
\begin{thm}[Weighted Hardy inequality]\label{HIG}
    Let $\G$ be a stratified group with homogeneous dimension $Q$ and let $|\cdot|$ be a norm. Assume that $1<p<\infty$, $\alpha<\frac{Q}{p'}$, $\beta<\frac{Q-\lambda}{p}$, $\alpha+\beta=1$ and $0<\lambda<\min\{Q,\,\,Q-\beta p\}$. Then for all $|\cdot|^{\alpha}\nabla_{\G} u\in M^{\lambda}_{p}(\G)$ we have
    \begin{equation}\label{eq5-1}
        \left\|\frac{u}{|\cdot|^{\beta}}\right\|_{M^{\lambda}_{p}(\G)}\leq C \||\cdot|^{\alpha}\nabla_{\G} u\|_{M^{\lambda}_{p}(\G)}.
    \end{equation}
\end{thm}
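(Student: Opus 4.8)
The plan is to deduce this weighted Hardy inequality directly from the Adams type Stein-Weiss inequality (Theorem \ref{stein-weissthm}) specialised to $\gamma=1$, the bridge being a pointwise bound of $u$ by the Riesz potential $I_{1}$ of its horizontal gradient.

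First I would invoke the fundamental solution of the sub-Laplacian. On a stratified group $\mathbb{G}$ of homogeneous dimension $Q>1$ the operator $-\Delta_{\G}$ admits a fundamental solution $\varepsilon$, smooth away from the origin, whose horizontal gradient $\nabla_{\G}\varepsilon$ is homogeneous of degree $1-Q$ with respect to the dilations (see \cite{FR,RS_book}). Writing $u$ through $\varepsilon$ and integrating by parts yields, for $u\in C_{0}^{\infty}(\mathbb{G})$, a representation of the form
\[
u(x)=\int_{\mathbb{G}}\nabla_{\G}\varepsilon(y^{-1}x)\cdot \nabla_{\G}u(y)\,dy .
\]
Since $\nabla_{\G}\varepsilon$ is homogeneous of degree $1-Q$ and continuous off the origin, the equivalence of all homogeneous quasi-norms (Proposition \ref{prop_quasi_norm}) gives $|\nabla_{\G}\varepsilon(z)|\leq C|z|^{1-Q}$, and hence the pointwise domination
\[
|u(x)|\leq C\int_{\mathbb{G}}|y^{-1}x|^{1-Q}\,|\nabla_{\G}u(y)|\,dy=C\,\big(I_{1}|\nabla_{\G}u|\big)(x).
\]

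Next I would apply Theorem \ref{stein-weissthm} to the nonnegative function $|\nabla_{\G}u|$ with $\gamma=1$. The crucial point is that the hypothesis $\alpha+\beta=1$ makes $\gamma-\alpha-\beta=0$, so the balance $\frac{1}{q}=\frac{1}{p}-\frac{\gamma-\alpha-\beta}{Q-\lambda}$ degenerates to $q=p$; we are therefore in the borderline case $\gamma=\alpha+\beta$ of that theorem. The remaining hypotheses are all met: $0\leq\alpha+\beta=\gamma=1<Q$, $1<p<\frac{Q}{\gamma-\alpha-\beta}=\infty$, the condition $\alpha<\frac{Q}{p'}$ is assumed, $\beta<\frac{Q-\lambda}{q}=\frac{Q-\lambda}{p}$ is the stated hypothesis (equivalently $\lambda<Q-\beta p$), and $0<\lambda<Q-(\gamma-\alpha-\beta)p=Q$ follows from $\lambda<\min\{Q,\,Q-\beta p\}$. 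Using the monotonicity of the Morrey norm together with the pointwise bound above, this gives
\[
\Big\|\frac{u}{|\cdot|^{\beta}}\Big\|_{M^{\lambda}_{p}(\mathbb{G})}
\leq C\,\big\||\cdot|^{-\beta}I_{1}|\nabla_{\G}u|\big\|_{M^{\lambda}_{p}(\mathbb{G})}
\leq C\,\big\||\cdot|^{\alpha}\nabla_{\G}u\big\|_{M^{\lambda}_{p}(\mathbb{G})},
\]
which is \eqref{eq5-1}; the general case $|\cdot|^{\alpha}\nabla_{\G}u\in M^{\lambda}_{p}(\mathbb{G})$ follows by a standard density argument.

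The step I expect to be the main obstacle is the rigorous justification of the representation formula and of the kernel estimate in this setting: one must guarantee that the integration by parts is valid (first for $u\in C_{0}^{\infty}(\mathbb{G})$, then extended by density) and that $\nabla_{\G}\varepsilon$ is genuinely controlled by $|z|^{1-Q}$ for an arbitrary homogeneous quasi-norm, which is exactly where Proposition \ref{prop_quasi_norm} is needed. Once the domination $|u|\leq C\,I_{1}|\nabla_{\G}u|$ is secured, the conclusion is an immediate application of Theorem \ref{stein-weissthm}.
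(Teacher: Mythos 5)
Your proposal is correct and takes essentially the same route as the paper: a pointwise domination $|u(x)|\leq C\,\bigl(I_{1}|\nabla_{\G}u|\bigr)(x)$ followed by an application of Theorem \ref{stein-weissthm} in the borderline case $\gamma=\alpha+\beta=1$, $q=p$, with the same verification of hypotheses. The only difference is cosmetic: the paper simply cites the pointwise estimate from \cite[p.~280]{BLU07}, whereas you sketch its derivation via the fundamental solution of the sub-Laplacian, which is precisely the content of the cited result.
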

\begin{cor}
    By substituting $\alpha=0$ in \eqref{eq5-1}, we obtain the Hardy inequality
    \begin{equation}\label{CH1}
         \left\|\frac{u}{|\cdot|}\right\|_{M^{\lambda}_{p}(\G)}\leq C \|\nabla_{\G} u\|_{M^{\lambda}_{p}(\G)},\,\,\,0<\lambda<Q-p.
    \end{equation}
\end{cor}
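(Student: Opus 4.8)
The plan is to obtain \eqref{CH1} as a direct specialization of the weighted Hardy inequality \eqref{eq5-1} established in Theorem \ref{HIG}, so the only work required is to verify that the hypotheses of that theorem remain consistent and the parameter range stays nonempty once $\alpha=0$ is imposed, and then to simplify both sides of \eqref{eq5-1}.

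First I would set $\alpha=0$ in Theorem \ref{HIG}. Since the theorem requires $\alpha+\beta=1$, this forces $\beta=1$. With these values the remaining constraints reduce as follows: the condition $\alpha<\frac{Q}{p'}$ becomes $0<\frac{Q}{p'}$, which holds automatically because $Q>0$ and (from $1<p<\infty$) $p'>0$; the condition $\beta<\frac{Q-\lambda}{p}$ becomes $1<\frac{Q-\lambda}{p}$, equivalently $\lambda<Q-p$; and the range condition $0<\lambda<\min\{Q,\,Q-\beta p\}$ becomes $0<\lambda<\min\{Q,\,Q-p\}$. Since $p>1$ gives $Q-p<Q$, we have $\min\{Q,\,Q-p\}=Q-p$, so this last condition is precisely $0<\lambda<Q-p$. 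All three constraints are therefore compatible and collapse to the single admissible range $0<\lambda<Q-p$ stated in the corollary.

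Next I would substitute $\alpha=0$ and $\beta=1$ directly into the two sides of \eqref{eq5-1}. On the left, $|\cdot|^{\beta}=|\cdot|$, so $\left\|u/|\cdot|^{\beta}\right\|_{M^{\lambda}_{p}(\G)}=\left\|u/|\cdot|\right\|_{M^{\lambda}_{p}(\G)}$. On the right, $|\cdot|^{\alpha}=|\cdot|^{0}\equiv 1$, so $\||\cdot|^{\alpha}\nabla_{\G}u\|_{M^{\lambda}_{p}(\G)}=\|\nabla_{\G}u\|_{M^{\lambda}_{p}(\G)}$. Thus \eqref{eq5-1} reads exactly as \eqref{CH1} on the range $0<\lambda<Q-p$, and the standing hypothesis $|\cdot|^{\alpha}\nabla_{\G}u\in M^{\lambda}_{p}(\G)$ reduces to the natural assumption $\nabla_{\G}u\in M^{\lambda}_{p}(\G)$. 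This completes the derivation.

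Since this is a pure specialization, there is no genuine analytic obstacle; the only point requiring care is confirming that the parameter range does not degenerate after the substitution, namely verifying the identity $\min\{Q,\,Q-p\}=Q-p$ (which uses $p>1$), so that the admissible interval $0<\lambda<Q-p$ is nonempty whenever $Q>p$ and coincides with the range asserted in the corollary.
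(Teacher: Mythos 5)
Your proposal is correct and matches the paper's (implicit) reasoning exactly: the corollary is stated as a direct specialization of Theorem \ref{HIG} with $\alpha=0$, $\beta=1$, and your verification that the hypotheses collapse to the single condition $0<\lambda<Q-p$ (via $\min\{Q,\,Q-p\}=Q-p$ since $p>1$) is precisely the bookkeeping the paper leaves to the reader.
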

\begin{cor}[Uncertainly principle on Morrey space]\label{UP1}
   Let $\G$ be a stratified group with homogeneous dimension $Q$ and $|\cdot|$ be a quasi-norm.  Then  we have
    \begin{equation}
         \left\|u\right\|_{M^{\lambda}_{2}(\G)}\leq C \||\cdot| u\|_{M^{\lambda}_{2}(\G)}\|\nabla_{\G} u\|_{M^{\lambda}_{2}(\G)},\,\,\,0<\lambda<Q-2.
    \end{equation}
\end{cor}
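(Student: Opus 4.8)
The plan is to derive this uncertainty principle from the Hardy inequality \eqref{CH1}, taken at the value $p=2$ (which is legitimate precisely because the hypothesis $0<\lambda<Q-2$ matches the range required there), combined with a ball-by-ball application of the Cauchy--Schwarz inequality. The only point requiring care, compared with the classical $L^2$ argument, is that the Morrey norm is a supremum of normalised integrals over all quasi-balls; hence Cauchy--Schwarz must be applied inside each fixed ball \emph{before} the supremum is taken, and the normalising factor $r^{-\lambda}$ must be split symmetrically so that each resulting piece is itself controlled by a Morrey norm.

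Concretely, I would first fix a quasi-ball $B(z,r)$ and write, for $y$ in the ball, $|u(y)|^2=\bigl(|u(y)|/|y|\bigr)\cdot\bigl(|y|\,|u(y)|\bigr)$. Applying the Cauchy--Schwarz inequality on $B(z,r)$ gives
\begin{equation*}
\int_{B(z,r)}|u(y)|^2\,dy \leq \Bigl(\int_{B(z,r)}\frac{|u(y)|^2}{|y|^2}\,dy\Bigr)^{1/2}\Bigl(\int_{B(z,r)}|y|^2|u(y)|^2\,dy\Bigr)^{1/2}.
\end{equation*}
Multiplying by $r^{-\lambda}=r^{-\lambda/2}\cdot r^{-\lambda/2}$ and distributing one factor $r^{-\lambda/2}$ into each parenthesis, each factor becomes a normalised Morrey integral over $B(z,r)$, so bounding each by its supremum over all centres and radii yields
\begin{equation*}
r^{-\lambda}\int_{B(z,r)}|u(y)|^2\,dy \leq \Bigl\|\frac{u}{|\cdot|}\Bigr\|_{M^{\lambda}_{2}(\G)}\,\bigl\||\cdot|\,u\bigr\|_{M^{\lambda}_{2}(\G)}.
\end{equation*}

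Taking the supremum over all $z\in\G$ and $r>0$ on the left-hand side then gives $\|u\|_{M^{\lambda}_{2}(\G)}^{2}\leq \|u/|\cdot|\|_{M^{\lambda}_{2}(\G)}\,\||\cdot|\,u\|_{M^{\lambda}_{2}(\G)}$, and finally I would invoke the Hardy inequality \eqref{CH1} with $p=2$ to replace $\|u/|\cdot|\|_{M^{\lambda}_{2}(\G)}$ by $C\|\nabla_{\G}u\|_{M^{\lambda}_{2}(\G)}$, producing
\begin{equation*}
\|u\|_{M^{\lambda}_{2}(\G)}^{2}\leq C\,\bigl\||\cdot|\,u\bigr\|_{M^{\lambda}_{2}(\G)}\,\|\nabla_{\G}u\|_{M^{\lambda}_{2}(\G)},
\end{equation*}
which is the asserted uncertainty principle. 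I do not expect a genuine obstacle here: the argument is the standard Heisenberg-type interpolation, and the only delicate ingredients are the symmetric splitting of $r^{-\lambda}$ that makes Cauchy--Schwarz compatible with the supremum defining the Morrey norm, together with the observation that the exponent range $0<\lambda<Q-2$ is exactly the one under which \eqref{CH1} is available at $p=2$.
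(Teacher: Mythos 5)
Your proposal is correct and is essentially the paper's own proof: the authors likewise factor $|u(y)|^{2}=\bigl(|y|\,|u(y)|\bigr)\cdot\bigl(|y|^{-1}|u(y)|\bigr)$ on a fixed ball, apply Cauchy--Schwarz there, absorb the factor $r^{\lambda}$ symmetrically into the two Morrey norms, and then invoke the Hardy inequality \eqref{CH1} with $p=2$ (valid exactly for $0<\lambda<Q-2$). Note only that, as in the paper, what this argument actually yields is the squared form $\|u\|_{M^{\lambda}_{2}(\G)}^{2}\leq C\,\||\cdot|\,u\|_{M^{\lambda}_{2}(\G)}\,\|\nabla_{\G}u\|_{M^{\lambda}_{2}(\G)}$, matching the proof rather than the unsquared left-hand side as displayed in the corollary's statement.
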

\begin{proof}[Proof of Corollary \ref{UP1}]
    By combining H\"{o}lder's inequality and \eqref{CH1} with $p=2$, we obtain
    \begin{equation*}
        \begin{split}
          \int_{B(x,r)}|u(y)|^{2}dy&=\int_{B(x,r)}|y||u(y)||y|^{-1}|u(y)|dy\\&
          \leq \left(\int_{B(x,r)}|y|^{2}|u(y)|^{2}dy\right)^{\frac{1}{2}}\left(\int_{B(x,r)}|y|^{-2}|u(y)|^{2}dy\right)^{\frac{1}{2}}\\&
          \leq r^{\lambda} \||\cdot| u\|_{M^{\lambda}_{2}(\G)}\left\|\frac{u}{|\cdot|}\right\|_{M^{\lambda}_{2}(\G)}\\&
          \stackrel{\eqref{CH1}}\leq r^{\lambda} \||\cdot| u\|_{M^{\lambda}_{2}(\G)}\|\nabla_{\G} u\|_{M^{\lambda}_{2}(\G)},
        \end{split}
    \end{equation*}
    completing the proof.
\end{proof}
\begin{proof}[Proof of Theorem \ref{HIG}]
     By applying pointwise estimate from \cite[p. 280]{BLU07}, we have
    \begin{equation*}
        |u(x)|\leq C\int_{\G}\frac{|\nabla_{\G} u(y)|}{|y^{-1}x|^{Q-1}}dy=CI_{Q-1}(|\nabla_{\G} u|).
    \end{equation*}
    By taking $\alpha+\beta=\gamma=1$ in Theorem \ref{stein-weissthm}, we get
    \begin{equation*}
        Q>\gamma=\alpha+\beta=1\,\,\,\Rightarrow\,\,\,q=p,
        \end{equation*}
        \begin{equation*}
        \lambda<Q-\beta p,\,\,\,\alpha<\frac{Q}{p'},\,\,\,0<\lambda<Q,
    \end{equation*}
    and
    \begin{equation*}
        \beta<\frac{Q-\lambda}{p}\,\,\,\Rightarrow\,\,\,\lambda<Q-\beta p.
    \end{equation*}
    By combining last facts with \eqref{stein-weiss}, we get
    \begin{equation*}
        \begin{split}
            \left\|\frac{u}{|\cdot|^{\beta}}\right\|_{M^{\lambda}_{p}(\G)}\leq C\left\|\frac{I_{Q-1}(|\nabla_{\G} u|)}{|\cdot|^{\beta}}\right\|_{M^{\lambda}_{p}(\G)}\stackrel{\eqref{stein-weiss}}\leq C\||\cdot|^{\alpha}\nabla u\|_{M^{\lambda}_{p}(\G)},
        \end{split}
    \end{equation*}
    completing the proof.
\end{proof}
Here we state the Hardy-Sobolev inequality in Morrey spaces.
\begin{thm}[Weighted Hardy-Sobolev inequality]\label{HSG}
    Let $\G$ be a stratified group with homogeneous dimension $Q$ and $|\cdot|$ be a norm.
    Let  $\alpha,\beta\in \mathbb{R}$, $0\leq \alpha+\beta\leq 1 <Q$, $1<p<\frac{Q}{1-\alpha-\beta}$, $\frac{1}{q}=\frac{1}{p}-\frac{1-\alpha-\beta}{Q-\lambda}$,  $\alpha<\frac{Q}{p'}$, $\beta<\frac{Q-\lambda}{q}$ and $0<\lambda<\min\{Q-\beta p, Q-(1-\alpha-\beta)p\}$. Then, for all  $|\cdot|^{\alpha}u\in M^{\lambda}_{p}(\G)$, we have
\begin{equation}\label{CHS}
\left\|\frac{u}{|\cdot|^{\beta}}\right\|_{M^{\lambda}_{q}(\G)}\leq C \||\cdot|^{\alpha}\nabla_{\G} u\|_{M^{\lambda}_{p}(\G)},
\end{equation}
where $C$ is a positive constant independent of $u$.
\end{thm}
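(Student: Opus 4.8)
The plan is to reduce the claim to the Adams type Stein-Weiss inequality (Theorem \ref{stein-weissthm}), exactly as in the proof of Theorem \ref{HIG}, the only difference being that here I keep the genuine Sobolev exponent $q$ rather than specialising to the diagonal case $p=q$. First I would invoke the pointwise gradient bound from \cite[p. 280]{BLU07}, which on a stratified group gives
\begin{equation*}
|u(x)|\leq C\int_{\G}\frac{|\nabla_{\G}u(y)|}{|y^{-1}x|^{Q-1}}dy=CI_{Q-1}(|\nabla_{\G}u|)(x),
\end{equation*}
so that $|u|$ is dominated pointwise by a Riesz potential of $|\nabla_{\G}u|$ of effective order $\gamma=1$ (the convolution kernel being $|y^{-1}x|^{1-Q}$).

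Next I would check that, with $\gamma=1$, the hypotheses of Theorem \ref{stein-weissthm} are precisely those assumed here. Indeed $0<\gamma=1<Q$, the range $0\leq\alpha+\beta\leq\gamma<Q$ becomes $0\leq\alpha+\beta\leq 1<Q$, the admissible interval $1<p<\frac{Q}{\gamma-\alpha-\beta}$ becomes $1<p<\frac{Q}{1-\alpha-\beta}$, the balance condition $\frac{1}{q}=\frac{1}{p}-\frac{\gamma-\alpha-\beta}{Q-\lambda}$ becomes $\frac{1}{q}=\frac{1}{p}-\frac{1-\alpha-\beta}{Q-\lambda}$, and the side conditions $\alpha<\frac{Q}{p'}$, $\beta<\frac{Q-\lambda}{q}$, $0<\lambda<Q-(\gamma-\alpha-\beta)p=Q-(1-\alpha-\beta)p$ all match the present statement. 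The extra requirement $\lambda<Q-\beta p$ appearing here is not needed to apply Theorem \ref{stein-weissthm}; it only ensures finiteness of the weighted Morrey norm on the left-hand side (and, in the borderline case $\alpha+\beta=1$, reduces to the constraint already present in Theorem \ref{HIG}).

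Applying Theorem \ref{stein-weissthm} to the nonnegative function $|\nabla_{\G}u|$ in place of $u$ then yields
\begin{equation*}
\left\||\cdot|^{-\beta}I_{Q-1}(|\nabla_{\G}u|)\right\|_{M^{\lambda}_{q}(\G)}\leq C\left\||\cdot|^{\alpha}|\nabla_{\G}u|\right\|_{M^{\lambda}_{p}(\G)}=C\||\cdot|^{\alpha}\nabla_{\G}u\|_{M^{\lambda}_{p}(\G)}.
\end{equation*}
Since the Morrey norm is monotone under pointwise domination of nonnegative functions, combining this with the pointwise bound gives
\begin{equation*}
\left\|\frac{u}{|\cdot|^{\beta}}\right\|_{M^{\lambda}_{q}(\G)}=\left\||\cdot|^{-\beta}u\right\|_{M^{\lambda}_{q}(\G)}\leq C\left\||\cdot|^{-\beta}I_{Q-1}(|\nabla_{\G}u|)\right\|_{M^{\lambda}_{q}(\G)}\leq C\||\cdot|^{\alpha}\nabla_{\G}u\|_{M^{\lambda}_{p}(\G)},
\end{equation*}
which is \eqref{CHS}.

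Since all the analytic work is already packaged in Theorem \ref{stein-weissthm}, I do not expect a genuine obstacle in the argument. The only point requiring care is the bookkeeping of parameters: one must confirm that taking $\gamma=1$ keeps $\gamma-\alpha-\beta=1-\alpha-\beta\geq 0$, so that the Riesz order in Theorem \ref{stein-weissthm} stays admissible, and one must note that it is the validity of the pointwise estimate from \cite{BLU07}—which relies on the stratified structure and the horizontal gradient $\nabla_{\G}$—that restricts this statement to stratified groups rather than to general homogeneous groups.
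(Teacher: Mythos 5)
Your proposal is correct and follows exactly the route the paper intends: the paper's proof of this theorem simply says it is ``similar to the one of Theorem \ref{HIG}'', namely the pointwise bound $|u(x)|\leq C\,I_{Q-1}(|\nabla_{\G}u|)(x)$ from \cite[p.~280]{BLU07} followed by an application of Theorem \ref{stein-weissthm} with $\gamma=1$, which is precisely what you carried out (with the parameter bookkeeping made explicit, including the borderline case $\alpha+\beta=1$ handled by the $p=q$ part of Theorem \ref{stein-weissthm}).
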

\begin{cor}
    By taking $\alpha=\beta=0$ and $\beta=\gamma=1$ with $\alpha=0$ in the previous theorem, we get the Sobolev inequality and the Hardy inequality, respectively.
\end{cor}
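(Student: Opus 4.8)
The plan is simply to substitute the two indicated parameter choices into Theorem \ref{HSG} and check that, at each boundary value, all the hypotheses of that theorem remain satisfied, so that the conclusion \eqref{CHS} specializes to the asserted inequalities. No new estimate is needed; both statements are degenerate cases of the weighted Hardy--Sobolev inequality.

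First, for the Sobolev inequality I would set $\alpha=\beta=0$. Then $1-\alpha-\beta=1$, so the admissibility condition $1<p<\frac{Q}{1-\alpha-\beta}$ becomes $1<p<Q$, the balance law $\frac{1}{q}=\frac{1}{p}-\frac{1-\alpha-\beta}{Q-\lambda}$ reads $\frac{1}{q}=\frac{1}{p}-\frac{1}{Q-\lambda}$, the weight restrictions $\alpha<\frac{Q}{p'}$ and $\beta<\frac{Q-\lambda}{q}$ hold trivially since their left-hand sides vanish, and the Morrey range $0<\lambda<\min\{Q-\beta p,\,Q-(1-\alpha-\beta)p\}$ collapses to $0<\lambda<\min\{Q,\,Q-p\}=Q-p$. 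Substituting $|\cdot|^{\alpha}=|\cdot|^{-\beta}=1$ into \eqref{CHS} then gives
\begin{equation*}
\|u\|_{M^{\lambda}_{q}(\G)}\leq C\,\|\nabla_{\G}u\|_{M^{\lambda}_{p}(\G)},
\end{equation*}
which is the Morrey-space Sobolev inequality.

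Next, for the Hardy inequality I would set $\alpha=0$ and $\beta=1$, so that $\alpha+\beta=1$ matches the order $\gamma=1$ of the potential $I_{Q-1}$ used in the proof of Theorem \ref{HSG} (whence the phrasing $\beta=\gamma=1$). Now $1-\alpha-\beta=0$, so the upper bound on $p$ degenerates to $\frac{Q}{0}=+\infty$ and leaves only $p>1$, while the balance law yields $\frac{1}{q}=\frac{1}{p}$, i.e. $q=p$. The condition $\beta<\frac{Q-\lambda}{q}$ becomes $1<\frac{Q-\lambda}{p}$, that is $\lambda<Q-p$, which coincides with the surviving Morrey range $0<\lambda<\min\{Q-p,\,Q\}=Q-p$; the remaining constraint $\alpha<\frac{Q}{p'}$ is automatic. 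Thus \eqref{CHS} reduces to
\begin{equation*}
\left\|\frac{u}{|\cdot|}\right\|_{M^{\lambda}_{p}(\G)}\leq C\,\|\nabla_{\G}u\|_{M^{\lambda}_{p}(\G)},\qquad 0<\lambda<Q-p,
\end{equation*}
recovering the Hardy inequality \eqref{CH1}.

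The only point demanding a moment's attention --- and I would not dignify it as an obstacle --- is the verification that the chain of strict inequalities constituting the hypotheses of Theorem \ref{HSG} stays valid exactly at these extreme parameter values, in particular that the ceiling $p<\frac{Q}{1-\alpha-\beta}$ becomes vacuous when $\alpha+\beta=1$ rather than contradictory. Once this is confirmed, Theorem \ref{HSG} applies verbatim in both cases and delivers the two stated inequalities.
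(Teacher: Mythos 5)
Your proposal is correct and matches the paper's (implicit) argument: the corollary is stated without proof precisely because it is the direct specialization of Theorem \ref{HSG}, first with $\alpha=\beta=0$ giving the Sobolev inequality, then with $\alpha=0$, $\beta=1$ (so $\alpha+\beta=\gamma=1$, $q=p$, and the range collapsing to $0<\lambda<Q-p$) giving the Hardy inequality \eqref{CH1}. Your bookkeeping at the degenerate endpoint $\alpha+\beta=1$ --- the upper bound $p<\frac{Q}{1-\alpha-\beta}$ becoming vacuous rather than contradictory --- is exactly the only point that needed checking, and you check it correctly.
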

\begin{proof}[Proof of Theorem \ref{HSG}]
    The proof of this theorem is similar to the one of Theorem \ref{HIG}.
\end{proof}
Also, we present the Morrey space version of the Rellich inequality.
\begin{thm}[Weighted Rellich inequality]\label{RG}
    Let $\G$ be a stratified group with homogeneous dimension $Q$ and $|\cdot|$ be a norm.
    Let  $\alpha,\beta\in \mathbb{R}$, $\alpha+\beta=2$, $1<p<\infty$,  $\alpha<\frac{Q}{p'}$, $\beta<\frac{Q-\lambda}{p}$ and $0<\lambda<\min\{Q,Q-\beta p\}$. Then, for all  $|\cdot|^{\alpha}u\in M^{\lambda}_{p}(\G)$, we have
\begin{equation}\label{RGI}
\left\|\frac{u}{|\cdot|^{\beta}}\right\|_{M^{\lambda}_{p}(\G)}\leq C \||\cdot|^{\alpha}\Delta_{\G} u\|_{M^{\lambda}_{p}(\G)},
\end{equation}
where $C$ is a positive constant independent of $u$.
\end{thm}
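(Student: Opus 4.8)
The plan is to follow the same strategy as in the proof of Theorem \ref{HIG}, replacing the first-order gradient representation by a second-order one governed by the fundamental solution of the sub-Laplacian. Since $\alpha+\beta=2$, the natural choice is to invoke Theorem \ref{stein-weissthm} with $\gamma=2$, which forces the borderline case $\gamma=\alpha+\beta$ and hence $q=p$.

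First I would establish the pointwise representation estimate
\[
|u(x)|\leq C\int_{\G}|y^{-1}x|^{2-Q}|\Delta_{\G}u(y)|\,dy=C I_{2}(|\Delta_{\G}u|)(x).
\]
On a stratified group with $Q>2$ the sub-Laplacian $\Delta_{\G}$ admits a unique fundamental solution $\varepsilon$ which is homogeneous of degree $2-Q$ and smooth away from the origin (see e.g. \cite{BLU07,FR}); in particular $|\varepsilon(x)|\leq C|x|^{2-Q}$ for any fixed homogeneous norm. Writing the representation $u=\varepsilon\ast(\Delta_{\G}u)$ (up to a sign) for $u$ in a suitable dense class and then using $|\varepsilon(y^{-1}x)|\leq C|y^{-1}x|^{2-Q}$ yields the displayed bound.

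With this estimate in hand, I would check that the hypotheses of Theorem \ref{stein-weissthm} are met with $\gamma=2$ and $\alpha+\beta=\gamma$: one has $\gamma-\alpha-\beta=0$, so $\tfrac1q=\tfrac1p-\tfrac{\gamma-\alpha-\beta}{Q-\lambda}=\tfrac1p$ (that is $q=p$); the range condition $1<p<\tfrac{Q}{\gamma-\alpha-\beta}$ becomes $1<p<\infty$; the weight conditions $\alpha<\tfrac{Q}{p'}$ and $\beta<\tfrac{Q-\lambda}{q}=\tfrac{Q-\lambda}{p}$ (equivalently $\lambda<Q-\beta p$) are exactly the standing assumptions; and $0<\lambda<Q-(\gamma-\alpha-\beta)p=Q$. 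Thus $0<\lambda<\min\{Q,Q-\beta p\}$ is precisely what is needed. Applying \eqref{stein-weiss} to $I_{2}(|\Delta_{\G}u|)$ then gives
\[
\left\|\frac{u}{|\cdot|^{\beta}}\right\|_{M^{\lambda}_{p}(\G)}\leq C\left\|\frac{I_{2}(|\Delta_{\G}u|)}{|\cdot|^{\beta}}\right\|_{M^{\lambda}_{p}(\G)}\leq C\||\cdot|^{\alpha}\Delta_{\G}u\|_{M^{\lambda}_{p}(\G)},
\]
which is \eqref{RGI}.

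The main obstacle is the first step: one must ensure $Q>2$ (so that the fundamental solution has the required negative homogeneity $2-Q$) and rigorously justify the representation $u=\varepsilon\ast(\Delta_{\G}u)$ together with its pointwise domination by $I_{2}(|\Delta_{\G}u|)$, typically by first proving it for $u\in C^{\infty}_{c}(\G)$ and then passing to the limit via a density argument. Once this pointwise inequality is secured, the remainder is a direct application of the already-established Stein-Weiss-Adams inequality, exactly as in the Hardy case of Theorem \ref{HIG}.
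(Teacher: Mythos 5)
Your proposal is correct and follows essentially the same route as the paper: the paper likewise invokes the representation $u(x)=-\int_{\G}\Gamma(y^{-1}x)\Delta_{\G}u(y)\,dy$ with $\Gamma=C_{Q}|\cdot|^{2-Q}$ (citing \cite[Theorems 5.3.3 and 5.5.6]{BLU07}) to obtain the pointwise bound by the Riesz potential of $|\Delta_{\G}u|$, and then applies Theorem \ref{stein-weissthm} with $\gamma=\alpha+\beta=2$ (hence $q=p$). Your extra care about $Q>2$ (needed for $\gamma=2<Q$ in Theorem \ref{stein-weissthm}) and about justifying the representation on a dense class is a reasonable refinement that the paper leaves implicit.
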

\begin{cor}
    By taking $\alpha=0$ and $p=2$, we get the classical Rellich inequality in the following form:

        \begin{equation}\label{RGIs}
\left\|\frac{u}{|\cdot|^{2}}\right\|_{M^{\lambda}_{2}(\G)}\leq C \|\Delta_{\G} u\|_{M^{\lambda}_{2}(\G)},\,\,\,\,\,0<\lambda<Q-4.
\end{equation}
\end{cor}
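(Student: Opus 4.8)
The plan is to obtain \eqref{RGIs} directly by specializing the weighted Rellich inequality \eqref{RGI} of Theorem \ref{RG} to the parameter choice $\alpha=0$ and $p=2$, rather than re-deriving anything from scratch. First I would substitute these values into the hypotheses of Theorem \ref{RG}. Since that theorem requires $\alpha+\beta=2$, setting $\alpha=0$ forces $\beta=2$; and $p=2$ gives the conjugate exponent $p'=2$ as well. These are the only two dependent quantities I need to track.

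Next I would verify that each hypothesis of Theorem \ref{RG} is met under this choice, and read off the resulting admissible range for $\lambda$. The condition $\alpha<\frac{Q}{p'}$ becomes $0<\frac{Q}{2}$, which holds trivially since $Q>0$. The condition $\beta<\frac{Q-\lambda}{p}$ becomes $2<\frac{Q-\lambda}{2}$, that is, $\lambda<Q-4$. Finally, the range $0<\lambda<\min\{Q,\,Q-\beta p\}$ becomes $0<\lambda<\min\{Q,\,Q-4\}=Q-4$, because $Q-4<Q$. Hence all three constraints are mutually compatible and collapse to the single interval $0<\lambda<Q-4$ (nonempty precisely when $Q>4$), exactly the range recorded in the corollary.

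With the hypotheses checked, the conclusion \eqref{RGI} reads, upon inserting $\alpha=0$, $\beta=2$, $p=2$,
\begin{equation*}
\left\|\frac{u}{|\cdot|^{2}}\right\|_{M^{\lambda}_{2}(\G)}\leq C \||\cdot|^{0}\Delta_{\G} u\|_{M^{\lambda}_{2}(\G)}=C\|\Delta_{\G} u\|_{M^{\lambda}_{2}(\G)},
\end{equation*}
for all $0<\lambda<Q-4$, which is precisely \eqref{RGIs}. Since the argument is a pure specialization of an already-established inequality, there is no genuine analytic obstacle; the only point requiring a little care is the evaluation of $\min\{Q,\,Q-\beta p\}$, where one must confirm that the weighted constraint $Q-\beta p=Q-4$ is the binding one, so that the stated bound $0<\lambda<Q-4$ is indeed what the theorem delivers.
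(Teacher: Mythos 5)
Your proposal is correct and matches the paper's (implicit) argument: the corollary is stated as a direct specialization of Theorem \ref{RG} with $\alpha=0$, $\beta=2$, $p=2$, and your verification that the hypotheses $\alpha<\frac{Q}{p'}$, $\beta<\frac{Q-\lambda}{p}$ and $0<\lambda<\min\{Q,\,Q-\beta p\}$ all collapse to $0<\lambda<Q-4$ is exactly what is needed. Nothing further is required.
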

\begin{proof}[Proof of Theorem \ref{RG}]
    By \cite[Theorem 5.3.3]{BLU07}, we have that the
    \begin{equation}
        u(x)=-\int_{\G}\Gamma(y^{-1}x)\Delta_{\G}u(y)dy,
    \end{equation}
    where $\Gamma(\cdot)$ is the fundamental solution of the sub-Laplacian and also, by \cite[Theorem 5.5.6]{BLU07} we have $\Gamma(\cdot)=C_{Q}|\cdot|^{2-Q}$, where $C_{Q}>0$. Hence, we have that the
    \begin{equation}
        |u(x)|\leq \int_{\G}|\Gamma(y^{-1}x)||\Delta_{\G}u(y)|dy\leq C\int_{\G}|y^{-1}x|^{2-Q}|\Delta_{\G}u(y)|dy=CI_{Q-2}(|\Delta_{\G}u|).
    \end{equation}
     By taking $\alpha+\beta=\gamma=2$ in Theorem \ref{stein-weissthm}, we get
     \begin{equation*}
        \begin{split}
            \left\|\frac{u}{|\cdot|^{\beta}}\right\|_{M^{\lambda}_{p}(\G)}\leq C\left\|\frac{I_{Q-2}(|\Delta_{\G} u|)}{|\cdot|^{\beta}}\right\|_{M^{\lambda}_{p}(\G)}\stackrel{\eqref{stein-weiss}}\leq C\||\cdot|^{\alpha}\Delta u\|_{M^{\lambda}_{p}(\G)},
        \end{split}
    \end{equation*}
    completing the proof.
\end{proof}
We also derive the Gagliardo-Nirenberg inequality.
\begin{thm}[Gagliardo-Nirenberg inequality]\label{GNG}
Let $\G$ be a stratified group with homogeneous dimension $Q$ and $|\cdot|$ be a norm.
  Let  $1<p<Q$ and $0<\lambda<Q-p$. Assume that $\frac{1}{q}=a\left(\frac{1}{p}-\frac{1}{Q-\lambda}\right)+
\frac{1-a}{r}$, $a\in[0,1]$, $q>1$ and $r\geq1$, then we have
\begin{equation}\label{GNi}
\left\|u\right\|_{M^{\lambda}_{q}(\G)}\leq C\|\nabla_{\G}u\|^{a}_{M^{\lambda}_{p}(\G)}\|u\|^{1-a}_{M^{\lambda}_{r}(\G)},
\end{equation}
where $C$ is a positive constant independent of $u$.
\end{thm}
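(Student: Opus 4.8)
The plan is to derive \eqref{GNi} by combining the endpoint Sobolev inequality on Morrey spaces with a H\"{o}lder-type interpolation inequality between Morrey spaces carrying the \emph{same} homogeneity index $\lambda$. The differential structure of the stratified group enters only through the Sobolev step; the rest is elementary interpolation.

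First I would record the endpoint Sobolev inequality. Taking $\alpha=\beta=0$ and $\gamma=1$ in Theorem~\ref{HSG} (equivalently, in Theorem~\ref{stein-weissthm} via the pointwise bound $|u|\leq CI_{Q-1}(|\nabla_{\G}u|)$) gives, under the present hypotheses $1<p<Q$ and $0<\lambda<Q-p$,
\[
\|u\|_{M^{\lambda}_{p^{*}}(\G)}\leq C\|\nabla_{\G}u\|_{M^{\lambda}_{p}(\G)},\qquad \frac{1}{p^{*}}=\frac{1}{p}-\frac{1}{Q-\lambda}.
\]
The admissibility conditions of Theorem~\ref{HSG} in this case reduce exactly to $1<p<Q$ and $0<\lambda<Q-p$, so the Sobolev bound is available throughout. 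This is precisely the $a=1$ instance of \eqref{GNi}.

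Second, I would establish the Morrey interpolation inequality
\[
\|u\|_{M^{\lambda}_{q}(\G)}\leq \|u\|^{a}_{M^{\lambda}_{p^{*}}(\G)}\,\|u\|^{1-a}_{M^{\lambda}_{r}(\G)},\qquad \frac{1}{q}=\frac{a}{p^{*}}+\frac{1-a}{r},\quad a\in[0,1].
\]
Fixing a ball $B(x,\rho)$ and writing $|u|^{q}=|u|^{aq}|u|^{(1-a)q}$, I would apply H\"{o}lder's inequality with exponents $\tfrac{p^{*}}{aq}$ and $\tfrac{r}{(1-a)q}$; the defining relation for $q$ makes these conjugate, and since $\tfrac{1}{q}\geq\tfrac{a}{p^{*}}$ and $\tfrac{1}{q}\geq\tfrac{1-a}{r}$ both exponents are at least $1$, so the step is legitimate. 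This yields
\[
\int_{B(x,\rho)}|u|^{q}\,dy\leq\left(\int_{B(x,\rho)}|u|^{p^{*}}\,dy\right)^{\frac{aq}{p^{*}}}\left(\int_{B(x,\rho)}|u|^{r}\,dy\right)^{\frac{(1-a)q}{r}}.
\]
Bounding each factor by the corresponding Morrey norm produces radial weights $(\rho^{\lambda})^{aq/p^{*}}$ and $(\rho^{\lambda})^{(1-a)q/r}$, whose exponents sum to $q\bigl(\tfrac{a}{p^{*}}+\tfrac{1-a}{r}\bigr)=1$; hence the total weight is exactly $\rho^{\lambda}$. Dividing by $\rho^{\lambda}$, taking the supremum over $x$ and $\rho$, and extracting the $q$-th root gives the interpolation inequality. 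The clean cancellation is entirely due to all three spaces sharing the same index $\lambda$.

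Finally, I would substitute the Sobolev bound into the interpolation inequality to obtain
\[
\|u\|_{M^{\lambda}_{q}(\G)}\leq\|u\|^{a}_{M^{\lambda}_{p^{*}}(\G)}\,\|u\|^{1-a}_{M^{\lambda}_{r}(\G)}\leq C\|\nabla_{\G}u\|^{a}_{M^{\lambda}_{p}(\G)}\,\|u\|^{1-a}_{M^{\lambda}_{r}(\G)},
\]
which is \eqref{GNi}. The endpoints are automatic: $a=0$ forces $q=r$ and the inequality is trivial, while $a=1$ forces $q=p^{*}$ and it is the Sobolev inequality. I do not anticipate a genuine obstacle; the only delicate point is the bookkeeping in the H\"{o}lder step, namely verifying that $\tfrac{p^{*}}{aq}$ and $\tfrac{r}{(1-a)q}$ are conjugate exponents lying in $[1,\infty]$ and that the radial powers recombine to $\rho^{\lambda}$, together with checking that $q>1$, $r\geq1$, $1<p<Q$ and $0<\lambda<Q-p$ keep every exponent in the range where the Morrey norms are defined and the Sobolev estimate applies.
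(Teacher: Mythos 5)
Your proposal is correct and follows essentially the same route as the paper: the authors likewise split $|u|^{q}=|u|^{aq}|u|^{(1-a)q}$ on a ball, apply H\"{o}lder's inequality with the conjugate pair determined by $\frac{1}{q}=\frac{a}{p^{*}}+\frac{1-a}{r}$ so that the $r^{\lambda}$-weights recombine exactly, and then invoke the Sobolev case $\alpha=\beta=0$ of \eqref{CHS} to replace $\|u\|_{M^{\lambda}_{p^{*}}(\G)}$ by $C\|\nabla_{\G}u\|_{M^{\lambda}_{p}(\G)}$. Your two-lemma structuring (interpolation first, Sobolev second) is only a cosmetic reorganization of the paper's single chain of estimates.
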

\begin{proof}
By using the H\"{o}lder inequality, for every $\frac{1}{q}=a\left(\frac{1}{p}-\frac{1}{Q-\lambda}\right)+
\frac{a}{r}$, we obtain
\begin{equation*}
\begin{split}
    \int_{B(x,r)}|u(x)|^{q}dx&=\int_{B(x,r)}|u(x)|^{aq}|u(x)|^{(1-a)q}dx\\&
    \leq\left(\int_{B(x,r)}|u(x)|^{\frac{(Q-\lambda)p}{Q-\lambda-p}}dx\right)^{aq\left(\frac{1}{p}-\frac{1}{Q-\lambda}\right)}\left(\int_{B(x,r)}|u(x)|^{r}dx\right)^{\frac{(1-a)q}{r}}\\&
    =\left(\frac{r^{\lambda}}{r^{\lambda}}\int_{B(x,r)}|u(x)|^{\frac{(Q-\lambda)p}{Q-\lambda-p}}dx\right)^{aq\left(\frac{1}{p}-\frac{1}{Q-\lambda}\right)}\left(\frac{r^{\lambda}}{r^{\lambda}}\int_{B(x,r)}|u(x)|^{r}dx\right)^{\frac{(1-a)q}{r}}\\&
    \leq r^{\lambda aq\left(\frac{1}{p}-\frac{1}{Q-\lambda}\right)+\lambda\frac{(1-a)q}{r}}\|u\|^{aq}_{M^{\lambda}_{\frac{(Q-\lambda)p}{Q-\lambda-p}}(\G)}\|u\|^{(1-a)q}_{M^{\lambda}_{r}(\G)}\\&
    \stackrel{\eqref{CHS},\alpha=\beta=0}\leq  Cr^{\lambda }\|\nabla_{\G}u\|^{aq}_{M^{\lambda}_{p}(\G)}\|u\|^{(1-a)q}_{M^{\lambda}_{r}(\G)},
\end{split}
\end{equation*}
    completing the proof.
\end{proof}
\subsection{Euclidean setting}
This subsection presents some consequences of the Stein-Weiss-Adams inequality in the Euclidean setting. Many of these inequalities are new already on $\R$.

Firstly, let us state the integer and fractional Hardy inequalities for  Morrey spaces.

\begin{thm}[Integer and fractional Hardy inequalities]\label{Hardy}
Assume that $1<p<\infty$, $\alpha<\frac{N}{p'}$, $\beta<\frac{N-\lambda}{p}$, $\alpha+\beta=1$ and $0<\lambda<\min\{N,\,\,N-\beta p\}$. Then for all $|\cdot|_{E}^{\alpha}\nabla u\in M^{\lambda}_{p}(\R)$ we have the integer Hardy inequality
 \begin{equation}\label{ehar}
        \left\|\frac{u}{|x|^{\beta}_{E}}\right\|_{M^{\lambda}_{p}(\mathbb{R}^{N})}\leq C\||\cdot|_{E}^{\alpha}\nabla u\|_{M^{\lambda}_{p}(\mathbb{R}^{N})}.
    \end{equation}
Moreover, when $\alpha=0$, we have the classical Hardy inequality on Morrey space
 \begin{equation}\label{euhar}
        \left\|\frac{u}{|x|_{E}}\right\|_{M^{\lambda}_{p}(\mathbb{R}^{N})}\leq C\|\nabla u\|_{M^{\lambda}_{p}(\mathbb{R}^{N})},\,\,\,\,\,N>p,\,\,0<\lambda<N-p.
    \end{equation}

Also, if  $1<p<\infty$,  $\alpha<\frac{N}{p'}$, $\beta<\frac{N-\lambda}{p}$, $\alpha+\beta=\gamma\in(0,1)$ and $0<\lambda<\min\{N,\,\,N-\beta p\}$, then, for all  $|\cdot|^{\alpha}_{E}(-\Delta )^{\frac{\gamma}{2}}u\in M^{\lambda}_{p}(\mathbb{R}^{N})$, we have the weighted fractional Hardy inequality
\begin{equation}\label{efhar}
\left\||\cdot|_{E}^{-\beta}u\right\|_{M^{\lambda}_{p}(\mathbb{R}^{N})}\leq C\||\cdot|^{\alpha}_{E}(-\Delta )^{\frac{\gamma}{2}}u\|_{M^{\lambda}_{p}(\mathbb{R}^{N})},
\end{equation}
where $C$ is a positive constant independent of $u$.
In addition, if $\gamma\in(0,1)$, $N>\gamma p$ and $0<\lambda<N-\gamma p$, we have the classical fractional Hardy inequality on the Morrey space
\begin{equation}\label{eufhar}
\left\||\cdot|_{E}^{-\gamma}u\right\|_{M^{\lambda}_{p}(\mathbb{R}^{N})}\leq C\|(-\Delta )^{\frac{\gamma}{2}}u\|_{M^{\lambda}_{p}(\mathbb{R}^{N})}.
\end{equation}
\end{thm}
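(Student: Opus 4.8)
The plan is to reduce every inequality in Theorem \ref{Hardy} to the Euclidean instance ($Q=N$, $|\cdot|=|\cdot|_E$) of the Stein-Weiss-Adams inequality, Theorem \ref{stein-weissthm}, in exactly the manner used for the stratified-group statements in Theorems \ref{HIG} and \ref{RG}. The only genuinely new ingredient in each part is a pointwise representation of $u$ as a Riesz potential of the relevant (differential or fractional) operator applied to $u$; once this is in place, the inequalities follow by applying Theorem \ref{stein-weissthm} in the borderline case $\alpha+\beta=\gamma$, which forces $q=p$.

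For the integer Hardy inequality \eqref{ehar} I would start from the classical Euclidean representation, valid for $u\in C_c^\infty(\R)$,
\[
u(x)=c_N\int_{\R}\frac{(x-y)\cdot\nabla u(y)}{|x-y|_E^{N}}\,dy,
\]
which yields the pointwise bound
\[
|u(x)|\leq C\int_{\R}\frac{|\nabla u(y)|}{|x-y|_E^{N-1}}\,dy=C\,I_{1}(|\nabla u|)(x),
\]
where $I_1$ is the Riesz potential of Section \ref{SEC:2} with $\gamma=1$ (kernel $|\cdot|_E^{1-N}$). Applying Theorem \ref{stein-weissthm} with $Q=N$, $\gamma=1$, and $\alpha+\beta=1$, $\alpha<\tfrac{N}{p'}$, $\beta<\tfrac{N-\lambda}{p}$ gives \eqref{ehar}: indeed $\alpha+\beta=\gamma$ forces $q=p$, the requirement $1<p<\tfrac{Q}{\gamma-\alpha-\beta}=+\infty$ is automatic, and $\beta<\tfrac{N-\lambda}{p}$ is precisely $\lambda<N-\beta p$, so the admissible range $0<\lambda<\min\{N,N-\beta p\}$ matches exactly the hypotheses of Theorem \ref{stein-weissthm}. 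The classical case \eqref{euhar} is then the specialization $\alpha=0$, $\beta=1$.

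For the fractional inequalities \eqref{efhar} and \eqref{eufhar} I would replace the gradient representation by the inversion identity for the fractional Laplacian. Combining the symbol identity $\widehat{(-\Delta)^{\gamma/2}u}(\xi)=|\xi|^{\gamma}\hat u(\xi)$ from the Proposition in Section \ref{SEC:2} with the Fourier symbol $c|\xi|^{-\gamma}$ of $I_\gamma$ gives $u=c_{N,\gamma}\,I_\gamma\big((-\Delta)^{\gamma/2}u\big)$ for $u\in\mathcal{S}$, hence
\[
|u(x)|\leq C\int_{\R}|x-y|_E^{\gamma-N}\,\big|(-\Delta)^{\gamma/2}u(y)\big|\,dy=C\,I_\gamma\big(|(-\Delta)^{\gamma/2}u|\big)(x).
\]
Feeding this into Theorem \ref{stein-weissthm} with $\alpha+\beta=\gamma\in(0,1)$ (again the diagonal case $q=p$, automatic $1<p<\infty$) produces \eqref{efhar}, and setting $\alpha=0$, $\beta=\gamma$, for which $\beta<\tfrac{N-\lambda}{p}$ reads $\lambda<N-\gamma p$, yields \eqref{eufhar}.

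The transfer of the parameter restrictions is routine, as indicated above; I expect the main obstacle to be analytic rather than algebraic. Specifically, one must justify the two representation formulas rigorously and extend them from $C_c^\infty(\R)$ (respectively $\mathcal{S}$) to the full class $|\cdot|_E^{\alpha}\nabla u\in M^\lambda_p(\R)$ (respectively $|\cdot|_E^{\alpha}(-\Delta)^{\gamma/2}u\in M^\lambda_p(\R)$) by a density argument, while ensuring that the Riesz potential on the right-hand side converges on this Morrey-weighted class. The fractional inversion identity $u=c_{N,\gamma}I_\gamma\big((-\Delta)^{\gamma/2}u\big)$, together with the control of its convergence, is thus the delicate point; the invocation of Theorem \ref{stein-weissthm} itself is then immediate.
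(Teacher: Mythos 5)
Your proposal follows the paper's own proof essentially verbatim: the paper likewise reduces \eqref{ehar} to the pointwise bound $|u(x)|\leq C\,I_{1}(|\nabla u|)(x)$ (cited from \cite[Lemma 6.26]{CF12} and \cite[Formula (18), p.~125]{Sbook}) and then applies the Euclidean case of Theorem \ref{stein-weissthm} in the diagonal regime $\gamma=\alpha+\beta$ (so $q=p$), while \eqref{efhar}--\eqref{eufhar} are obtained exactly as you suggest from the inversion identity $I_{\gamma}\bigl[(-\Delta)^{\gamma/2}u\bigr]=u$ together with the Euclidean instance of Theorem \ref{stein-weissthm}. The only differences are cosmetic: you derive the pointwise gradient bound from the classical representation formula rather than citing it, and you explicitly flag the density/convergence issues in extending the representation formulas beyond $C_c^\infty$ or $\mathcal{S}$, a point the paper passes over in silence.
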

\begin{rem}
    The unweighted fractional Hardy inequality \eqref{eufhar} was also proved in \cite{GHNS}.
\end{rem}
\begin{cor}[Uncertainly principle on Morrey space]
    If $N>2$ and $0<\lambda<N-2$,  then  we have
    \begin{equation}
         \left\|u\right\|_{M^{\lambda}_{2}(\R)}\leq C \||\cdot| u\|_{M^{\lambda}_{2}(\R)}\|\nabla u\|_{M^{\lambda}_{2}(\R)}.
    \end{equation}
    If $\gamma\in (0,1)$, $N>2\gamma$ and $0<\lambda<N-2\gamma$, then we have
    \begin{equation}
        \left\|u\right\|_{M^{\lambda}_{2}(\R)}\leq C \||\cdot|^{\gamma} u\|_{M^{\lambda}_{2}(\R)}\|(-\Delta )^{\frac{\gamma}{2}} u\|_{M^{\lambda}_{2}(\R)}.
    \end{equation}
\end{cor}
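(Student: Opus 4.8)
The plan is to mirror, line for line, the argument used for Corollary \ref{UP1} in the stratified setting, simply replacing the sub-Riemannian Hardy inequality \eqref{CH1} by its Euclidean counterparts already established in Theorem \ref{Hardy}. The two assertions are proved by the same scheme: a Cauchy--Schwarz splitting of $|u|^2$ with a weight $|x|_E^{\pm 1}$ (respectively $|x|_E^{\pm\gamma}$), followed by the integer Hardy inequality \eqref{euhar} in the first case and the fractional Hardy inequality \eqref{eufhar} in the second, both specialised to $p=2$.

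First I would fix a ball $B(x,r)\subset\R$ and factor the integrand as $|u(y)|^2=(|y|_E|u(y)|)(|y|_E^{-1}|u(y)|)$. Applying the Cauchy--Schwarz inequality gives
\begin{equation*}
\int_{B(x,r)}|u(y)|^2\,dy \leq \left(\int_{B(x,r)}|y|_E^{2}|u(y)|^2\,dy\right)^{\frac12}\left(\int_{B(x,r)}|y|_E^{-2}|u(y)|^2\,dy\right)^{\frac12}.
\end{equation*}
Inserting the factor $r^{\lambda/2}r^{-\lambda/2}$ into each integral and recognising the two factors as the $M^{\lambda}_2(\R)$-norms of $|\cdot|_E u$ and $u/|\cdot|_E$ restricted to $B(x,r)$, I obtain $r^{-\lambda}\int_{B(x,r)}|u|^2 \leq \||\cdot|_E u\|_{M^{\lambda}_2(\R)}\,\|u/|\cdot|_E\|_{M^{\lambda}_2(\R)}$. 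Taking the supremum over all $x$ and $r$ upgrades this to $\|u\|^2_{M^{\lambda}_2(\R)} \leq \||\cdot|_E u\|_{M^{\lambda}_2(\R)}\,\|u/|\cdot|_E\|_{M^{\lambda}_2(\R)}$. For the first assertion I then invoke \eqref{euhar} with $p=2$ (whose hypotheses $N>2$ and $0<\lambda<N-2$ are exactly those assumed here) to bound $\|u/|\cdot|_E\|_{M^{\lambda}_2(\R)}$ by $C\|\nabla u\|_{M^{\lambda}_2(\R)}$, which finishes that case. For the second assertion the identical splitting with weight $|y|_E^{\pm\gamma}$ yields $\|u\|^2_{M^{\lambda}_2(\R)} \leq \||\cdot|_E^{\gamma}u\|_{M^{\lambda}_2(\R)}\,\||\cdot|_E^{-\gamma}u\|_{M^{\lambda}_2(\R)}$, and the fractional Hardy inequality \eqref{eufhar} with $p=2$ (valid precisely when $\gamma\in(0,1)$, $N>2\gamma$ and $0<\lambda<N-2\gamma$) replaces $\||\cdot|_E^{-\gamma}u\|_{M^{\lambda}_2(\R)}$ by $C\|(-\Delta)^{\gamma/2}u\|_{M^{\lambda}_2(\R)}$.

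There is no genuine analytic obstacle here, since all of the substantive work is already packaged in Theorem \ref{Hardy}; the proof reduces to a one-line Cauchy--Schwarz estimate followed by a direct application of the appropriate Hardy inequality. The only point demanding a moment's care is the bookkeeping of parameters: I would verify that the ranges $N>2$, $0<\lambda<N-2$ (integer case) and $\gamma\in(0,1)$, $N>2\gamma$, $0<\lambda<N-2\gamma$ (fractional case) coincide exactly with the hypotheses needed to apply \eqref{euhar} and \eqref{eufhar} at the exponent $p=2$, which they indeed do. I would also note, as in the proof of Corollary \ref{UP1}, that the Cauchy--Schwarz step naturally produces the squared norm $\|u\|^2_{M^{\lambda}_2(\R)}$ on the left-hand side, which is the standard form of the uncertainty principle.
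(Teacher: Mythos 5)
Your proposal is correct and takes essentially the same route as the paper: the paper proves the stratified analogue (Corollary \ref{UP1}) by exactly this H\"older/Cauchy--Schwarz splitting of $|u|^{2}$ on a ball followed by the Hardy inequality at $p=2$, and the Euclidean corollary is obtained verbatim by substituting \eqref{euhar} and \eqref{eufhar}, whose hypotheses at $p=2$ match the stated parameter ranges just as you checked. You are also right that the argument naturally yields the squared norm $\left\|u\right\|^{2}_{M^{\lambda}_{2}(\R)}$ on the left-hand side, consistent with the paper's own proof of Corollary \ref{UP1}.
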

\begin{proof}
 By applying a pointwise estimate from \cite[Lemma 6.26]{CF12} and \cite[Formula (18), p.125]{Sbook}, we have
    \begin{equation*}
        |u(x)|\leq C\int_{\mathbb{R}^{N}}\frac{|\nabla u(y)|}{|x-y|^{N-1}_{E}}dy.
    \end{equation*}
    By combining this with \eqref{stein-weiss}, we get
    \begin{equation*}
        \begin{split}
            \left\|\frac{u}{|x|^{\beta}_{E}}\right\|_{M^{\lambda}_{p}(\mathbb{R}^{N})}\leq C\left\|\frac{I_{1}(|\nabla u|)}{|x|^{\beta}_{E}}\right\|_{M^{\lambda}_{p}(\mathbb{R}^{N})}\stackrel{\eqref{stein-weiss}}\leq C\||\cdot|^{\alpha}\nabla u\|_{M^{\lambda}_{p}(\mathbb{R}^{N})}.
        \end{split}
    \end{equation*}

To prove the fractional version of the Hardy inequality, by combining the fact $$ I_{\gamma} \left[(-\Delta)^{\frac{\gamma}{2}}f\right](x)=f(x),$$ and Remark \ref{remeuc} of Theorem \ref{stein-weissthm}, we obtain \eqref{efhar}.
\end{proof}
Now for (global) Morrey spaces we state the fractional Hardy-Sobolev inequality on $\mathbb{R}^N$.
\begin{thm}[Integer and fractional Hardy-Sobolev inequalities]\label{Hardy-Sob}
 Let  $\alpha,\beta\in \mathbb{R}$, $0\leq \alpha+\beta\leq 1 <N$, $1<p<\frac{N}{1-\alpha-\beta}$, $\frac{1}{q}=\frac{1}{p}-\frac{1-\alpha-\beta}{N-\lambda}$,  $\alpha<\frac{N}{p'}$, $\beta<\frac{N-\lambda}{q}$ and $0<\lambda<\min\{N-\beta p, N-(1-\alpha-\beta)p\}$. Then, for all  $|\cdot|^{\alpha}u\in M^{\lambda}_{p}(\R)$, we have
\begin{equation}\label{CHSE}
\left\|\frac{u}{|\cdot|_{E}^{\beta}}\right\|_{M^{\lambda}_{q}(\R)}\leq C \||\cdot|_{E}^{\alpha}\nabla u\|_{M^{\lambda}_{p}(\R)},
\end{equation}
where $C$ is a positive constant independent of $u$.

Moreover, when $\alpha=0$, we have the classical Hardy-Sobolev inequality

\begin{equation}\label{CHSE1}
\left\|\frac{u}{|\cdot|_{E}^{\beta}}\right\|_{M^{\lambda}_{q}(\R)}\leq C \|\nabla u\|_{M^{\lambda}_{p}(\R)}.
\end{equation}

Let $\gamma\in(0,1)$, $0\leq\alpha+\beta\leq\gamma<N$ and $1<p<\frac{N}{\gamma-\alpha-\beta}$ such that  and $0<\lambda<\min\{N-\beta p, N-(\gamma-\alpha-\beta)p\}$. If $\frac{1}{q}=\frac{1}{p}-\frac{\gamma-\alpha-\beta}{N-\lambda}$, then, we have
\begin{equation}\label{hardysobin}
\left\||\cdot|_{E}^{-\beta}u\right\|_{M^{\lambda}_{q}(\mathbb{R}^{N})}\leq C\||\cdot|^{\alpha}_{E}(-\Delta )^{\frac{\gamma}{2}}u\|_{M^{\lambda}_{p}(\mathbb{R}^{N})},
\end{equation}
where $C$ is a positive constant independent of $f$.

Moreover, when $\alpha=0$, that is, if $0\leq\beta\leq\gamma<N$ and $1<p<\frac{N}{\gamma-\beta}$, $0<\lambda<\min\{N-\beta p, N-(\gamma-\beta)p\}$ such that $\frac{1}{q}=\frac{1}{p}-\frac{\gamma-\beta}{N-\lambda}$, we have the fractional Hardy-Sobolev inequality
\begin{equation}\label{hardysobina}
\left\||\cdot|_{E}^{-\beta}u\right\|_{M^{\lambda}_{q}(\mathbb{R}^{N})}\leq C\|(-\Delta )^{\frac{\gamma}{2}}u\|_{M^{\lambda}_{p}(\mathbb{R}^{N})}.
\end{equation}
\end{thm}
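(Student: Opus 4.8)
The plan is to reduce each of the four inequalities to the Adams type Stein-Weiss inequality of Theorem~\ref{stein-weissthm}, specialised to the Euclidean group as in Remark~\ref{remeuc} (so that $Q=N$ and $|\cdot|=|\cdot|_{E}$), after first dominating $u$ pointwise by a Riesz potential of its (fractional) gradient. For the integer inequalities \eqref{CHSE} and \eqref{CHSE1}, I would start from the standard representation formula on $\R$, which yields the pointwise bound $|u(x)|\leq C\int_{\R}\frac{|\nabla u(y)|}{|x-y|_{E}^{N-1}}dy=C\,I_{1}(|\nabla u|)(x)$ (the same estimate used above via \cite[Lemma 6.26]{CF12} and \cite[Formula (18), p.125]{Sbook}). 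Multiplying by $|\cdot|_{E}^{-\beta}$ and applying Theorem~\ref{stein-weissthm} with $\gamma=1$ gives $\||\cdot|_{E}^{-\beta}u\|_{M^{\lambda}_{q}(\R)}\leq C\||\cdot|_{E}^{-\beta}I_{1}(|\nabla u|)\|_{M^{\lambda}_{q}(\R)}\leq C\||\cdot|_{E}^{\alpha}\nabla u\|_{M^{\lambda}_{p}(\R)}$, which is exactly \eqref{CHSE}; setting $\alpha=0$ produces \eqref{CHSE1}.

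For the fractional inequalities \eqref{hardysobin} and \eqref{hardysobina}, I would use the inversion identity $I_{\gamma}\big[(-\Delta)^{\frac{\gamma}{2}}f\big]=f$, so that $u=I_{\gamma}[(-\Delta)^{\frac{\gamma}{2}}u]$. Since the Riesz kernel $|x-y|_{E}^{\gamma-N}$ is positive for $0<\gamma<N$, the absolute value passes inside the integral to give $|u(x)|\leq C\,I_{\gamma}(|(-\Delta)^{\frac{\gamma}{2}}u|)(x)$. Applying Theorem~\ref{stein-weissthm} with the prescribed $\gamma\in(0,1)$ and weights $\alpha,\beta$ then yields \eqref{hardysobin}, and the choice $\alpha=0$ gives \eqref{hardysobina}. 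In all four cases the reduction is structurally identical to the proofs of Theorem~\ref{HIG} and the integer Hardy part of Theorem~\ref{Hardy}.

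The remaining work is bookkeeping: one must check that the parameters listed in each part lie in the admissible range of Theorem~\ref{stein-weissthm}, namely $0\leq\alpha+\beta\leq\gamma<N$, $1<p<\frac{N}{\gamma-\alpha-\beta}$, $\frac{1}{q}=\frac{1}{p}-\frac{\gamma-\alpha-\beta}{N-\lambda}$, $\alpha<\frac{N}{p'}$, $\beta<\frac{N-\lambda}{q}$ and $0<\lambda<N-(\gamma-\alpha-\beta)p$ (with $\gamma=1$ in the integer case). These are precisely the hypotheses imposed here, so the verification is routine except at the borderline configuration $\alpha+\beta=\gamma$ (in particular $\alpha+\beta=1$ for the integer case), which I expect to be the main obstacle. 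There $\gamma-\alpha-\beta=0$ forces $q=p$, and the constraint $\beta<\frac{N-\lambda}{q}$ collapses to $\lambda<N-\beta p$; this explains the appearance of $N-\beta p$ in the $\min$ defining the admissible $\lambda$, and it is exactly this endpoint that must be handled by the second part of the proof of Theorem~\ref{stein-weissthm} (the case $\gamma=\alpha+\beta$, via the Young inequality argument) rather than by its generic $q>p$ branch.
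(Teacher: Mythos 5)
Your proposal is correct and coincides with the paper's own (very terse) proof: the paper simply says the argument is the same as for Theorems \ref{HIG}/\ref{Hardy}, namely the pointwise bound $|u|\leq C\,I_{1}(|\nabla u|)$ for the integer case, the inversion $u=I_{\gamma}\bigl[(-\Delta)^{\frac{\gamma}{2}}u\bigr]$ for the fractional case, and then Theorem \ref{stein-weissthm} in the Euclidean setting of Remark \ref{remeuc}. Your closing remark about the borderline $\alpha+\beta=\gamma$ forcing $q=p$ and being absorbed by the $\gamma=\alpha+\beta$ branch (Step 3, Young's inequality) of the proof of Theorem \ref{stein-weissthm} is exactly the right reading of why $N-\beta p$ appears in the $\min$ condition.
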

\begin{rem}
In \eqref{hardysobina}, we have the fractional Hardy inequality if $\beta=\gamma$ and  we get the fractional Sobolev inequality if $\beta=0$. Also, 
when $\lambda=0$, we obtain the standard (classical) fractional Hardy-Sobolev inequality.
\end{rem}
\begin{proof}
The proof of this theorem is similar to the one in the previous section.
\end{proof}
Similarly, we present the fractional Rellich inequality.
\begin{thm}[Integer and fractional Rellich inequalities]\label{Rell}
    Let  $1<p<\infty$, $\alpha,\beta\in \mathbb{R}$, $\alpha+\beta=2$,  $\alpha<\frac{N}{p'}$, $\beta<\frac{N-\lambda}{p}$ and $0<\lambda<\min\{N,N-\beta p\}$. Then  we have
\begin{equation}\label{RI}
\left\|\frac{u}{|\cdot|_{E}^{\beta}}\right\|_{M^{\lambda}_{p}(\R)}\leq C \||\cdot|_{E}^{\alpha}\Delta u\|_{M^{\lambda}_{p}(\R)}.
\end{equation}
Moreover, with $\alpha=0$, we establish
\begin{equation}\label{RIr}
\left\|\frac{u}{|\cdot|_{E}^{2}}\right\|_{M^{\lambda}_{p}(\R)}\leq C \|\Delta u\|_{M^{\lambda}_{p}(\R)},\,\,\,p>1,\,\, N>2p,\,\, 0<\lambda<N-2p.
\end{equation}

Also, let   $p>1$, $\alpha<\frac{N}{p'}$, $\beta<\frac{N-\lambda}{p}$, $\alpha+\beta=\gamma\in(1,2)$, $N>\gamma p$ and   $0<\lambda<\min\{N,N-\gamma p\}$. Then,  we have
\begin{equation}\label{wrellin}
\left\||\cdot|_{E}^{-\beta}u\right\|_{M^{\lambda}_{p}(\mathbb{R}^{N})}\leq C\||\cdot|_{E}^{-\alpha}(-\Delta )^{\frac{\gamma}{2}}u\|_{M^{\lambda}_{p}(\mathbb{R}^{N})}.
\end{equation}
In addition, if $\alpha=0$, then we have
\begin{equation}\label{urellin}
\left\||\cdot|_{E}^{-\gamma}u\right\|_{M^{\lambda}_{p}(\mathbb{R}^{N})}\leq C\|(-\Delta )^{\frac{\gamma}{2}}u\|_{M^{\lambda}_{p}(\mathbb{R}^{N})},\,\,\,\,\gamma\in(1,2),\,\,p>1,\,\,N>\gamma p,\,\,0<\lambda< N-\gamma p.
\end{equation}
\end{thm}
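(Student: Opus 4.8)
The plan is to reduce each of the four inequalities in Theorem \ref{Rell} to the Euclidean case of the Stein-Weiss-Adams inequality, Theorem \ref{stein-weissthm} (which is valid on $\R$ by Remark \ref{remeuc}), by representing $u$ pointwise as a Riesz potential of the relevant differential operator applied to $u$. This is exactly the scheme already carried out for the stratified Rellich inequality in Theorem \ref{RG} and for the Hardy inequality in Theorem \ref{HIG}, so once the two representation formulas are in place the argument is structurally short.

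First I would treat the integer inequality \eqref{RI}. Here $N>2$ (indeed $\alpha<\tfrac{N}{p'}$ and $\beta<\tfrac{N-\lambda}{p}<\tfrac{N}{p}$ force $2=\alpha+\beta<\tfrac{N}{p'}+\tfrac{N}{p}=N$), so the fundamental solution of $-\Delta$ on $\R$ is $c_N|x|_E^{2-N}$, giving $u(x)=c_N\int_{\R}|x-y|_E^{2-N}(-\Delta u)(y)\,dy$ and hence the pointwise bound $|u(x)|\leq C\,I_{2}(|\Delta u|)(x)$, where $I_2$ has kernel $|x-y|_E^{2-N}$. Taking $\gamma=2=\alpha+\beta$ in Theorem \ref{stein-weissthm} makes $\gamma-\alpha-\beta=0$, hence $q=p$, and the remaining hypotheses $\alpha<\tfrac{N}{p'}$, $\beta<\tfrac{N-\lambda}{q}=\tfrac{N-\lambda}{p}$ and $0<\lambda<N-(\gamma-\alpha-\beta)p=N$ reduce precisely to the stated conditions $0<\lambda<\min\{N,N-\beta p\}$. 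Applying \eqref{stein-weiss} to the pointwise bound then yields $\||\cdot|_E^{-\beta}u\|_{M^\lambda_p(\R)}\leq C\||\cdot|_E^{-\beta}I_2(|\Delta u|)\|_{M^\lambda_p(\R)}\leq C\||\cdot|_E^{\alpha}\Delta u\|_{M^\lambda_p(\R)}$, which is \eqref{RI}; the choice $\alpha=0$, $\beta=2$ specialises to \eqref{RIr}.

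For the fractional inequalities \eqref{wrellin} and \eqref{urellin} with $\gamma\in(1,2)$ I would invoke the inversion identity $I_\gamma\big[(-\Delta)^{\gamma/2}u\big]=u$ already used in Section \ref{SEC:2} and in the fractional Hardy inequality, which rests on the symbol $|\xi|^\gamma$ of $(-\Delta)^{\gamma/2}$ matched against the symbol $c|\xi|^{-\gamma}$ of the Riesz potential $I_\gamma$. Since the Riesz kernel is positive this produces the pointwise bound $|u(x)|\leq C\,I_\gamma\big(|(-\Delta)^{\gamma/2}u|\big)(x)$, after which Theorem \ref{stein-weissthm} applied with this $\gamma$, with $\alpha+\beta=\gamma$ (so again $q=p$) and the admissible weights delivers \eqref{wrellin}; the case $\alpha=0$ gives \eqref{urellin}.

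The main obstacle I expect is not the Morrey-space estimate itself, which is black-boxed by Theorem \ref{stein-weissthm}, but the rigorous justification of the two representation formulas and of the identity $I_\gamma(-\Delta)^{\gamma/2}=\mathrm{id}$ on a class of functions dense enough to transfer the conclusion to all $u$ with $|\cdot|_E^{\alpha}(-\Delta)^{\gamma/2}u\in M^\lambda_p(\R)$; in particular one must ensure the Riesz potentials converge absolutely and control the decay at infinity. A secondary point requiring care is that $\alpha+\beta=\gamma$ forces the endpoint $q=p$, so one is using exactly the $p=q$ branch of Theorem \ref{stein-weissthm} (the branch established via Young's convolution inequality in Step 3 of its proof), and the hypotheses of Theorem \ref{Rell} must be verified to sit inside the admissible region of Theorem \ref{stein-weissthm} — most notably $1<p<\tfrac{N}{\gamma-\alpha-\beta}=\infty$ and $0<\lambda<N-(\gamma-\alpha-\beta)p=N$, both of which the stated constraints guarantee.
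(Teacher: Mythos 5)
Your proposal follows essentially the same route as the paper: the paper proves its Rellich-type inequalities exactly by the pointwise representation $|u(x)|\leq C\,I_{2}(|\Delta u|)(x)$ via the fundamental solution (as in its proof of Theorem \ref{RG}) and, for the fractional case, by the inversion identity $I_{\gamma}\left[(-\Delta)^{\frac{\gamma}{2}}u\right]=u$ (as in its proof of Theorem \ref{Hardy}), followed by Theorem \ref{stein-weissthm} with $\alpha+\beta=\gamma$, hence $q=p$. Your verification that the hypotheses of Theorem \ref{Rell} sit inside the admissible region of Theorem \ref{stein-weissthm} (including the automatic bound $2=\alpha+\beta<N$) is consistent with, and slightly more explicit than, what the paper records.
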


\begin{rem}
With  $p=2$, we get the classical Rellich inequality on the Morrey space
\begin{equation}
 \left\||\cdot|^{-2}f\right\|_{M^{\lambda}_{2}(\mathbb{R}^{N})}\leq C\|\Delta f\|_{M^{\lambda}_{2}(\mathbb{R}^{N})},\,\,\,\,\text{for}\,\,0<\lambda<N-4,
\end{equation}
and
\begin{equation}
 \left\||\cdot|^{-\gamma}u\right\|_{M^{\lambda}_{2}(\mathbb{R}^{N})}\leq C\|(-\Delta )^{\frac{\gamma}{2}}u\|_{M^{\lambda}_{2}(\mathbb{R}^{N})},\,\,\,\,\text{for}\,\,\,\gamma\in(1,2),\,\,0<\lambda<N-2\gamma.
\end{equation}
This is a Morrey space extension of the classical Rellich inequality (with $\lambda=0$):
\begin{equation}
 \left\||\cdot|^{-2}u\right\|_{L^{2}(\mathbb{R}^{N})}\leq C\|\Delta u\|_{L^{2}(\mathbb{R}^{N})},\,\,\,\,N\geq5,
\end{equation}
and
\begin{equation}
 \left\||\cdot|^{-\gamma}u\right\|_{L^{2}(\mathbb{R}^{N})}\leq C\|(-\Delta )^{\frac{\gamma}{2}}u\|_{L^{2}(\mathbb{R}^{N})},\,\,\,\,\text{for}\,\,N>2\gamma,\,\,\,\gamma\in(1,2).
\end{equation}
\end{rem}
Finally, we present the fractional Gagliardo-Nirenberg inequality for global Morrey spaces.
\begin{thm}[Integer and fractional Gagliardo-Nirenberg inequalities]
Let  $1<p<N$ and $0<\lambda<N-p$. Assume that $\frac{1}{q}=a\left(\frac{1}{p}-\frac{1}{N-\lambda}\right)+
\frac{1-a}{r}$, $a\in[0,1]$, $q>1$ and $r\geq1$, then we have
\begin{equation}\label{GNir}
\left\|u\right\|_{M^{\lambda}_{q}(\R)}\leq C\|\nabla u\|^{a}_{M^{\lambda}_{p}(\G)}\|u\|^{1-a}_{M^{\lambda}_{r}(\R)},
\end{equation}
where $C$ is a positive constant independent of $u$.

Let $\gamma\in(0,1),$ $1<p<\frac{N}{\gamma}$ and $0<\lambda<N-\gamma p$. Assume that $\frac{1}{q}=a\left(\frac{1}{p}-\frac{\gamma}{N-\lambda}\right)+
\frac{1-a}{r}$, $a\in[0,1]$, $q>1$ and $r\geq1$, then we have
\begin{equation}\label{GN}
\left\|u\right\|_{M^{\lambda}_{q}(\G)}\leq C\|(-\Delta )^{\frac{\gamma}{2}}u\|^{a}_{M^{\lambda}_{p}(\mathbb{R}^{N})}\|u\|^{1-a}_{M^{\lambda}_{r}(\mathbb{R}^{N})},
\end{equation}
where $C$ is a positive constant independent of $u$.
\end{thm}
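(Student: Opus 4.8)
The plan is to treat both the integer inequality \eqref{GNir} and the fractional inequality \eqref{GN} by the same H\"older interpolation scheme already used for Theorem \ref{GNG}, the only difference being which Sobolev-type endpoint estimate is invoked. First I would record the relevant Sobolev inequality. For the fractional case, setting $\beta=0$ in the fractional Hardy-Sobolev inequality \eqref{hardysobina} gives
\begin{equation*}
\|u\|_{M^{\lambda}_{q^{*}}(\mathbb{R}^{N})}\leq C\|(-\Delta)^{\frac{\gamma}{2}}u\|_{M^{\lambda}_{p}(\mathbb{R}^{N})},\qquad \frac{1}{q^{*}}=\frac{1}{p}-\frac{\gamma}{N-\lambda},
\end{equation*}
so that $q^{*}=\frac{(N-\lambda)p}{N-\lambda-\gamma p}$ is the critical fractional Sobolev exponent; the hypotheses $1<p<\frac{N}{\gamma}$ and $0<\lambda<N-\gamma p$ guarantee that $q^{*}$ is finite and positive. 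For the integer case \eqref{GNir} the analogous endpoint is the Sobolev inequality obtained from \eqref{CHSE} with $\alpha=\beta=0$, namely $\|u\|_{M^{\lambda}_{q^{*}}(\mathbb{R}^{N})}\leq C\|\nabla u\|_{M^{\lambda}_{p}(\mathbb{R}^{N})}$ with $\frac{1}{q^{*}}=\frac{1}{p}-\frac{1}{N-\lambda}$.

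Next I would interpolate. The exponent relation $\frac{1}{q}=\frac{a}{q^{*}}+\frac{1-a}{r}$ (the stated hypothesis rewritten via $\frac{1}{q^{*}}$) shows that the pair $\big(\frac{q^{*}}{aq},\frac{r}{(1-a)q}\big)$ is H\"older-conjugate, since
\begin{equation*}
\frac{aq}{q^{*}}+\frac{(1-a)q}{r}=q\left(\frac{a}{q^{*}}+\frac{1-a}{r}\right)=1.
\end{equation*}
Splitting $|u|^{q}=|u|^{aq}|u|^{(1-a)q}$ and applying H\"older's inequality on $B(x,R)$ with this conjugate pair yields
\begin{equation*}
\int_{B(x,R)}|u(y)|^{q}dy\leq\left(\int_{B(x,R)}|u(y)|^{q^{*}}dy\right)^{\frac{aq}{q^{*}}}\left(\int_{B(x,R)}|u(y)|^{r}dy\right)^{\frac{(1-a)q}{r}}.
\end{equation*}

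Then I would normalise by inserting $R^{\lambda}/R^{\lambda}$ inside each factor exactly as in the proof of Theorem \ref{GNG}; since the two resulting powers of $R^{\lambda}$ add up to $R^{\lambda(\frac{aq}{q^{*}}+\frac{(1-a)q}{r})}=R^{\lambda}$, dividing by $R^{\lambda}$ and taking the supremum over $x\in\mathbb{R}^{N}$ and $R>0$ produces
\begin{equation*}
\|u\|^{q}_{M^{\lambda}_{q}(\mathbb{R}^{N})}\leq\|u\|^{aq}_{M^{\lambda}_{q^{*}}(\mathbb{R}^{N})}\|u\|^{(1-a)q}_{M^{\lambda}_{r}(\mathbb{R}^{N})}.
\end{equation*}
Inserting the fractional (resp. integer) Sobolev inequality for the factor $\|u\|_{M^{\lambda}_{q^{*}}(\mathbb{R}^{N})}$ and taking the $q$-th root gives \eqref{GN} (resp. \eqref{GNir}). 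Since the argument is a direct combination of H\"older's inequality with an already-established Sobolev endpoint, I do not anticipate a genuine obstacle; the only point requiring care is the exponent bookkeeping, in particular checking that the H\"older exponents are admissible for all $a\in[0,1]$ — the degenerate values $a=0$ and $a=1$ reduce respectively to the trivial identity with $q=r$ and to the pure Sobolev inequality — and that the standing hypotheses keep $q^{*}$ in the range where the Sobolev inequality applies.
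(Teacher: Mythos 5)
Your proposal is correct and follows essentially the same route as the paper: the authors prove the stratified-group version (Theorem \ref{GNG}) by exactly this H\"older splitting $|u|^{q}=|u|^{aq}|u|^{(1-a)q}$ on a ball, the $r^{\lambda}/r^{\lambda}$ normalisation, and the endpoint Sobolev inequality obtained from \eqref{CHS} with $\alpha=\beta=0$, and the Euclidean integer and fractional cases are then asserted to follow in the same way, with \eqref{hardysobina} ($\beta=0$) supplying the fractional endpoint just as you do. Your exponent bookkeeping (the conjugacy $\frac{aq}{q^{*}}+\frac{(1-a)q}{r}=1$) and the hypothesis checks for the endpoint inequalities match the paper's argument, so there is nothing to correct.
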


\section{Acknowledgments}
The second author would like to thank the Faculty of Fundamental Science, Industrial University of Ho Chi Minh City, Vietnam, for the opportunity to work in it.
This research was funded by the Committee of Science of the Ministry of Science and Higher Education of the Republic of Kazakhstan (Grant No. AP19674900) and
Nazarbayev University Program 20122022CRP1601. The authors were also supported in part by the FWO Odysseus Project G.0H94.18N, the Methusalem programme of the Ghent University Special Research Fund (BOF) (Grant number 01M01021), and EPSRC Grant EP/R003025.

\end{document}